\documentclass{article}
\usepackage{mathrsfs}
\usepackage[utf8]{inputenc}
\usepackage[letterpaper,margin=1.0in]{geometry}

\usepackage{amsmath}\allowdisplaybreaks
\usepackage{amsfonts,bm}
\usepackage{amssymb}









\def\eqref#1{equation~(\ref{#1})}









\def\1{\bf{1}}

\newcommand{\Norm}[1]{\left\| #1 \right\|}

\def\vzero{{\bf{0}}}
\def\vone{{\bf{1}}}

\def\va{{\bf{a}}}
\def\vb{{\bf{b}}}

\def\vg{{\bf{g}}}

\def\vv{{\bf{v}}}
\def\vw{{\bf{w}}}
\def\vx{{\bf{x}}}
\def\vy{{\bf{y}}}


\def\fD{{\mathcal{D}}}
\def\fE{{\mathcal{E}}}

\def\fG{{\mathcal{G}}}

\def\fO{{\mathcal{O}}}

\def\fV{{\mathcal{V}}}


\def\BE{{\mathbb{E}}}

\def\BN{{\mathbb{N}}}

\def\BR{{\mathbb{R}}}

\def\mA {{\bf A}}

\def\mD {{\bf D}}

\def\mF {{\bf F}}
\def\mG {{\bf G}}

\def\mI {{\bf I}}

\def\mU {{\bf U}}
\def\mV {{\bf V}}

\def\mX {{\bf X}}
\def\mY {{\bf Y}}
\def\mZ {{\bf Z}}








\usepackage{amsthm}
\theoremstyle{plain}
\newtheorem{thm}{Theorem}

\newtheorem{lem}{Lemma}
\newtheorem{asm}{Assumption}
\newtheorem{remark}{Remark}

\newtheorem{prop}{Proposition}

\makeatletter
\def\Ddots{\mathinner{\mkern1mu\raise\p@
\vbox{\kern7\p@\hbox{.}}\mkern2mu
\raise4\p@\hbox{.}\mkern2mu\raise7\p@\hbox{.}\mkern1mu}}
\makeatother

\makeatletter
\newcommand*{\rom}[1]{\expandafter\@slowromancap\romannumeral #1@}
\makeatother

\def\bx{{\bar\vx}}

\def\vxi{{\bm\xi}}

\def\AG{{\rm AccGossip}}
\def\vpi{{\bm\pi}}
\def\vdelta{{\bm\delta}}
\def\mPi{{\bf\Pi}}

\newcommand{\pushsum}{{\sc Push-Sum}\xspace}
\newcommand{\pushpull}{{\sc Push-Pull}\xspace}
\newcommand{\pulldiag}{{\sc Pull-Diag}\xspace}

\usepackage{color}
\usepackage{hyperref}
\usepackage{enumerate}
\usepackage{enumitem}
\setlength{\parindent}{0pt}
\usepackage{algorithm}
\usepackage{algorithmic}
\usepackage[numbers,sort,compress]{natbib}
\usepackage{url}
\setlength\parindent{0pt}
\setlength\parskip{4pt}
\usepackage{microtype}
\usepackage{algorithm}
\usepackage{algorithmic}
\usepackage{hyperref}
\usepackage{url}
\usepackage{microtype}
\usepackage{enumitem}
\usepackage{graphicx}
\usepackage{xspace}
\usepackage{booktabs} 
\usepackage{makecell}

\title{\vskip-0.5cm Near-Optimal Decentralized Stochastic Nonconvex Optimization \\ with Heavy-Tailed Noise}
\date{}
\author{Menglian Wang \qquad\qquad Zhuanghua Liu \qquad\qquad Luo Luo}

\begin{document}

\maketitle

\begin{abstract}
This paper studies decentralized stochastic nonconvex optimization problem over row-stochastic networks.
We consider the heavy-tailed gradient noise which is empirically observed in many popular real-world applications.
Specifically, we propose a decentralized normalized stochastic gradient descent with {\sc Pull-Diag} gradient tracking, 
which achieves approximate stationary points with the optimal sample complexity and the near-optimal communication complexity.
We further follow our framework to study the setting of undirected networks, 
also achieving the nearly tight upper complexity bounds.
Moreover, we conduct empirical studies to show the practical superiority of the proposed methods.
\end{abstract}

\section{Introduction}
This paper studies the decentralized stochastic nonconvex optimization problem 
\begin{align}\label{prob:main}
\min_{\vx \in \mathbb{R}^{d}} f(\vx):=\frac{1}{n} \sum_{i=1}^{n}f_{i}(\vx)
\end{align}
over a connected network with $n$ agents, where $f_i:\BR^d\to\BR$ is the smooth local function on the $i$th agent with the form
\begin{align*}
f_{i}(\vx):=\mathbb{E}_{\vxi_{i} \sim \mathcal{D}_{i}}[F(\vx; \vxi_{i})].
\end{align*}

We focus on the scenario that the local data $\{\vxi_{i}\}_{i=1}^n$ exhibits the heterogeneous distributions $\{\fD_i\}_{i=1}^n$ and all  $n$ agents are connected by the topology of the directed graph \citep{song2024provably,yuan2021decentlam,assran2019stochastic,bottou2018optimization}. 
Specifically, we consider local stochastic gradients with the heavy-tailed noise \citep{simsekli2019tailindexanalysisstochasticgradient,zhang2020adaptive,gurbuzbalaban2025heavy}, which is more general than the bounded variance assumption.

Decentralized stochastic nonconvex optimization is widely employed for training large-scale machine learning models.
However, most of the existing analyses are based on the assumption of bounded variance  
\citep{yuan2022revisiting,koloskova2021improved,tang2018d,lu2021optimal,koloskova2020unified,lian2017can},
which has limitations in machine learning applications.
For example, the stochastic gradient in the centralized training for language model  \cite{laurent2024linear,battash2024revisiting,zhang2020adaptive,ahn2023linear}, and reinforcement learning \cite{garg2021proximal,cayci2023provably,zhu2024robust} typically exhibits heavy-tailed noise, rather than bounded variance.
Recently, the heavy-tailed gradient noise has also been observed in decentralized training \citep{gurbuzbalaban2025heavy}.
Later, \citet{yu2025decentralized} proposed normalized stochastic gradient descent with momentum and gradient tracking for decentralized nonconvex optimization with heavy-tailed noise, while their results are restricted to undirected graphs. 

In real-world applications, the bidirectional communication protocol in a directed network may be difficult to implement because of the issues of nonuniform communication powers \citep{yang2019survey} and connectivity failures \citep{yemini2022robust}.
This motivates us to study decentralized optimization associated problems with the topology of directed graphs. 
Rather than doubly-stochastic mixing matrices for the undirected graph \cite{scaman2017optimal,sun2019distributed,kovalev2021lower,lu2021optimal,yuan2022revisiting}, the communication in the directed graph is typically characterized by the column-stochastic or row-stochastic mixing matrices, leading to the efficient gossip protocols such as \pushsum \cite{nedic2017achieving,nedic2014distributed,tsianos2012push,zeng2017extrapush,xi2017dextra,xi2017add,assran2019stochastic,qureshi2020s,song2024provably,kempe2003gossip,kungurtsev2023decentralized}, \pushpull \cite{pu2020push,Xin_2018,xin2019distributed,xin2019distributed2,qu2019accelerated}, and \pulldiag \cite{mai2019distributed,xin2019frost,ghaderyan2023fast,lu2020nesterov,xi2018linear}.
Based on these techniques,
the recent works \citep{liang2025rowstochastic,liang2025understanding} establish the efficient stochastic first-order methods that achieve the optimal sample complexity for decentralized nonconvex optimization over directed graphs, while their results cannot address the heavy-tailed noise.
\begin{table}[t]
    \centering
    \caption{We compare the \emph{overall sample complexity} and the \emph{communication complexity} of existing decentralized stochastic first-order methods with our results, where we use the notations $\tilde{\mathcal{O}}(\cdot)$ and $\tilde{\Omega}(\cdot)$ to hide logarithmic factors, and $1-\beta$ denotes the (generalized) spectral gap of the mixing matrix. The expressions for the complexity of \citet{yu2025decentralized} are complicated, and a detailed discussion is deferred to Appendix~\ref{appendix:compare}.}
    \label{table:comparsion} \vskip0.2cm
    
    \resizebox{\columnwidth}{!}{%
    \begin{tabular}{ccccc}
    \hline 
    Methods  &  Sample Complexity & Communication Complexity & Networks & Noise \\ \hline\hline\addlinespace

    \makecell{MG-\pulldiag-GT \\ {\footnotesize\citep{liang2025rowstochastic}}}  
    & $\mathcal{O}\!\left(\dfrac{L\sigma^{2}\Delta}{\epsilon^{4}}+\dfrac{L\Delta}{(1-\beta)\epsilon^{2}}\right)$ 
    & $\tilde{\mathcal{O}}\!\left(\dfrac{L\sigma^{2}\Delta}{\epsilon^{4}}+\dfrac{L\Delta}{(1-\beta)\epsilon^{2}}\right)$ 
    & row-stochastic & $p=2$ \\\addlinespace

    \makecell{DeTAG \\ {\footnotesize\citep{lu2021optimal}}} 
    & $\mathcal{O}\!\left(\dfrac{L\sigma^{2}\Delta}{\epsilon^{4}}\right)$ 
    & $\tilde{\mathcal{O}}\!\left(\dfrac{L\Delta}{(1-\beta)^{\frac12}\epsilon^{2}}\right)$ 
    & doubly-stochastic & $p=2$ \\\addlinespace    

    \hline \addlinespace

    \makecell{DNSGD-PD \\ {\footnotesize(Theorem~\ref{thm:main})}}   
    & $\mathcal{O}\!\left(\dfrac{L\sigma^{\frac{p}{p-1}}\Delta}{\epsilon^{\frac{3p-2}{p-1}}}\right)$ 
    & $\tilde{\mathcal{O}}\!\left(\dfrac{L\Delta}{(1-\beta)\epsilon^{2}}\right)$
    & row-stochastic & $p\in(1,2]$ \\ \addlinespace

    \makecell{DNSGD \\ {\footnotesize(Theorem~\ref{thm:main2})}}   
    & $\mathcal{O}\!\left(\dfrac{L\sigma^{\frac{p}{p-1}}\Delta}{\epsilon^{\frac{3p-2}{p-1}}}\right)$ 
    & $\tilde{\mathcal{O}}\!\left(\dfrac{L\Delta}{(1-\beta)^{\frac12}\epsilon^{2}}\right)$ 
    & doubly-stochastic & $p\in(1,2]$ \\ \addlinespace\hline  

    \addlinespace    
    \makecell{Lower Bounds \\ {\footnotesize\cite{liang2025rowstochastic,liu2024nonconvex}}}
    & $\Omega\!\left(\dfrac{L\sigma^{\frac{p}{p-1}}\Delta}{\epsilon^{\frac{3p-2}{p-1}}}\right)$ 
    & $\tilde{\Omega}\!\left(\dfrac{L\Delta}{(1-\beta)\epsilon^{2}}\right)$
    & row-stochastic & $p\in(1,2]$ \\ \addlinespace    

    \makecell{Lower Bounds \\ {\footnotesize\cite{liu2024nonconvex,lu2021optimal,yuan2022revisiting}}}
    & $\Omega\!\left(\dfrac{L\sigma^{\frac{p}{p-1}}\Delta}{\epsilon^{\frac{3p-2}{p-1}}}\right)$ 
    & $\Omega\!\left(\dfrac{L\Delta}{(1-\beta)^{\frac12}\epsilon^{2}}\right)$
    & doubly-stochastic & $p\in(1,2]$ \\ \addlinespace\hline
    \end{tabular}}
\end{table}

In this paper, we study the decentralized stochastic nonconvex optimization with heavy-tailed noise, i.e., the noise of the local stochastic gradient satisfies the $p$-th bounded central moment ($p$-BCM) with $p\in(1,2]$. 
Specifically, we  propose a normalized stochastic gradient
descent with \pulldiag gradient tracking for finding 
the $\epsilon$-stationary points over the row-stochastic network.
Our method successfully captures the heavy-tailed local gradient noise 
to \emph{achieve the linear speedup with respect to the number of agents~$n$}, while the previous results of linear speedup only work for bounded variance setting, i.e., the special case of $p$-BCM with $p=2$ \cite{lu2021optimal,yuan2022revisiting,liang2025rowstochastic}.   
Concretely, we provide a theoretical analysis to show that the proposed method requires the overall sample complexity of~$\mathcal{O}(L\sigma^{\frac{p}{p-1}}\epsilon^{-\frac{3p-2}{p-1}}\Delta)$
and the communication complexity of
$\tilde\fO(L(1-\beta)^{-1}\epsilon^{-2}\Delta)$,
where~$L$ is the smoothness parameter of local functions, 
$\sigma$ is the noise level, 
$\Delta$ is the initial function value gap, 
and~$1-\beta$ is the generalized spectral gap of the mixing matrix for the row-stochastic network.
It is worth noting that \emph{both our sample complexity and communication complexity (nearly) match the corresponding lower bounds}  \cite{liang2025rowstochastic,liu2024nonconvex}.
For the specific bounded variance setting (i.e., $p=2$), 
the communication complexity of our method is even sharper than the result achieved by the best-known result \cite{liang2025rowstochastic,liang2025understanding}.
We additionally follow the framework of our algorithm and analysis to address the problem over the undirected network, also achieving the optimal sample complexity and the near-optimal communication complexity.

The comparison on the complexity bounds of our methods and existing results are summarized in Table~\ref{table:comparsion}.
We further conduct experiments on training the language models to demonstrate the superiority of proposed methods.

\section{Related Work}
Decentralized stochastic optimization has been widely used in training the large-scale machine learning models, which allows the agent only communicates with its neighbors during the iterations \citep{nedic2018network}.
Most of existing works focus on the assumption of bounded variance for local stochastic gradients \citep{yuan2022revisiting,lu2021optimal,lian2017can,koloskova2021improved,koloskova2020unified,tang2018d}.
For example, \citet{lu2021optimal} and \citet{yuan2022revisiting} combined 
the standard gradient tracking 
\citep{nedic2017achieving,qu2017harnessing,nedic2009distributed} 
with Chebyshev acceleration \citep{liu2011accelerated,arioli2014chebyshev} 
to achieve nearly tight upper bounds on the sample complexity and the communication complexity for undirected networks.
More recently, \citet{liang2025understanding,liang2025rowstochastic} introduced the extended gradient tracking technique such as PUSH-SUM \citep{nedic2017achieving,nedic2014distributed,tsianos2012push,zeng2017extrapush,xi2017dextra,xi2017add,assran2019stochastic,qureshi2020s,song2024provably,kempe2003gossip,kungurtsev2023decentralized}, and PULL-DIAG \cite{mai2019distributed,xin2019frost,ghaderyan2023fast,lu2020nesterov,xi2018linear} to attain the optimal sample complexity for decentralized stochastic nonconvex first-order optimization over directed networks, 
while  the communication complexity of these methods are not tight.

The heavy-tailed gradient noise is empirically observed in many popular machine learning applications \cite{laurent2024linear,battash2024revisiting,zhang2020adaptive,ahn2023linear,garg2021proximal,cayci2023provably,zhu2024robust}, also in the decentralized training \citep{gurbuzbalaban2025heavy}.
The algorithms and theory based on gradient clipping and normalization have been extensively studied to address the heavy-tailed noise in the centralized setting \citep{gorbunov2020stochastic,gorbunov2022clipped,sadiev2023high,gorbunov2024high,zhang2020adaptive,nguyen2023high,cutkosky2020momentum,liu2023stochastic,hubler2024gradient,liunonconvex2025,sun2024gradient,he2025complexity,liu2025stochastic}.
Recently, normalized gradient methods have been extended to decentralized optimization over the undirected network \cite{li2024problem,yu2025decentralized,luo2025decentralized}.
Notably, \citet{yu2025decentralized} proposed a momentum based normalized stochastic gradient with gradient tracking, which establishes the first convergence guarantee for decentralized optimization with heavy-tailed gradient noise, while the claimed optimality is only with respect to the target accuracy in the sample complexity.

\section{Preliminaries}\label{sec:pre}

In this section, we introduce the notations and formalize the problem setting throughout this paper.

\subsection{Notations}
We use bold lowercase letters with subscripts to denote local variables, e.g., the column vector $\vx_i\in \mathbb{R}^{d}$ presents the local variable on the $i$th agent and $\nabla f_i(\vx_i)\in \mathbb{R}^{d}$ is the corresponding local gradient. 
We also use bold uppercase letters to denote matrices that aggregates local variables or local gradients from all $n$ agents, e.g.,
\begin{align*}
\mX=\begin{bmatrix}
    \vx_{1}^\top \\ \vdots \\ \vx_{n}^\top
\end{bmatrix}
\qquad\text{and}\qquad
\nabla \mF(\mX)
=\begin{bmatrix}
    \nabla f_{1}(\vx_{1})^\top \\ \vdots \\ \nabla f_{n}(\vx_{n})^\top
\end{bmatrix}    
\in \mathbb{R}^{n \times d}.
\end{align*}

For the ease of presentation, we allow the input of the function and its gradient to be organized as either a column vector or a row vector. 

Furthermore, we use $\Norm{\cdot}$ to represent the Euclidean norm of vector and the Frobenius norm of the given matrix, and use $\|\cdot\|_2$ to represent the spectral norm of the given matrix. 
The notation $\operatorname{diag}(\cdot)$ denotes a diagonal matrix whose diagonal entries are given by the specified vector.
Similarly, the notation $\operatorname{Diag}(\cdot)$ denotes the diagonal matrix whose diagonal entries are given by the specified matrix of a square matrix.

For a given square matrix $\mA\in\BR^{n\times n}$,
the notation $\mA^m$ represents $\mA$ to the power of $m$.
Additionally, we use the notation $\mI\in\BR^{n\times n}$ to
present the identity matrix
and denote the all-ones vector as $\mathbf{1}=[1, \ldots, 1]^{\top} \in \mathbb{R}^{n}$. 

\subsection{Problem Setting}
We focus on the problem over the directed and strongly-connected network $\fG=(\fV,\fE)$, where the node set $\fV=[n]$ contains all $n$ agents 
and the edge set $\fE=\{(i\!\rightarrow\!j):\text{there exists a path leads from agent $i$ to agent $j$}\}$ describes the connectivity of the network.
The communication step is typically characterized by multiplying the aggregated variables by the mixing matrix.
Specifically, we suppose the mixing matrix associate with the network satisfies the following standard assumption \cite{mai2019distributed,xin2019frost,ghaderyan2023fast,lu2020nesterov,xi2018linear,liang2025rowstochastic}.

\begin{asm}\label{asm:primitive}
    The mixing matrix $\mA=[a_{ij}]\in\BR^{n\times n}$ associated with the network $\fG=(\fV,\fE)$ satisfies  $\mA \mathbf{1}=\mathbf{1}$ and the entries of $\mA$ holds $a_{ij} \in (0,1]$ if $(i \rightarrow j) \in \mathcal{E}$ or~$i=j$, and~$a_{ij}=0$ otherwise for all $i,j\in[n]$.
\end{asm}

For a given aggregated variable $\mX=[\vx_1^\top;\dots;\vx_n^\top]\in\BR^{n\times d}$, we describe $K$ communication steps associated with the mixing matrix $\mA\in\BR^{n\times n}$ by the initialization $\mZ_0=\mX$ and the recursion
\begin{align}\label{eq:Zk}
    \mZ_{k}=\mA\mZ_{k-1}= \dots =\mA^k\mZ_0=\mA^k\mX.
\end{align}

The row-stochastic matrix $\mA\in\BR^{n\times n}$ has the following properties \citep{oskar1907Zur}.

\begin{prop}[{\citet{oskar1907Zur}}]\label{prop:Perron-Frobenius}
Under Assumption \ref{asm:primitive}, there exists a unique equilibrium vector 
$\vpi\in \mathbb{R}^n$ with positive entries such that
\begin{align*}
   \vpi^{\top} \mA=\vpi^{\top}, \quad \vpi^{\top} \mathbf{1}=1,\quad \text{and} \quad \lim_{k \rightarrow +\infty} \mA^{k}=\mathbf{1}\vpi^{\top}. 
\end{align*}
\end{prop}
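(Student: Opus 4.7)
The plan is to invoke the Perron-Frobenius theorem for primitive nonnegative matrices: the three conclusions then read off from the spectral structure of $\mA$, so the real work is to establish primitivity and simplicity of the leading eigenvalue $\lambda=1$.

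The first step is to check that some power of $\mA$ is entrywise positive. Because $\fG$ is strongly connected, for every ordered pair $(i,j)\in[n]^2$ there is a directed path from $i$ to $j$ of some length $\ell\le n-1$, contributing a strictly positive product $a_{ik_1}a_{k_1k_2}\cdots a_{k_{\ell-1}j}$ to $(\mA^\ell)_{ij}$. Since Assumption~\ref{asm:primitive} forces $a_{ii}>0$ for every $i$, any such path can be padded with self-loops to reach any prescribed length at least $\ell$. Choosing the common target length $n-1$ across all pairs gives $(\mA^{n-1})_{ij}>0$ for every $i,j$, so $\mA$ is primitive.

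The next step is the spectral analysis. Since $\mA$ is nonnegative with all row sums equal to $1$, one has $\|\mA\|_\infty=1$ and hence $\rho(\mA)\le 1$; the identity $\mA\mathbf{1}=\mathbf{1}$ shows the bound is attained, so $\rho(\mA)=1$. The Perron-Frobenius theorem applied to primitive matrices then yields that $1$ is a simple eigenvalue of $\mA$ and every other eigenvalue $\lambda$ satisfies $|\lambda|<1$. The associated left eigenvector may be chosen componentwise positive; rescaling by the condition $\vpi^\top\mathbf{1}=1$ pins down a unique $\vpi$ satisfying the first two identities in the proposition.

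Finally, for the limit I would use the spectral projector $\mP:=\mathbf{1}\vpi^\top$. Direct computation gives $\mP^2=\mP$ and $\mA\mP=\mP\mA=\mP$, hence the residual $\mR:=\mA-\mP$ satisfies $\mP\mR=\mR\mP=0$, and its spectrum coincides with the non-principal eigenvalues of $\mA$, all strictly inside the unit disk. Binomial expansion then collapses to $\mA^k=\mP+\mR^k$, so Gelfand's formula gives $\|\mR^k\|_2\to 0$ and therefore $\mA^k\to\mathbf{1}\vpi^\top$. The main obstacle, if one insists on a self-contained argument, is combining the primitivity count with the simplicity of the leading eigenvalue; in the paper itself a citation to the classical result is enough, as the authors have done.
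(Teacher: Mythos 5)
Your proposal is correct. Note, however, that the paper does not prove this proposition at all: it is stated as a classical result with a citation to Perron's 1907 paper, so any comparison is between your argument and a bare citation. What you supply is the standard derivation: (i) strong connectivity plus the positive diagonal entries guaranteed by Assumption~\ref{asm:primitive} let you pad paths with self-loops so that $\mA^{n-1}$ is entrywise positive, i.e.\ $\mA$ is primitive (for $n\geq 2$; $n=1$ is trivial); (ii) row-stochasticity gives $\rho(\mA)=1$, and the Perron--Frobenius theorem for primitive matrices gives simplicity of the eigenvalue $1$, strict subdominance of all other eigenvalues, and a positive left eigenvector normalized by $\vpi^\top\mathbf{1}=1$; (iii) with $\mP=\mathbf{1}\vpi^\top$ the relations $\mP^2=\mP$ and $\mA\mP=\mP\mA=\mP$ yield $\mA^k=\mP+(\mA-\mP)^k$, and since every nonzero eigenvalue of $\mA-\mP$ is a non-principal eigenvalue of $\mA$ (the eigenvalue $1$ is excluded because its right eigenspace is $\spn(\mathbf{1})$, on which $\mP$ acts as the identity), $\rho(\mA-\mP)<1$ and $(\mA-\mP)^k\to 0$. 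All three steps are sound; the only thing you lean on without proof is the primitive-matrix Perron--Frobenius theorem itself, which is exactly the content the paper outsources to the citation, so nothing essential is missing. It is worth observing that your projector identity $\mA^k-\mathbf{1}\vpi^\top=(\mA-\mathbf{1}\vpi^\top)^k$ is precisely what the paper later re-proves by induction as Lemma~\ref{lem:A_iterator}, so your route additionally recovers, as a byproduct, the quantitative mechanism ($\rho(\mA-\mathbf{1}\vpi^\top)<1$, hence geometric decay in the $\vpi$-weighted norm) that underlies the paper's mixing bounds such as Lemma~\ref{lem: matrixerror}.
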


For the equilibrium vector $\vpi=[\pi_1,\dots,\pi_n]^\top \in \mathbb{R}^n$ in Proposition \ref{prop:Perron-Frobenius}, 
we define  the matrix
${\bf\Pi}:= \operatorname{diag}(\vpi)\in\BR^{n\times n}$
and the weighted matrix norm 
$\Norm{\mA}_{\vpi} := \|{\bf\Pi}^{1/2}\mA{\bf\Pi}^{-1/2}\|_2$.
We also use $1-\beta$ to present the generalized spectral gap of the row-stochastic matrix $\mA$, where
\begin{align*}
    \beta:=\|\mA-\mathbf{1 } \vpi^{\top}\|_{\vpi}. 
\end{align*}
Additionally, we denote the equilibrium skewness of the row-stochastic matrix $\mA$ as
\begin{align*}
    \kappa:=\frac{\max_{i\in[n]} \pi_i}{\min_{i\in[n]} \pi_i} \in[1,+\infty).
\end{align*}

\begin{remark}
In the view of equation (\ref{eq:Zk}),
the parameter $\beta$ characterizes the convergence rate of~$\mZ_k$ to the weighted average $\vone_n\vpi^\top\mX$.
On the other hand, the equilibrium skewness $\kappa$ captures the disagreement between the equilibrium vector $\pi$ and the uniform vector $\vone_n/n$.
Intuitively, the sequence~$\{\mZ_k\}$ convergence to $\vone_n\vpi^\top\mX$ rapidly for small $\beta$ 
and $\vpi$ is close to $\vone_n/n$ for small $\kappa$.
\end{remark}

We impose the following assumption on problem (\ref{prob:main}).
\begin{asm}\label{asm:smoothness}
Suppose each $f_i$ is $L$-smooth, i.e., there exists $L>0$ such that 
$\Norm{\nabla f_{i}(\vx)-\nabla f_{i}(\vy)} \leq L\Norm{\vx-\vy}$
for all $i\in [n]$ and $\vx, \vy \in \mathbb{R}^{d}$. 
Additionally, the objective $f$ is lower bounded by
$f^*=\inf_{\vx\in\BR^d}f(\vx)>-\infty$.
\end{asm}

We focus on the distributed stochastic first-order method, which can access the local stochastic first-order oracle satisfying the $p$-BCM assumption as follows.

\begin{asm}\label{asm:pBCM}
For given $i\in[n]$ and $\vx\in\BR^d$, the $i$th agent can draw $\vxi_i\sim\fD_i$ and access the local stochastic first-order oracle $\nabla F_i(\vx;\vxi_i)$ which satisfies
$\BE[\nabla F_i(\vx;\vxi_i)] = \nabla f_i(\vx)$ and
$\BE\left[\Norm{\nabla F_i(\vx;\vxi_i) - \nabla f_i(\vx)}^p\right] \leq \sigma^p$
for some $\sigma>0$ and $p\in(1,2]$. 
\end{asm}

\section{The Algorithm and Main Results}\label{sec:main}
We propose decentralized normalized stochastic gradient descent with \pulldiag gradient tracking (DNSGD-PD) in Algorithm~\ref{alg:PDNSGD}, which integrates the techniques of stochastic gradient normalization, \pulldiag gradient tracking, 
and protocol of multi-consensus gossip to handle the directed communication and heavy-tailed local gradients noise. 
Note that it guarantees the inverse of $\mD_{t}=\operatorname{Diag}(\mA^{(t+1)K})$ be well-defined by an appropriate setting of $K$, since the diagonal entries for the power of $\mA$ can be bounded as follows.

\begin{prop}\label{lem:Bounded Diagonals} 
Under Assumption \ref{asm:primitive}, then the diagonal entries of $\mA^K$ holds
\begin{align*}
       [\mA^{K}]_{i i}>0 \qquad \text{and} \qquad [\mA^{K}]_{i i}^{-1} \leq \theta:=2n\kappa
\end{align*}
for all $K \geq 3(1+\ln (\kappa n))/{(1-\beta)}$ and $i \in[n]$.
\end{prop}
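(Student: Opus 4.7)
The plan is to control the convergence of $[\mA^K]_{ii}$ to $\pi_i$ via the generalized spectral gap, and then use a simple lower bound on $\pi_i$ coming from the skewness $\kappa$. All the ingredients (Perron--Frobenius limit, weighted norm, definition of $\beta$ and $\kappa$) are already set up in Proposition~\ref{prop:Perron-Frobenius} and the paragraph following it.

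First, I would exploit the rank-one projector structure of $\vone\vpi^\top$: since $\mA\vone=\vone$ and $\vpi^\top\mA=\vpi^\top$, both $\mA\,\vone\vpi^\top=\vone\vpi^\top$ and $\vone\vpi^\top\mA=\vone\vpi^\top$ hold, and $(\vone\vpi^\top)^2=\vone\vpi^\top$. A one-line induction then gives the identity
\begin{align*}
    (\mA-\vone\vpi^\top)^K=\mA^K-\vone\vpi^\top.
\end{align*}
Applying the weighted norm $\Norm{\cdot}_{\vpi}$, which is submultiplicative (it is a spectral norm in disguise), yields $\Norm{\mA^K-\vone\vpi^\top}_{\vpi}\leq\beta^K$. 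Next I would extract an entrywise bound: since $\Norm{\mPi^{1/2}(\mA^K-\vone\vpi^\top)\mPi^{-1/2}}_2\leq\beta^K$ and the spectral norm dominates every entry of a matrix, taking the $(i,i)$ entry (for which the weights cancel) produces the sharp diagonal bound
\begin{align*}
    \bigl|[\mA^K]_{ii}-\pi_i\bigr|\leq\beta^K.
\end{align*}

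Second, I would lower bound $\pi_i$ using $\kappa$. Because $\vpi^\top\vone=1$, some coordinate of $\vpi$ is at least $1/n$, so $\max_j\pi_j\geq 1/n$; the definition of $\kappa$ then gives $\min_j\pi_j\geq 1/(n\kappa)$. Combining with the previous display,
\begin{align*}
    [\mA^K]_{ii}\ \geq\ \pi_i-\beta^K\ \geq\ \frac{1}{n\kappa}-\beta^K.
\end{align*}

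Finally, I would plug in the hypothesis on $K$. Using $\ln(1/\beta)\geq 1-\beta$ for $\beta\in(0,1)$, we have $\beta^K\leq e^{-K(1-\beta)}$, so for $K\geq 3(1+\ln(n\kappa))/(1-\beta)$,
\begin{align*}
    \beta^K\ \leq\ e^{-3}(n\kappa)^{-3}\ \leq\ \frac{1}{2n\kappa},
\end{align*}
where the last inequality holds whenever $n\kappa\geq 1$ (which is always the case). This gives $[\mA^K]_{ii}\geq 1/(2n\kappa)>0$, hence $[\mA^K]_{ii}^{-1}\leq 2n\kappa=\theta$.

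The only nontrivial step is the passage from the weighted norm $\Norm{\mA^K-\vone\vpi^\top}_{\vpi}\leq\beta^K$ to the entrywise bound on the diagonal; everything else is bookkeeping. I do not anticipate any real obstacle, since the weights $\pi_i$ conveniently cancel on the diagonal and the arithmetic between $\ln(n\kappa)$ and $\ln(2n\kappa)$ is absorbed by the factor $3$ in the hypothesis.
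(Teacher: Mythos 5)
Your proposal is correct and follows essentially the same route as the paper: the identity $(\mA-\vone\vpi^\top)^K=\mA^K-\vone\vpi^\top$, the lower bound $\min_j\pi_j\geq 1/(n\kappa)$, and the observation that the hypothesis on $K$ forces $\beta^K$ below $\tfrac{1}{2n\kappa}$. The only difference is the intermediate step: you bound the diagonal deviation by $\bigl|[\mA^K]_{ii}-\pi_i\bigr|\leq\Norm{\mA^K-\vone\vpi^\top}_{\vpi}\leq\beta^K$, exploiting that the similarity weights $\pi_i^{1/2},\pi_i^{-1/2}$ cancel on the diagonal and that the spectral norm dominates entries, whereas the paper passes through the Frobenius-norm bound $\Norm{\mA^K-\vone\vpi^\top}\leq\sqrt{n\kappa}\,\beta^K$ (its Lemma~\ref{lem: matrixerror}), paying an extra $\sqrt{n\kappa}$ that the factor $3$ in the hypothesis absorbs anyway; your version is slightly sharper but leads to the same conclusion.
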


\begin{algorithm*}[h]
\caption{Normalized Stochastic Gradient Descent with \pulldiag Gradient Tracking (DNSGD-PD)}
\label{alg:PDNSGD}
\begin{algorithmic}[1]
\STATE \textbf{Input:} $\bx_0\in\BR^{1\times d}$, $\mA\in\BR^{n\times n}$, $\eta>0$, $\hat K,K,T,b\in\BN$. \\[0.12cm]
        \STATE $\mX_0=\vone\bx_0$;\quad independently sample $\vxi_{0,i,k}\sim\fD_i$ for all $k\in[b]$ at each agent $i$;  \\[0.12cm]
        \STATE \label{line:G0} $\mG_0 = \begin{bmatrix}
              {\displaystyle\frac{1}{b}\sum_{k=1}^{b} \nabla F_1(\bx_0;\vxi_{0,1,k})} & \dots & 
              {\displaystyle\frac{1}{b}\sum_{k=1}^{b} \nabla F_n(\bx_0;\vxi_{0,n,k})}
            \end{bmatrix}^\top$;\quad $\mV_0 = \mA^{\hat K}\mD_0^{-1}\mG_0$; \\[0.12cm]
        \STATE \textbf{for} $t = 0, 1, \dots, {T-1}$ \textbf{do} \\[0.12cm]   		     
        \STATE\quad\label{line:U}$\mU_t = \begin{bmatrix}
            \dfrac{\vv_{t,1}}{\Norm{\vv_{t,1}}} & \dots & \dfrac{\vv_{t,n}}{\Norm{\vv_{t,n}}}
        \end{bmatrix}^\top$; \quad $\mX_{t+1} = \mA^K\left(\mX_{t} - \eta\mU_t\right)$; \label{line:update-X} \\[0.12cm]
        \STATE\quad independently sample $\vxi_{t+1,i,k}\sim\fD_i$ for all $k\in[b]$ at each agent $i$; \\[0.12cm]
        \STATE\quad \label{line:Gt} $\displaystyle{\mG_{t+1} =\begin{bmatrix}
        {\displaystyle\frac{1}{b} \sum_{k=1}^{b} \nabla F_{1}(\vx_{t+1,1}; \vxi_{t+1,1,k})} 
        & \cdots &
        {\displaystyle\frac{1}{b} \sum_{k=1}^{b} \nabla F_{n}(\vx_{t+1,n}; \vxi_{t+1,n,k})}\end{bmatrix}^\top}$; \\[0.15cm]
        \STATE\quad \label{line:Vt1} $\mD_t= \operatorname{Diag} (\mA^{(t+1)K})$;\quad $\mD_{t+1}= \operatorname{Diag} (\mA^{(t+2)K})$; \quad $\mV_{t+1} = \mA^{K}(\mV_{t}
        + \mD_{t+1}^{-1} \mG_{t+1}- \mD_{t}^{-1} \mG_{t})$; \\[0.2cm]       
        \STATE\textbf{end for} \\[0.12cm]
        \STATE $\hat\vx_i\sim{\rm Unif}\{\vx_{0,i},\dots,\vx_{T-1,i}\}$ for each agent $i$. \\[0.12cm] 
		\STATE \textbf{Output:} $\hat\vx_i$ for each agent $i$.
\end{algorithmic}
\end{algorithm*}

We now provide the convergence analysis for our method.
Motivated by Proposition \ref{prop:Perron-Frobenius}, we focus on the function value at the weighted average vector
\begin{align*}
    \vw_t := \vpi^\top \mX_t = \sum_{i=1}^n \pi_{i}\vx_{t,i}
\end{align*}
to establish the following descent lemma.
\begin{lem}\label{lem:descent-mean}
Under Assumptions \ref{asm:primitive}--\ref{asm:pBCM}, Algorithm \ref{alg:PDNSGD} holds
\begin{align}\label{eq:descent-f}
\begin{split}
    & \BE\left[ f(\vw_{t+1})\right]\\
\leq &f(\vw_{t})-\eta \Norm{\nabla f(\vw_{t})} + \frac{\eta^{2} L}{2}  + \BE\left[\frac{2\eta}{n} \Norm{n\nabla f(\vw_{t}) -\vpi^{\top} \mV_{t}} +\frac{2\eta}{n}  \Norm{\mV_{t}-\mathbf{1} \vpi^{\top} \mV_{t}}\right].
\end{split}  
\end{align}
\end{lem}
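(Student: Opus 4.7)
The plan is to combine the smoothness inequality with the update rule, then invoke a scaled version of the standard normalized-gradient inequality. First, multiplying the update $\mX_{t+1}=\mA^K(\mX_t-\eta\mU_t)$ on the left by $\vpi^\top$ and using $\vpi^\top\mA^K=\vpi^\top$ from Proposition \ref{prop:Perron-Frobenius} collapses the mixing step and yields $\vw_{t+1} = \vw_t - \eta\,\vpi^\top\mU_t$. Feeding this into the $L$-smoothness descent inequality from Assumption \ref{asm:smoothness} gives
\begin{equation*}
f(\vw_{t+1}) \leq f(\vw_t) - \eta\inner{\nabla f(\vw_t)}{\vpi^\top\mU_t} + \frac{L\eta^2}{2}\Norm{\vpi^\top\mU_t}^2.
\end{equation*}
Since each row $\vu_{t,i}=\vv_{t,i}/\Norm{\vv_{t,i}}$ is a unit vector and $\vpi^\top\mathbf{1}=1$ with nonnegative entries, the triangle inequality immediately bounds $\Norm{\vpi^\top\mU_t}\leq 1$, producing the clean $L\eta^2/2$ curvature term.

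For the inner-product term I would use the following scaled normalization trick: for any $\vv\neq\mathbf{0}$ and any $c>0$,
\begin{equation*}
-\inner{\vg}{\vv/\Norm{\vv}} \leq -\Norm{\vg} + \frac{2}{c}\Norm{c\vg-\vv},
\end{equation*}
which follows by writing $-c\inner{\vg}{\vv/\Norm{\vv}} = -\inner{c\vg-\vv}{\vv/\Norm{\vv}} - \Norm{\vv}$, applying Cauchy-Schwarz, and bounding $-\Norm{\vv} \leq -\Norm{c\vg} + \Norm{c\vg-\vv}$. The correct scale here is $c=n$: a short induction using $\vpi^\top\mA=\vpi^\top$ and the initialization $\mV_0=\mA^{\hat K}\mD_0^{-1}\mG_0$ shows the \pulldiag invariant $\vpi^\top\mV_t = \vpi^\top\mD_t^{-1}\mG_t$, which in expectation approximates $n\nabla f(\vw_t)$ (not $\nabla f(\vw_t)$), because the rescaling by $\mD_t^{-1}$ inflates each local gradient contribution by roughly $1/\pi_i$.

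Applying the scaled inequality per agent with $\vg=\nabla f(\vw_t)$, $\vv=\vv_{t,i}$, $c=n$ and taking the $\vpi$-weighted sum yields
\begin{equation*}
-\inner{\nabla f(\vw_t)}{\vpi^\top\mU_t} \leq -\Norm{\nabla f(\vw_t)} + \frac{2}{n}\sum_{i=1}^n \pi_i\,\Norm{n\nabla f(\vw_t) - \vv_{t,i}}.
\end{equation*}
The triangle inequality $\Norm{n\nabla f(\vw_t)-\vv_{t,i}} \leq \Norm{n\nabla f(\vw_t)-\vpi^\top\mV_t} + \Norm{\vpi^\top\mV_t-\vv_{t,i}}$ splits this into a gradient-tracking error and a consensus error. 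The first piece comes out of the weighted sum since $\sum_i\pi_i=1$, and for the second, Cauchy-Schwarz together with $\pi_i\in[0,1]$ gives $\sum_i\pi_i\Norm{\vv_{t,i}-\vpi^\top\mV_t} \leq \sqrt{\sum_i\pi_i\Norm{\vv_{t,i}-\vpi^\top\mV_t}^2} \leq \Norm{\mV_t-\mathbf{1}\vpi^\top\mV_t}$ in the Frobenius norm. Plugging back into the smoothness inequality and taking expectations produces (\ref{eq:descent-f}).

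The only nontrivial insight is the choice of the scale factor $c=n$ in the normalization step: without it, a naive per-agent application of the unscaled inequality leaves an unmatched leading term and an error of the form $\Norm{\nabla f(\vw_t)-\vv_{t,i}}$, which does not align with what the \pulldiag tracker actually approximates. The factor $1/n$ appearing in front of both error terms of (\ref{eq:descent-f}) is exactly the fingerprint of this rescaling, and everything else reduces to triangle inequality, Cauchy-Schwarz, and the left-eigenvector property of $\vpi$.
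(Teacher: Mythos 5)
Your proof is correct and follows essentially the same route as the paper: collapse the mixing step with the left-eigenvector $\vpi^\top$, apply $L$-smoothness, invoke the normalized-gradient inequality at the scale $n\nabla f(\vw_t)$ (your scaled trick with $c=n$ is exactly the paper's Lemma~\ref{lem: baseineq} applied to $\va=n\nabla f(\vw_t)$, $\vb=\vv_{t,i}$), split by the triangle inequality, and control the consensus term and $\Norm{\vpi^\top\mU_t}\le 1$ via Cauchy--Schwarz and $\sum_i\pi_i=1$. The only cosmetic differences are that you re-derive the scaled inequality rather than citing it and use a Jensen-type weighted Cauchy--Schwarz (weights $\pi_i$) where the paper uses $\sum_i\pi_i^2\le 1$; both are valid.
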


Compared with the analysis of normalized gradient descent methods on a single machine \cite{hubler2024gradient}, the second line of equation (\ref{eq:descent-f}) includes the additional terms of~$n^{-1}\Norm{n\nabla f(\vw_{t}) - \vpi^{\top} \mV_{t}}$ 
and $\Norm{\mV_{t}-\mathbf{1} \vpi^{\top} \mV_{t}}$,
where the first one quantifies the difference between the global gradient and the \pulldiag descent direction, and the second one is the consensus error. 

The key to achieving \emph{the linear speedup} of proposed method is the following upper bound for the expectation of the term $n^{-1}\Norm{n\nabla f(\vw_{t}) - \vpi^{\top} \mV_{t}}$.

\begin{lem}\label{lem:Estimate descent deviation} Under Assumptions \ref{asm:primitive}--\ref{asm:pBCM}, Algorithm \ref{alg:PDNSGD} holds
 \begin{align}\label{eq:derivation}
 \begin{split}     
n^{-1}\BE\left[\Norm{n\nabla f(\vw_{t})-\vpi^{\top} \mV_t}\right] \leq & \frac{\left(1+\sqrt{\kappa}\theta \beta^{(t+1)K}\right)L}{\sqrt{n}} \Norm{\mX_{t}-\mathbf{1} \vpi^{\top} \mX_{t}}  + \theta  \sqrt{\kappa} \beta^{(t+1)K}\Norm{\nabla f(\vw_{t})} \\
& + \frac{2\sqrt{2\kappa}\theta\beta^{(t+1)K}\sigma}{b^{1-1 / p}}+\frac{8\sigma}{(nb)^{1-1 / p}}.
 \end{split}
\end{align}
\end{lem}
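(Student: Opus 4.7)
My strategy has three steps: collapse $\vpi^\top\mV_t$ to a closed form via a gradient-tracking invariant, decompose the residual into three transparent pieces, and bound each piece with the tools at hand.

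\textbf{Step 1 (Tracking invariant).} Premultiplying the update $\mV_{t+1} = \mA^K(\mV_t + \mD_{t+1}^{-1}\mG_{t+1} - \mD_t^{-1}\mG_t)$ by $\vpi^\top$ and using $\vpi^\top\mA = \vpi^\top$ from Proposition~\ref{prop:Perron-Frobenius} shows that the quantity $\vpi^\top \mV_t - \vpi^\top \mD_t^{-1}\mG_t$ is conserved. Because $\mV_0 = \mA^{\hat K}\mD_0^{-1}\mG_0$, the invariant is zero at $t=0$, yielding the clean identity $\vpi^\top\mV_t = \vpi^\top\mD_t^{-1}\mG_t$.

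\textbf{Step 2 (Decomposition).} Inserting $\vone^\top\mG_t$ and $\vone^\top\nabla\mF(\mX_t)$ gives
\begin{align*}
n\nabla f(\vw_t) - \vpi^\top\mV_t
 = \underbrace{[\,n\nabla f(\vw_t) - \vone^\top\nabla\mF(\mX_t)\,]}_{\mathrm{I}}
 + \underbrace{\vone^\top[\nabla\mF(\mX_t) - \mG_t]}_{\mathrm{II}}
 + \underbrace{[\vone^\top - \vpi^\top\mD_t^{-1}]\mG_t}_{\mathrm{III}}.
\end{align*}
Term $\mathrm{I} = \sum_i[\nabla f_i(\vw_t)-\nabla f_i(\vx_{t,i})]$ is deterministic, and Assumption~\ref{asm:smoothness} together with Cauchy--Schwarz yields $\|\mathrm{I}\| \le L\sqrt{n}\,\|\mX_t - \vone\vpi^\top\mX_t\|$, the first line of the target bound. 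Term $\mathrm{II}$ is a sum of $nb$ independent, zero-mean random vectors satisfying $p$-BCM; applying a von Bahr--Esseen inequality gives $\BE[\|\mathrm{II}\|^p] \le 2n\sigma^p/b^{p-1}$, and Jensen's inequality produces $\BE[\|\mathrm{II}\|] \le (2n)^{1/p}\sigma/b^{1-1/p}$. Dividing by $n$ supplies the last summand $\mathcal{O}(\sigma/(nb)^{1-1/p})$.

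\textbf{Step 3 (Consistency term).} For $\mathrm{III}$, observe $\vone^\top - \vpi^\top\mD_t^{-1} = \vpi^\top(\mPi^{-1} - \mD_t^{-1})$, a row vector whose $i$-th entry is $(d_{t,i}-\pi_i)/d_{t,i}$. The algebraic identity $(\mA - \vone\vpi^\top)^k = \mA^k - \vone\vpi^\top$ (verified from $\mA\vone = \vone$ and $\vpi^\top\mA = \vpi^\top$), combined with submultiplicativity of $\|\cdot\|_\vpi$, yields $\|\mA^{(t+1)K}-\vone\vpi^\top\|_\vpi \le \beta^{(t+1)K}$, hence $|d_{t,i}-\pi_i| \le \beta^{(t+1)K}$; Proposition~\ref{lem:Bounded Diagonals} gives $d_{t,i}^{-1}\le\theta$. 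I then split $\mG_t = [\mG_t - \nabla\mF(\mX_t)] + [\nabla\mF(\mX_t) - \nabla\mF(\vone\vpi^\top\mX_t)] + \nabla\mF(\vone\vpi^\top\mX_t)$ and bound the three resulting contributions using the same tools as Step~2: the noise part produces the $\sqrt{\kappa}\theta\beta^{(t+1)K}\sigma/b^{1-1/p}$ term (via von Bahr--Esseen with the row-vector weights), the smoothness part yields the $L\sqrt{\kappa}\theta\beta^{(t+1)K}/\sqrt{n}\cdot\|\mX_t - \vone\vpi^\top\mX_t\|$ coefficient (combining with Term~$\mathrm{I}$), and the $\nabla\mF(\vone\vpi^\top\mX_t)$ part delivers the $\theta\sqrt{\kappa}\beta^{(t+1)K}\|\nabla f(\vw_t)\|$ term.

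\textbf{Main obstacle.} The delicate step is the last contribution: controlling $\|\sum_i[(d_{t,i}-\pi_i)/d_{t,i}]\nabla f_i(\vw_t)\|$ by $\|\nabla f(\vw_t)\|$ with a $\sqrt{\kappa}\theta\beta^{(t+1)K}$ prefactor. Since no heterogeneity assumption is imposed, individual $\|\nabla f_i(\vw_t)\|$ cannot be controlled by $\|\nabla f(\vw_t)\|$, so the bound must instead exploit the matrix structure of $\mA^{(t+1)K}-\vone\vpi^\top$ through a weighted Cauchy--Schwarz that converts the $\vpi$-weighted operator-norm bound to an entrywise $\ell^2$ bound — and it is precisely this conversion, bounded by $\max_{i,j}\pi_j/\pi_i = \kappa$, that introduces the $\sqrt{\kappa}$ factor.
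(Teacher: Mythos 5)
Your overall architecture coincides with the paper's proof: the tracking invariant $\vpi^\top\mV_t=\vpi^\top\mD_t^{-1}\mG_t$ is established exactly as you do, and your terms $\mathrm{I}$, $\mathrm{II}$, and the three-way split of $\mathrm{III}$ reproduce (after regrouping) the paper's terms $A_1$, $B_2$, $B_1$ and $A_2$. Your treatments of $\mathrm{I}$, $\mathrm{II}$, the noise part of $\mathrm{III}$, and the smoothness part of $\mathrm{III}$ are all sound and use the same ingredients as the paper (the von Bahr--Esseen-type moment bound, cf.\ Lemma~\ref{lem:heavy-tailed_cite} and Lemma~\ref{lem:heavy-tailed}, and the bound $\Norm{\vone^\top-\vpi^\top\mD_t^{-1}}\leq\theta\sqrt{n\kappa}\beta^{(t+1)K}$ of Lemma~\ref{lem:DiagnoseError}); your weighted handling of the noise part of $\mathrm{III}$ is, if anything, cleaner than the paper's.

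The genuine gap is the last contribution of $\mathrm{III}$, which you correctly single out as the delicate step but do not actually prove. Writing $c_i=1-\pi_i/d_{t,i}$ with $d_{t,i}=[\mA^{(t+1)K}]_{ii}$, you need
\begin{align*}
\Norm{\textstyle\sum_{i=1}^n c_i\,\nabla f_i(\vw_t)}\;\leq\; n\,\theta\sqrt{\kappa}\,\beta^{(t+1)K}\,\Norm{\nabla f(\vw_t)},
\end{align*}
and you assert this follows from a ``weighted Cauchy--Schwarz'' converting the $\vpi$-weighted operator-norm bound on $\mA^{(t+1)K}-\vone\vpi^\top$ into an entrywise bound. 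No such argument can exist: the operator-norm control only bounds the magnitudes $|c_i|$, whereas the obstruction is cancellation. Concretely, take $n=2$, $\nabla f_1(\vw_t)=g=-\nabla f_2(\vw_t)$ with $\Norm{g}$ arbitrarily large; then $\nabla f(\vw_t)=0$ while $\sum_i c_i\nabla f_i(\vw_t)=(c_1-c_2)g$, which is nonzero whenever $c_1\neq c_2$ (and for finite $K$ the $c_i$ are generically distinct), so no bound of the form $C\Norm{\nabla f(\vw_t)}$ can hold, no matter how small $\beta^{(t+1)K}$ is. What one can prove along your lines is a bound in terms of $\Norm{\nabla\mF(\vone\vw_t)}$, i.e.\ in terms of the gradient heterogeneity at $\vw_t$ --- exactly the quantity you concede is not controlled by $\Norm{\nabla f(\vw_t)}$ under the stated assumptions. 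It is worth noting that this is also precisely the thinnest step of the paper's own proof, which passes from $\Norm{\nabla\mF(\vone\vw_t)}$ to $\sqrt{n}\Norm{\nabla f(\vw_t)}$ in the bound of $A_2$, an inequality that by Jensen in fact holds in the reverse direction in general. So your diagnosis of where the difficulty lies is accurate, but the sketched fix does not close it, and as written your proposal does not establish the stated bound.
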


Note that taking appropriate $K$ can guarantee $\beta^{(t+1)K}$ be sufficient small, which implies the term~$8\sigma/((nb)^{1-1 / p})$ dominates the upper bound (\ref{eq:derivation}).
Hence, each agent can take the minibatch size of $b=\Theta\big(n^{-1}\sigma^{\frac{p}{p-1}}\epsilon^{-\frac{p}{p-1}}\big)$ to achieve the desired accuracy $\epsilon$, leading to the linear speedup with respect to the agent number $n$.

For the consensus error, we introduce the notation 
\begin{align*}
\rho:=\Norm{\mA^{K}-\mathbf{1} \vpi^{\top}} \leq \sqrt{n\kappa}\beta^K
\end{align*}
to characterize the convergence of the multi-step gossip,
where the derivation for the upper bound $\sqrt{n\kappa}\beta^K$ can be founded in Lemma \ref{lem: matrixerror} of Appendix \ref{sec:supporting-lemmas}.
Then we establish the recursions for the consensus errors in following lemmas.

\begin{lem}\label{lem:consensus_x}
Under Assumptions \ref{asm:primitive}--\ref{asm:pBCM}, Algorithm \ref{alg:PDNSGD} holds
\begin{align*}
 \Norm{\mX_{t+1}-\mathbf{1}\vpi^\top  \mX_{t+1}} \leq \rho(\Norm{\mX_{t}-\mathbf{1}\vpi^\top  \mX_{t}} + 2\sqrt{n} \eta).
\end{align*}
\end{lem}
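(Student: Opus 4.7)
The plan is to start from the update $\mX_{t+1}=\mA^{K}(\mX_{t}-\eta\mU_{t})$ in line~\ref{line:update-X} of Algorithm~\ref{alg:PDNSGD} and exploit the left eigenvector identity $\vpi^{\top}\mA^{K}=\vpi^{\top}$ from Proposition~\ref{prop:Perron-Frobenius}. Subtracting $\mathbf{1}\vpi^{\top}\mX_{t+1}=\mathbf{1}\vpi^{\top}\mA^{K}(\mX_{t}-\eta\mU_{t})=\mathbf{1}\vpi^{\top}(\mX_{t}-\eta\mU_{t})$ from the update gives the single identity
\begin{align*}
\mX_{t+1}-\mathbf{1}\vpi^{\top}\mX_{t+1}
=(\mA^{K}-\mathbf{1}\vpi^{\top})(\mX_{t}-\eta\mU_{t}),
\end{align*}
which will be the entire algebraic backbone of the proof.

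Next, I would use the centering observation $(\mA^{K}-\mathbf{1}\vpi^{\top})\mathbf{1}\vpi^{\top}=\mathbf{0}$ (which follows since $\mA^{K}\mathbf{1}=\mathbf{1}$ and $\vpi^{\top}\mathbf{1}=1$) to rewrite the right-hand side as
\begin{align*}
(\mA^{K}-\mathbf{1}\vpi^{\top})\bigl[(\mX_{t}-\mathbf{1}\vpi^{\top}\mX_{t})-\eta(\mU_{t}-\mathbf{1}\vpi^{\top}\mU_{t})\bigr].
\end{align*}
Then the triangle inequality together with the sub-multiplicativity of the Frobenius norm (which gives $\Norm{\mB\mC}\le\Norm{\mB}\Norm{\mC}$, so effectively $\Norm{(\mA^{K}-\mathbf{1}\vpi^{\top})\mY}\le\rho\Norm{\mY}$) yields
\begin{align*}
\Norm{\mX_{t+1}-\mathbf{1}\vpi^{\top}\mX_{t+1}}
\le \rho\Norm{\mX_{t}-\mathbf{1}\vpi^{\top}\mX_{t}}
+\rho\eta\Norm{\mU_{t}-\mathbf{1}\vpi^{\top}\mU_{t}}.
\end{align*}

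The final piece is to show $\Norm{\mU_{t}-\mathbf{1}\vpi^{\top}\mU_{t}}\le 2\sqrt{n}$. Since every row of $\mU_{t}$ in line~\ref{line:U} is the unit vector $\vv_{t,i}/\Norm{\vv_{t,i}}$, we have $\Norm{\mU_{t}}^{2}=n$, so $\Norm{\mU_{t}}=\sqrt{n}$. For the centered part, $\vpi^{\top}\mU_{t}=\sum_{i=1}^{n}\pi_{i}\vu_{t,i}^{\top}$ has Euclidean norm at most $\sum_{i}\pi_{i}=1$ by the triangle inequality together with $\Norm{\vu_{t,i}}=1$, so $\Norm{\mathbf{1}\vpi^{\top}\mU_{t}}=\sqrt{n}\Norm{\vpi^{\top}\mU_{t}}\le\sqrt{n}$. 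Combining these with a final triangle inequality gives the stated bound.

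The argument is almost entirely algebraic and has no real obstacle; the only subtlety worth flagging is the constant $2$ in front of $\sqrt{n}\eta$, which I expect may appear surprising at first glance. It arises precisely because one must apply the centering trick to $\mU_{t}$ as well as $\mX_{t}$ (rather than simply bounding $\Norm{(\mA^{K}-\mathbf{1}\vpi^{\top})\mU_{t}}\le\rho\Norm{\mU_{t}}$ in a way that would dispense with the factor $2$ but sacrifice the structure of the recursion), thereby contributing a $\sqrt{n}$ from each of $\mU_{t}$ and $\mathbf{1}\vpi^{\top}\mU_{t}$.
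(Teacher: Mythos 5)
Your proposal is correct and follows essentially the same route as the paper's proof: the identity $\mX_{t+1}-\mathbf{1}\vpi^\top\mX_{t+1}=(\mA^{K}-\mathbf{1}\vpi^{\top})(\mX_{t}-\eta\mU_{t})$, the centering via $(\mA^{K}-\mathbf{1}\vpi^{\top})\mathbf{1}\vpi^{\top}=\mathbf{0}$, and the bound $\Norm{\mU_{t}-\mathbf{1}\vpi^{\top}\mU_{t}}\leq 2\sqrt{n}$ from unit rows and $\Norm{\vpi^{\top}\mU_{t}}\leq 1$ are exactly the steps the paper uses. (As a minor aside, bounding $\Norm{(\mA^{K}-\mathbf{1}\vpi^{\top})\mU_{t}}\leq\rho\sqrt{n}$ directly would in fact preserve the recursion and sharpen the constant, but the stated bound with $2\sqrt{n}\eta$ is what the lemma claims and your argument proves it.)
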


\begin{lem} \label{lem:consensus_v}
Under Assumptions \ref{asm:primitive}--\ref{asm:pBCM}, Algorithm \ref{alg:PDNSGD} holds
\begin{align*}
&\BE\left[\Norm{\mV_{t+1}-\mathbf{1}\vpi^{\top} \mV_{t+1}}\right]\\
\leq&\rho\Big(\Norm{\mV_{t}-\mathbf{1}\vpi^{\top} \mV_{t}}+\theta L(1+\rho+2 n^2\kappa^{3/2} \beta^{(t+1)K})\Norm{\mX_{t}-\mathbf{1} \vpi^{\top} \mX_{t}}  \\
&+2 \theta n^{5/2}\kappa^{3/2} \beta^{(t+1)K}\Norm{\nabla f(\vw_{t})} +  (2 \rho +1)\theta L \eta \sqrt{n}+ \frac{4 \sqrt{2 n}\theta\sigma(1+n^2\kappa^{3/2} \beta^{(t+1)K})}{b^{1-1 / p}}\,\Big).
\end{align*}
\end{lem}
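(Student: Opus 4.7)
The plan follows the standard gradient-tracking consensus-error recursion, adapted to the time-varying diagonal preconditioner $\mD_t$ of \pulldiag. I would (i) derive a contraction via the left-eigenvector property $\vpi^{\top}\mA^{K}=\vpi^{\top}$; (ii) split the stochastic-tracking increment $\mD_{t+1}^{-1}\mG_{t+1}-\mD_{t}^{-1}\mG_{t}$ into ``smoothness'', ``preconditioner change'', and ``noise'' pieces; and (iii) bound these three using, respectively, the $\mX$-displacement, the geometric decay of $\mA^{kK}\to\mathbf{1}\vpi^{\top}$, and Assumption~\ref{asm:pBCM}.

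\paragraph{Contraction identity.} Using $\vpi^{\top}\mA^{K}=\vpi^{\top}$ together with $\mA^{K}\mathbf{1}=\mathbf{1}$, one checks $(\mA^{K}-\mathbf{1}\vpi^{\top})\mathbf{1}\vpi^{\top}=\mathbf{0}$, which lets me rewrite the \pulldiag update as
\begin{align*}
\mV_{t+1}-\mathbf{1}\vpi^{\top}\mV_{t+1}=(\mA^{K}-\mathbf{1}\vpi^{\top})\bigl(\mV_{t}-\mathbf{1}\vpi^{\top}\mV_{t}+\mD_{t+1}^{-1}\mG_{t+1}-\mD_{t}^{-1}\mG_{t}\bigr).
\end{align*}
Taking the Frobenius norm and pulling out $\|\mA^{K}-\mathbf{1}\vpi^{\top}\|_{2}\le\rho$ yields the skeleton
\begin{align*}
\Norm{\mV_{t+1}-\mathbf{1}\vpi^{\top}\mV_{t+1}}\le\rho\,\Norm{\mV_{t}-\mathbf{1}\vpi^{\top}\mV_{t}}+\rho\,\Norm{\mD_{t+1}^{-1}\mG_{t+1}-\mD_{t}^{-1}\mG_{t}},
\end{align*}
reducing the task to bounding the tracking increment.

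\paragraph{Bounding the tracking increment.} Writing $\mG_t=\nabla\mF(\mX_t)+\mN_t$ with $\mN_t$ the zero-mean noise, I split
\begin{align*}
\mD_{t+1}^{-1}\mG_{t+1}-\mD_{t}^{-1}\mG_{t}=\mD_{t+1}^{-1}\bigl(\nabla\mF(\mX_{t+1})-\nabla\mF(\mX_{t})\bigr)+\bigl(\mD_{t+1}^{-1}-\mD_{t}^{-1}\bigr)\nabla\mF(\mX_{t})+\mD_{t+1}^{-1}\mN_{t+1}-\mD_{t}^{-1}\mN_{t}.
\end{align*}
For the first piece I would combine $L$-smoothness with a displacement bound: rewriting $\mX_{t+1}-\mX_{t}=(\mA^{K}-\mI)(\mX_{t}-\mathbf{1}\vpi^{\top}\mX_{t})-\eta\,\mA^{K}\mU_{t}$ via $(\mA^{K}-\mI)\mathbf{1}\vpi^{\top}=\mathbf{0}$, then using $\|(\mA^{K}-\mI)\mZ\|\le(\rho+1)\|\mZ\|$ on $\mZ=\mX_{t}-\mathbf{1}\vpi^{\top}\mX_{t}$ and $\|\mA^{K}\mU_{t}\|\le\sqrt{n}$ (from row-stochasticity applied to the unit-norm rows of $\mU_{t}$) gives $\|\mX_{t+1}-\mX_{t}\|\le(\rho+1)\|\mX_{t}-\mathbf{1}\vpi^{\top}\mX_{t}\|+\eta\sqrt{n}$; multiplying by $\theta L$ (using $\|\mD_{t+1}^{-1}\|_{2}\le\theta$ from Proposition~\ref{lem:Bounded Diagonals}) produces the corresponding $\theta L(\rho+1)$ and $\theta L\eta\sqrt{n}$ contributions. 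For the second piece, Lemma~\ref{lem: matrixerror} gives the entry-wise estimate $|[\mA^{(t+1)K}]_{ii}-\pi_{i}|\le\sqrt{n\kappa}\,\beta^{(t+1)K}$; combining with Proposition~\ref{lem:Bounded Diagonals} through $|a^{-1}-b^{-1}|=|a-b|/(ab)$ yields $|[\mD_{t+1}^{-1}-\mD_{t}^{-1}]_{ii}|\le 2\theta^{2}\sqrt{n\kappa}\,\beta^{(t+1)K}$; then $\|\nabla\mF(\mX_{t})\|$ is controlled by $L$-smoothness around $\mathbf{1}\vw_{t}^{\top}$ together with the rank-one extraction $\nabla\mF(\mathbf{1}\vw_{t}^{\top})=\vpi\mathbf{1}^{\top}\nabla\mF(\mathbf{1}\vw_{t}^{\top})+\text{(residual)}$, whose leading summand equals $n\vpi\,\nabla f(\vw_{t})^{\top}$ and produces the $n^{5/2}\kappa^{3/2}\beta^{(t+1)K}\|\nabla f(\vw_{t})\|$ term while the residual is reabsorbed via smoothness. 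For the noise piece, Assumption~\ref{asm:pBCM} bounds each row by $\BE\|\vn_{t,i}\|^{p}\le\sigma^{p}/b^{p-1}$, and the subadditivity $(x+y)^{p/2}\le x^{p/2}+y^{p/2}$ valid for $p\in(1,2]$, combined with Jensen, yields $\BE\|\mN_t\|\le\sqrt{n}\,\sigma/b^{1-1/p}$ (up to absolute constants); the $(1+n^{2}\kappa^{3/2}\beta^{(t+1)K})$ multiplier then emerges when combining the $\theta$ bound on $\|\mD_{t+1}^{-1}\|$ with the preconditioner-change contribution on $\mN$.

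\paragraph{Main obstacle.} The trickiest step is bringing $\|\nabla f(\vw_{t})\|$, rather than the a~priori uncontrollable $\sqrt{\sum_{i}\|\nabla f_{i}(\vw_{t})\|^{2}}$, into the bound for $(\mD_{t+1}^{-1}-\mD_{t}^{-1})\nabla\mF(\mX_{t})$: this needs the rank-one projection $\vpi\mathbf{1}^{\top}\nabla\mF(\mathbf{1}\vw_{t}^{\top})=n\vpi\,\nabla f(\vw_{t})^{\top}$ together with a careful reabsorption of the orthogonal residual via $L$-smoothness, while ensuring that every polynomial-in-$(n,\kappa)$ prefactor sits behind the geometrically decaying $\beta^{(t+1)K}$ (so it does not spoil the final sample/communication complexity). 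Bookkeeping these $\beta^{(t+1)K}$ factors across all three pieces in order to hit the exact claimed coefficients $\theta L(1+\rho+2n^{2}\kappa^{3/2}\beta^{(t+1)K})$, $2\theta n^{5/2}\kappa^{3/2}\beta^{(t+1)K}$, $(2\rho+1)\theta L\eta\sqrt{n}$, and $4\sqrt{2n}\,\theta\sigma(1+n^{2}\kappa^{3/2}\beta^{(t+1)K})/b^{1-1/p}$ is where most of the technical effort lies.
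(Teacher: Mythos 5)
Your skeleton coincides with the paper's proof: the same contraction through $(\mA^K-\mathbf{1}\vpi^\top)$ using $\vpi^\top\mA=\vpi^\top$ and $(\mA^K-\mathbf{1}\vpi^\top)\mathbf{1}\vpi^\top=\mathbf{0}$, followed by splitting $\mD_{t+1}^{-1}\mG_{t+1}-\mD_t^{-1}\mG_t$ into a smoothness part, a preconditioner-change part, and noise, bounded via Proposition~\ref{lem:Bounded Diagonals}, Lemma~\ref{lem: matrixerror} (resp.\ Lemma~\ref{lem: iteration error for D}), and Lemma~\ref{lem:heavy-tailed}. Your variants are mostly benign or even tighter: bounding $\Norm{\mX_{t+1}-\mX_t}$ directly through $(\mA^K-\mI)(\mX_t-\mathbf{1}\vpi^\top\mX_t)-\eta\mA^K\mU_t$ replaces the paper's detour through $\nabla\mF(\mathbf{1}\vw_{t+1})$, $\nabla\mF(\mathbf{1}\vw_t)$ and Lemma~\ref{lem:consensus_x}; and applying the $\mD$-difference only to $\nabla\mF(\mX_t)$ rather than to $\mG_t$ (the paper's $C_2$) in fact removes, rather than creates, the $(1+n^2\kappa^{3/2}\beta^{(t+1)K})$ multiplier on the noise term, so your explanation of where that multiplier ``emerges'' does not match your own decomposition, though the resulting bound is only smaller. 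Two coefficient-level remarks: your elementary $|a^{-1}-b^{-1}|=|a-b|/(ab)$ route is about a factor $2$ looser than Lemma~\ref{lem: iteration error for D}, which the paper invokes directly; and the aggregation you describe ($\ell^p$-subadditivity plus Jensen) yields $\BE\Norm{\mN_t}\lesssim n^{1/p}\sigma/b^{1-1/p}$, not $\sqrt{n}\,\sigma/b^{1-1/p}$, so hitting the stated $\sqrt{2n}$ constants requires the paper's row-wise treatment of the noise matrix.

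The genuine gap is the step you yourself single out: extracting $\Norm{\nabla f(\vw_t)}$ from $(\mD_{t+1}^{-1}-\mD_t^{-1})\nabla\mF(\mX_t)$. You write $\nabla\mF(\mathbf{1}\vw_t)=\vpi\mathbf{1}^\top\nabla\mF(\mathbf{1}\vw_t)+\mathrm{residual}$ and claim the residual ``is reabsorbed via smoothness.'' It cannot be: the residual has rows $\nabla f_i(\vw_t)^\top-n\pi_i\nabla f(\vw_t)^\top$, a cross-agent gradient dissimilarity evaluated at the single point $\vw_t$. Assumption~\ref{asm:smoothness} only controls differences of each individual $\nabla f_i$ between two points, the consensus error $\Norm{\mX_t-\mathbf{1}\vpi^\top\mX_t}$ gives no handle on it (the residual persists even when $\mX_t=\mathbf{1}\vw_t$), and no bounded-dissimilarity assumption is available in this paper. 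So, as written, your argument does not produce the claimed term $2\theta n^{5/2}\kappa^{3/2}\beta^{(t+1)K}\Norm{\nabla f(\vw_t)}$. For reference, the paper takes no projection detour: after $\Norm{\nabla\mF(\mX_t)}\le L\Norm{\mX_t-\mathbf{1}\vpi^\top\mX_t}+\Norm{\nabla\mF(\mathbf{1}\vw_t)}$ it passes directly to $\sqrt{n}\Norm{\nabla f(\vw_t)}$ --- i.e.\ it bounds exactly the quantity whose problematic part is your residual --- so you have correctly located the delicate point, but your mechanism does not close it (and this is also the one place where the paper's own inequality, $\Norm{\nabla\mF(\mathbf{1}\vw_t)}\le\sqrt{n}\Norm{\nabla f(\vw_t)}$, merits scrutiny under heterogeneous local distributions).
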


The upper bounds provided by Lemmas \ref{lem:descent-mean}--\ref{lem:consensus_v} motivate us to construct the following Lyapunov function
\begin{align*}
\small\begin{split}
\Phi_t := &f(\vw_{t})  +  \frac{32\eta L}{\sqrt{n}}\Norm{\mX_{t}-\mathbf{1}\vpi^\top  \mX_{t}} + \frac{4\eta}{n} \Norm{\mV_{t}-\mathbf{1}\vpi^{\top} \mV_{t}}.
\end{split}
\end{align*}

Applying Lemmas \ref{lem:descent-mean}--\ref{lem:consensus_v} with leads to
\begin{align*}
\begin{split}
\BE\left[\frac{1}{T}\sum_{t=0}^{T-1}\Norm{\nabla f(\vw_t)}\right] 
\leq \frac{2\BE[\Phi_0-\Phi_T]}{T\eta}  + \fO(\epsilon).
\end{split}
\end{align*}
By appropriate settings of $\hat K$ and $K$, the above result implies each agent can attain an $\epsilon$-stationary point in expectation

\begin{thm}\label{thm:main}
Under Assumptions \ref{asm:primitive}--\ref{asm:pBCM}, running Algorithm~\ref{alg:PDNSGD} with parameter settings
$T=\Theta\left(L\Delta\epsilon^{-2}\right)$, $\eta=\Theta(\epsilon/L)$, 
$b=\Theta\big(n^{-1} \sigma^{\frac{p}{p-1}}\epsilon^{-\frac{p}{p-1}}\big)$, 
and $\hat K = K = \tilde{{\Theta}} \left(1/(1-\beta)\right)$
guarantees
$\BE[\Norm{\nabla f({\hat\vx}_i)}] \leq \epsilon$
for all $i\in[n]$, where $\Delta:=f(\bar\vx_0)-f^*$.
Additionally, it takes the overall sample complexity of
$\mathcal{O}(L \sigma^{\frac{p}{p-1}}\epsilon^{-\frac{3p-2}{p-1}}\Delta)$ and the communication complexity of
${\tilde\fO}(L(1-\beta)^{-1}\epsilon^{-2}\Delta)$.
\end{thm}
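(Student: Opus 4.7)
}
The plan is to combine the four preceding lemmas into a single contraction inequality for the Lyapunov function
\[
\Phi_t := f(\vw_t) + \frac{32\eta L}{\sqrt{n}}\Norm{\mX_t - \mathbf{1}\vpi^\top \mX_t} + \frac{4\eta}{n}\Norm{\mV_t - \mathbf{1}\vpi^\top \mV_t},
\]
then telescope across $t=0,\ldots,T-1$ and balance the resulting error terms against the target accuracy $\epsilon$. First I would substitute the bound of Lemma~\ref{lem:Estimate descent deviation} for $n^{-1}\Norm{n\nabla f(\vw_t)-\vpi^\top \mV_t}$ into the descent inequality of Lemma~\ref{lem:descent-mean}. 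This gives a descent of $f(\vw_t)$ of the form
\[
\BE[f(\vw_{t+1})]-f(\vw_t) \leq -\eta(1-c_1\theta\sqrt{\kappa}\beta^{(t+1)K})\Norm{\nabla f(\vw_t)} + \frac{\eta^2 L}{2} + \frac{c_2 \eta L}{\sqrt{n}}\Norm{\mX_t-\mathbf{1}\vpi^\top\mX_t} + \frac{2\eta}{n}\Norm{\mV_t-\mathbf{1}\vpi^\top\mV_t} + \frac{c_3\eta\sigma}{(nb)^{1-1/p}}
\]
plus lower-order contributions controlled by $\beta^{(t+1)K}$.

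Next I would add the consensus recursions of Lemmas~\ref{lem:consensus_x} and~\ref{lem:consensus_v} with the weights $32\eta L/\sqrt{n}$ and $4\eta/n$ chosen precisely so that (\emph{i}) the $\Norm{\mX_t-\mathbf{1}\vpi^\top\mX_t}$ term appearing in the bound on $\BE[\Norm{\mV_{t+1}-\mathbf{1}\vpi^\top\mV_{t+1}}]$ is absorbed by the coefficient in front of $\Norm{\mX_t-\mathbf{1}\vpi^\top\mX_t}$ in $\Phi_t$, and (\emph{ii}) the $\Norm{\nabla f(\vw_t)}$ term that enters through Lemma~\ref{lem:consensus_v} is dominated by the $-\eta\Norm{\nabla f(\vw_t)}$ from the descent step. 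For this absorption we need $\rho$ and $\theta\beta^{(t+1)K}$ to be small enough, which is exactly why we set $K=\tilde\Theta(1/(1-\beta))$: by Proposition~\ref{lem:Bounded Diagonals} and the bound $\rho\leq\sqrt{n\kappa}\beta^K$, this makes $\rho\leq 1/2$ and $\theta\sqrt{\kappa}\beta^{(t+1)K}\leq \epsilon/\Norm{\nabla f(\vw_t)}$-scale terms negligible, so that every one-step contraction yields a clean inequality of the form
\[
\BE[\Phi_{t+1}] \leq \BE[\Phi_t] - \frac{\eta}{2}\BE\Norm{\nabla f(\vw_t)} + c_4\eta^2 L + \frac{c_5\eta\sigma}{(nb)^{1-1/p}} + (\text{geometrically decaying terms in }\beta^{(t+1)K}).
\]

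I would then telescope from $0$ to $T-1$; the geometric terms sum to a constant, and $\Phi_0-\Phi_T \leq \Delta + \mathcal{O}(\eta)$ (because the initialisation $\mX_0=\vone\bx_0$ makes the consensus term zero and $\mV_0=\mA^{\hat K}\mD_0^{-1}\mG_0$ with $\hat K=\tilde\Theta(1/(1-\beta))$ controls the initial gradient-tracker error analogously to Lemma~\ref{lem:Estimate descent deviation}). Dividing by $T\eta$ gives
\[
\frac{1}{T}\sum_{t=0}^{T-1}\BE\Norm{\nabla f(\vw_t)} \leq \frac{2\Delta}{T\eta} + c_4\eta L + \frac{c_5\sigma}{(nb)^{1-1/p}} + \mathcal{O}(\epsilon),
\]
and plugging in $\eta=\Theta(\epsilon/L)$, $T=\Theta(L\Delta/\epsilon^2)$, $b=\Theta(n^{-1}\sigma^{p/(p-1)}\epsilon^{-p/(p-1)})$ drives each term to $\mathcal{O}(\epsilon)$. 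To translate from $\vw_t$ to the output $\hat\vx_i$, I use the smoothness bound $\Norm{\nabla f(\hat\vx_i)-\nabla f(\vw_t)}\leq L\Norm{\vx_{t,i}-\vw_t}\leq L\Norm{\mX_t-\vone\vpi^\top\mX_t}$, whose expectation is already controlled by the telescoped bound on the $\mX$-consensus term. Finally, counting $Tb$ samples per agent (so $nTb$ in total) and $TK$ rounds of multi-consensus gives the stated complexities.

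The main obstacle will be step two: choosing the Lyapunov weights so that the $\Norm{\mX_t-\mathbf{1}\vpi^\top\mX_t}$ and $\Norm{\nabla f(\vw_t)}$ terms introduced by Lemma~\ref{lem:consensus_v} are simultaneously absorbed without inflating the coefficient of the descent inequality. Because Lemma~\ref{lem:consensus_v} contains a factor $\theta L(1+\rho+2n^2\kappa^{3/2}\beta^{(t+1)K})$ multiplying the $\mX$-consensus and a factor $2\theta n^{5/2}\kappa^{3/2}\beta^{(t+1)K}$ multiplying $\Norm{\nabla f(\vw_t)}$, the requirement $\rho\cdot(\text{these prefactors})\ll 1$ fixes $K$ up to logarithmic factors in $n$, $\kappa$, and $(1-\beta)^{-1}$, producing exactly the $K=\tilde\Theta(1/(1-\beta))$ claimed; verifying that all secondary terms can be written as $\mathcal{O}(\epsilon)$ under this choice is the technically delicate part of the argument.
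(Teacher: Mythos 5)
Your proposal follows essentially the same route as the paper's proof: the identical Lyapunov function, substitution of Lemmas~\ref{lem:Estimate descent deviation}--\ref{lem:consensus_v} into Lemma~\ref{lem:descent-mean}, absorption of the consensus and gradient terms by taking $K=\tilde\Theta(1/(1-\beta))$ so that $\rho$ and $\theta\sqrt{\kappa}\beta^{(t+1)K}$ are small, control of the initial tracker error via $\hat K$, telescoping, and the same parameter choices and complexity counting. The only small difference is the final step from $\vw_t$ to $\hat\vx_i$: the paper does not read the consensus error off the telescoped Lyapunov bound (whose weights were chosen to absorb it exactly, leaving no slack) but instead unrolls Lemma~\ref{lem:consensus_x} to get the uniform bound $\Norm{\mX_t-\vone\vpi^\top\mX_t}\leq 2\sqrt{n}\eta\rho/(1-\rho)$ and imposes the mild extra condition $\rho\leq\epsilon/(\epsilon+4n\eta L)$ on $K$, a detail you would need to add but which does not change the approach.
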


\section{The Optimality and Discussion}
In fact, Theorem \ref{thm:main} indicates Algorithm \ref{alg:PDNSGD} has the optimal complexity on both computation and communication.
We start our discussion with the following proposition.

\begin{prop}[{\citet[Theorem 3.3]{liu2024nonconvex}}]
\label{prop:main-lower-bound}
For any given $L,\sigma,\Delta>0$, $p\in\left(1,2\right]$, and sufficient small $\epsilon>0$,
there exist a function $h:\BR^d\to\BR$ which is $L$-smooth and lower bounded by 
$h(\vx_0)-\inf_{\vx\in\BR^d}h(\vx)\leq\Delta$, 
and a stochastic first-order oracle that can draw $\vxi\sim\fD$ and return $H(\vx;\vxi)$ satisfying $\BE[\nabla H(\vx;\vxi)] = \nabla h(\vx)$ and $\BE\left[\Norm{\nabla H(\vx;\vxi) - \nabla h(\vx)}^p\right] \leq \sigma^p$ for given $\vx\in\BR^d$.
In order to find an $\epsilon$-stationary point $\hat\vx$, i.e., $\BE\left[\left\Vert \nabla h(\hat\vx)\right\Vert \right]\leq\epsilon$,
any stochastic first-order oracle algorithm starting from the point $\vx_0\in\BR^d$ requires at least $\Omega(L\sigma^{\frac{p}{p-1}}\epsilon^{-\frac{3p-2}{p-1}}\Delta)$ stochastic first-order oracle calls.
\end{prop}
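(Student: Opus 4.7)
Since Proposition~\ref{prop:main-lower-bound} is restated directly from \citet{liu2024nonconvex}, my plan is not to reproduce the full construction but to sketch the standard strategy one would follow to establish such a stochastic first-order lower bound. The idea is to combine a deterministic zero-chain hard instance with a resisting heavy-tailed noise oracle and multiply the two complexity contributions, since $(3p-2)/(p-1)=2+p/(p-1)$.

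First, I would invoke a Carmon--Duchi--Hinder--Sidford type ``zero-chain'' construction: an $L$-smooth function $h:\BR^d\to\BR$ of the form $h(\vx)=\sum_{k=1}^{N}\psi(x_k,x_{k+1})$ with $h(\vx_0)-\inf h\leq \Delta$, whose structure forces any zero-respecting first-order algorithm to unlock one coordinate at a time. With $N=\Theta(L\Delta/\epsilon^2)$ pieces, this alone yields a deterministic $\Omega(L\Delta/\epsilon^2)$ query lower bound. A zero-respecting reduction (after orthogonal randomization of the coordinate basis) extends it to arbitrary first-order algorithms.

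Next, I would couple the construction with a noise oracle designed to hide the informative signal under the $p$-BCM budget. A natural choice is the Bernoulli-scaled noise
\begin{align*}
H(\vx;\vxi)=\nabla h(\vx)+\vxi,\qquad \vxi=\frac{\sigma}{q^{1/p}}\,\ve\cdot B,
\end{align*}
where $B\sim\mathrm{Bernoulli}(q)$ and $\ve$ is chosen adversarially (symmetrized as needed) so that $\BE[\vxi]=\vzero$ and $\BE[\Norm{\vxi}^p]\leq \sigma^p$. With probability $1-q$ the sample is essentially uninformative (the spike is absent and the gradient is masked by a designed bias), while with probability $q$ the spike of magnitude $\sigma/q^{1/p}$ drowns out the signal. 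Choosing $q=\Theta((\epsilon/\sigma)^{p/(p-1)})$ balances these scales so that $\Theta((\sigma/\epsilon)^{p/(p-1)})$ queries are required per stage to resolve the next chain coordinate to accuracy $\epsilon$. Multiplying by the $\Theta(L\Delta/\epsilon^2)$ stages produces the claimed $\Omega(L\sigma^{p/(p-1)}\epsilon^{-(3p-2)/(p-1)}\Delta)$ bound.

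The main obstacle is making the ``resisting adversary'' step rigorous. One typically needs a Le Cam or Fano style hypothesis test showing that no algorithm can reliably distinguish the two candidate gradients underlying consecutive stages from fewer than $\Theta((\sigma/\epsilon)^{p/(p-1)})$ samples. The heavy-tailed regime is more delicate than the bounded variance case because the spike mass $q$ must be balanced against the $p$-th moment, and the standard KL-divergence bounds used for Gaussian noise do not apply directly; one has to control the total-variation distance between the two Bernoulli-spike mixtures and invoke the divide-and-conquer structure of the chain to aggregate the per-stage lower bound into the global one. This is the technical core carried out in \citet{liu2024nonconvex}, and any clean reproduction would follow their coupling argument on the oracle transcripts.
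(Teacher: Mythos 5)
The paper does not prove this proposition at all: it is imported verbatim from \citet[Theorem 3.3]{liu2024nonconvex} (and then used only via the hard-instance reduction $f_i=h$, $F_i=H$ discussed after it), so there is no in-paper proof to compare against. Your sketch correctly identifies the strategy behind the cited result — a zero-chain construction of length $\Theta(L\Delta/\epsilon^2)$ composed with a resisting stochastic oracle whose informative content appears only with probability $q=\Theta((\epsilon/\sigma)^{p/(p-1)})$, giving the product bound since $\tfrac{3p-2}{p-1}=2+\tfrac{p}{p-1}$ — and you are right that the technical core is the per-stage indistinguishability argument carried out in the cited work. Two caveats: as written this is a plan, not a proof, with the decisive step explicitly deferred; and your additive ``spike that drowns out the signal'' mechanism is a slight mischaracterization of the standard construction, where the relevant gradient component is multiplied by a $\mathrm{Bernoulli}(q)/q$ factor so that it is simply absent (not masked by a large perturbation) with probability $1-q$, unbiasedness is automatic, and the $p$-th moment constraint $q^{1-p}\epsilon^p\lesssim\sigma^p$ is what forces $q\gtrsim(\epsilon/\sigma)^{p/(p-1)}$; with that mechanism the per-stage cost follows from a geometric-waiting-time argument rather than a delicate Le Cam bound. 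Since the proposition is a quoted external theorem, deferring to \citet{liu2024nonconvex} is exactly what the paper itself does, and your sketch is consistent with that source.
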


According to Proposition \ref{prop:main-lower-bound}, we can construct a hard instance for our decentralized setting by taking $f_i=h$, and $F_i=H$ for all $i\in[n]$, then the lower bound of $\Omega(L\sigma^{\frac{p}{p-1}}\epsilon^{-\frac{3p-2}{p-1}}\Delta)$ on the stochastic first-order oracle complexity also applies to decentralized stochastic nonconvex optimization under the $p$-BCM assumption. This implies Algorithm \ref{alg:PDNSGD} has the optimal sample complexity.

For the communication complexity, we consider following result with respect to the noiseless local gradient oracle.

\begin{prop}[{\citet[Appendix A.2]{liang2025rowstochastic}}]\label{prop:main-lower-bound2}
For any given $L,\Delta>0$, $n\geq2$, and $\beta\in[0.01, 1-1/n]$, there exists 
a set of $L$-smooth functions $\{f_i:\BR^d\to\BR\}_{i=1}^n$ such that $f(\vx_0)-\inf_{\vx\in\BR^d}f(\vx)\leq\Delta$ with $f=\frac{1}{n}\sum_{i=1}^n f_i$ and a row-stochastic matrix $\mA\in\BR^{n\times n}$ with $\ln\kappa=\Omega(n(1-\beta))$.
In order to find an $\epsilon$-stationary point $\hat\vx$, \mbox{i.e., $\|\nabla f(\hat\vx)\|\leq\epsilon$},
any decentralized exact first-order methods over the network associated with the row-stochastic matrix $\mA$ requires at least 
$\Omega(L(1-\beta)^{-1}\epsilon^{-2}\Delta\ln\kappa)$ communication rounds. 
\end{prop}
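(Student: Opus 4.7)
The plan is to construct a hard instance consisting of a pathological row-stochastic matrix paired with a distributed zero-chain nonconvex objective, and to lower bound the required communication rounds by an information-propagation argument. First I would fix the topology as a directed cycle $1 \to 2 \to \cdots \to n \to 1$ with asymmetric weights, where each forward edge carries weight $c$ and each self-loop carries weight $1-c$, tuning $c$ as a function of the target $\beta$. Standard Perron--Frobenius algebra on this circulant-like matrix yields a closed-form, geometrically-decreasing stationary vector $\vpi$, and the resulting row-stochastic $\mA$ can be arranged to satisfy simultaneously $\|\mA-\vone\vpi^\top\|_{\vpi}=\beta$ and $\ln\kappa = \Theta(n(1-\beta))$.

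Next I would adopt the Carmon--Duchi--Hinder--Sidford zero-chain construction for the function side. The global objective $f=\tfrac{1}{n}\sum_i f_i$ is a sum of link terms, each coupling two consecutive coordinates of a high-dimensional iterate, so that coordinate $j$ receives zero gradient signal until coordinates $1,\dots,j-1$ have been activated. I would assign the link coupling coordinates $j$ and $j+1$ exclusively to agent $j\,(\mathrm{mod}\,n)$, forcing any algorithm to propagate updates around the cycle in order to advance the activation frontier. The standard zero-chain argument then implies that reaching an $\epsilon$-stationary point requires activating $N=\Theta(L\Delta/\epsilon^{2})$ coordinates.

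The crux is bounding the per-activation communication cost. For each agent $i$ and iteration $K$, I would define $J_i^K$ as the largest coordinate index on which $\vx_i$ at round $K$ can be nontrivial, and argue inductively that $\max_i J_i^K$ grows by no more than $\Theta((1-\beta)/\ln\kappa)$ per communication round. The intuition is twofold: row-stochasticity prohibits Chebyshev-style gossip acceleration, forcing a base propagation rate of $1-\beta$ per round rather than $\sqrt{1-\beta}$; and the skewed equilibrium forces information flowing from high-weight to low-weight nodes to overcome a multiplicative factor of $\kappa$ in the $\vpi$-weighted norm, which costs an additional $\Theta(\ln\kappa)$ rounds at the base rate. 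Multiplying the $N$ activations by $\Theta(\ln\kappa/(1-\beta))$ rounds per activation yields the target $\Omega(L\Delta\ln\kappa/((1-\beta)\epsilon^{2}))$ communication lower bound.

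The main obstacle is rigorously establishing this propagation rate. Concretely, one must show that $\bigl|[\mA^K]_{ij}-\pi_j\bigr|$ remains negligibly small whenever $K \lesssim d(i,j)\cdot\ln\kappa/(1-\beta)$, with $d(i,j)$ the directed hop distance in the cycle. I would approach this via a $\vpi$-weighted potential tracking how probability mass starting at node $i$ leaks relative to $\vpi$, coupled with an invariance lemma for exact decentralized first-order methods asserting that $\vx_i$ at round $K$ lies in the column span of iterated products $\mA^{K-k}\nabla\mF(\mX_k)$ for $k\leq K$, so that a genuinely new coordinate can only appear in $\vx_i$ at the rate at which the reach of $\mA^K$ expands against the equilibrium. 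Combining this spatial reach bound with the zero-chain activation structure completes the proof.
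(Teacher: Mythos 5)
Note first that the paper does not prove this proposition; it is imported verbatim from \citet[Appendix A.2]{liang2025rowstochastic}, so your proposal can only be judged against the intended construction there, which is of the same general flavor (zero-chain objective plus a skewed row-stochastic ring) but differs from yours at the two places where your argument actually breaks.

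The first gap is the matrix construction. A directed ring in which \emph{every} node has forward weight $c$ and self-loop $1-c$ is circulant, hence doubly stochastic: its equilibrium vector is $\vone/n$, so $\kappa=1$ and $\ln\kappa=0$, not a ``geometrically-decreasing stationary vector.'' Moreover, since this matrix is normal, its generalized spectral gap is $1-\beta=\Theta\bigl(c(1-c)/n^2\bigr)$, so you cannot simultaneously have $\beta$ constant (as required by $\beta\in[0.01,1-1/n]$) and diameter $n$; you recover at best the doubly-stochastic scaling $n\asymp(1-\beta)^{-1/2}$. The whole content of the row-stochastic lower bound is that node-dependent weights producing an exponentially skewed equilibrium (e.g., $\pi_i\propto\gamma^i$, so $\ln\kappa=\Theta(n)$ while the gap stays constant) allow the diameter to be as large as $\Theta\bigl(\ln\kappa/(1-\beta)\bigr)$; with uniform weights this mechanism is absent and the statement $\ln\kappa=\Omega(n(1-\beta))$ is unattainable.

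The second gap is the propagation rate. For \emph{exact} first-order methods the only obstruction to information flow is the zero pattern of $\mA^K$: any nonzero entry, however tiny, transmits a coordinate, so magnitude estimates of $\bigl|[\mA^K]_{ij}-\pi_j\bigr|$ cannot force more rounds than the directed hop distance, and your proposed lemma (``negligible whenever $K\lesssim d(i,j)\ln\kappa/(1-\beta)$'') is not a valid obstruction. Worse, under your own assignment (link $j$ at agent $j\bmod n$, edges $j\to j+1$) consecutive links sit at adjacent agents along a forward edge, so the newly activated coordinate reaches the next responsible agent after a single multiplication by $\mA$; the per-activation cost is $O(1)$ and your argument yields only $\Omega(L\Delta\epsilon^{-2})$, with no $\ln\kappa/(1-\beta)$ factor. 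The factor has to come from graph distance itself: take the skewed matrix above with diameter $n=\Theta\bigl(\ln\kappa/(1-\beta)\bigr)$ and place consecutive zero-chain links at agents $\Theta(n)$ hops apart (e.g., alternate between two antipodal agents, or assign against the edge orientation), so that each of the $N=\Theta(L\Delta/\epsilon^2)$ activations costs $\Omega(n)$ communication rounds; multiplying then gives $\Omega\bigl(L\Delta\ln\kappa/((1-\beta)\epsilon^2)\bigr)$. Your high-level skeleton (hard ring instance, zero-chain, activations times rounds-per-activation) is the right one, but both the instance and the rounds-per-activation bound need to be replaced as described for the proof to go through.
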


It is worth noting that the exact local first-order oracle also satisfied our $p$-BCM condition for any $p\in(1,2]$ and $\sigma>0$ by taking Assumption~\ref{asm:pBCM} with $\nabla F_i(\vx;\vxi) = \nabla f_i(\vx)$.
Therefore, the lower bound of $\Omega(L(1-\beta)^{-1}\epsilon^{-2}\Delta\ln\kappa)$ as shown in Proposition \ref{prop:main-lower-bound2} also applies to our decentralized stochastic first-order nonconvex optimization under
the $p$-BCM assumption.
This implies Algorithm \ref{alg:PDNSGD} has the near-optimal communication complexity.

In the special case of $p=2$, a recent work \citep{liang2025rowstochastic} claims MG-\pulldiag-GT method attains the near-optimal complexity of $\tilde{\fO}(L \sigma^{2}\epsilon^{-4}\Delta+L(1-\beta)^{-1}\epsilon^{-2}\Delta)$.
However, their upper bound analysis does not distinguish between the sample complexity and the communication complexity.
In fact, both the sample complexity and the communication complexity~of MG-\pulldiag-GT \citep[Algorithm 3]{liang2025rowstochastic} for achieving an $\epsilon$-stationary point are $\tilde{\fO}(L\sigma^{2}\epsilon^{-4}\Delta+L (1-\beta)^{-1}\epsilon^{-2}\Delta)$, which are strictly worse than those in our Theorem \ref{thm:main} with $p=2$.

\section{Results for Undirected Networks}\label{sec:undirected}
This section follows the design and analysis in Section \ref{sec:main} to study the problem over undirected network $\fG=(\fV,\fE)$, where 
the node set $\fV=[n]$ contains all $n$ agents 
and the edge set $\fE=\{(i,j):\text{the agents $i$ and $j$ are connected}\}$ describes the connectivity. 
Compared with the settings in Section \ref{sec:pre}, we suppose the mixing matrix $\mA\in\BR^{n\times n}$ be doubly-stochastic, i.e.,
we strength Assumption \ref{asm:primitive} as follows \citep{ye2023multi,yuan2022revisiting,lu2021optimal}.

\begin{asm}\label{asm:primitive2}
    The mixing matrix $\mA=[a_{ij}]\in\BR^{n\times n}$ associated with the undirected network $\fG=(\fV,\fE)$ is symmetric. It satisfies $a_{ij} \in (0,1]$ if $(i,j) \in \mathcal{E}$ or~$i=j$, and~$a_{ij}=0$ otherwise. In addition, it holds $\mathbf{0} \preceq \mA \preceq \mI$, $\mA^\top \boldsymbol{1} = \mA \boldsymbol{1} = \boldsymbol{1}$, and $\operatorname{null}(\mI-\mA)=\operatorname{span}(\boldsymbol{1})$.
\end{asm}

In the view of Proposition \ref{prop:Perron-Frobenius} and Assumption \ref{asm:primitive2},
we set 
$\vpi=\frac{1}{n}\vone$
throughout this section. 
We use the standard gradient tracking to replace the matrix set. This implies the skewness $\kappa=1$, so that the diagonal matrix $\mD_t$ in Algorithm \ref{alg:PDNSGD} can be set to the identity matrix for the undirected network.

Consequently, the vector $\vw_t$ reduces to the mean vector, i.e.,
\begin{align*}
    \vw_t = \frac{1}{n}\sum_{i=1}^n \vx_{t,i}:=\bar\vx_t,
\end{align*}
and the parameter $\beta$ reduce to
$\beta= \big\|\mA - \frac{1}{n}\boldsymbol{1}\boldsymbol{1}^\top\big\|_2$.

Under Assumption \ref{asm:primitive2}, the communication described by multiplication $\mA^K$ shown in equation (\ref{eq:Zk}) can be improved by introducing Chebyshev acceleration \cite{liu2011accelerated,arioli2014chebyshev,ye2023multi} for the undirected network.
We formally present this subroutine in Algorithm~\ref{alg:fm}, which has the following property.

\begin{prop}[{\citet{ye2023multi}}]\label{prop:Chebyshev acceleration}
Under Assumption \ref{asm:primitive2}, the output of Algorithm \ref{alg:fm} holds $\frac{1}{n}\boldsymbol{1}^\top \mY_K = \bar{\mY}_0$ and
\begin{align*}
    &\Norm{\mY_K-\boldsymbol{1}\bar{\vy}_0}\leq c_1\big(1-c_2 \sqrt{1-\beta}\,\big)^K\Norm{\mY_0-\boldsymbol{1}\bar{\vy}_0}
\end{align*}
where $\bar{\vy}^{(0)}=\frac{1}{n}\boldsymbol{1}^\top \mY^{(0)}, ~~c_1=\sqrt{14}, ~~and~~c_2=1-1/\sqrt{2}$.
\end{prop}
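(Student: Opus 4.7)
The plan is to follow the classical analysis of Chebyshev-accelerated gossip. Because Assumption~\ref{asm:primitive2} makes $\mA$ symmetric, doubly stochastic, and positive semidefinite, the matrix iteration in Algorithm~\ref{alg:fm} can be reduced to a scalar analysis on the spectrum of $\mA$ via Chebyshev polynomials, and the two claims split cleanly into a mean-preservation part and a contraction part.

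First I would verify the mean-preservation claim $\frac{1}{n}\mathbf{1}^\top\mY_k=\bar{\vy}_0$ by induction on $k$. The Chebyshev recursion in Algorithm~\ref{alg:fm} expresses $\mY_{k+1}$ as an affine-in-$\mA$ combination of $\mA\mY_k$, $\mY_k$, and $\mY_{k-1}$ whose coefficients are consistent with fixing $\mathbf{1}\bar{\vy}_0$ (otherwise the scheme would not have the correct fixed point). Multiplying on the left by $\frac{1}{n}\mathbf{1}^\top$ and using $\mathbf{1}^\top\mA=\mathbf{1}^\top$ then carries the invariant from step $k$ to $k+1$, establishing the first half of the proposition.

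Next I would establish a polynomial representation of the error. Define $\mE_k:=\mY_k-\mathbf{1}\bar{\vy}_0$. Since $\mA(\mathbf{1}\bar{\vy}_0)=\mathbf{1}\bar{\vy}_0$, subtracting $\mathbf{1}\bar{\vy}_0$ from the recursion produces the same recursion in $\mE_k$, so $\mE_K=P_K(\mA)\mE_0$ for a degree-$K$ polynomial $P_K$ determined by the Chebyshev coefficients of the subroutine, normalized so that $P_K(1)=1$. Expanding $\mE_0$ in the eigenbasis of $\mA$ and using $\mathbf{1}^\top\mE_0=0$, only eigencomponents associated with eigenvalues in $[0,\beta]$ survive; here I use that $\mathbf{0}\preceq\mA$ and that $\beta=\|\mA-\frac{1}{n}\mathbf{1}\mathbf{1}^\top\|_2$. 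Hence $\Norm{\mE_K}\le\max_{\lambda\in[0,\beta]}|P_K(\lambda)|\cdot\Norm{\mE_0}$. Choosing the shifted Chebyshev polynomial $P_K(\lambda)=T_K\bigl((2\lambda-\beta)/\beta\bigr)/T_K\bigl((2-\beta)/\beta\bigr)$ gives $\max_{\lambda\in[0,\beta]}|P_K(\lambda)|\le 1/T_K\bigl((2-\beta)/\beta\bigr)$. Setting $\gamma:=\sqrt{1-\beta}$ and using the identity $T_K(x)=\tfrac12\bigl((x+\sqrt{x^2-1})^K+(x-\sqrt{x^2-1})^K\bigr)$ together with $x+\sqrt{x^2-1}=(1+\gamma)/(1-\gamma)$ at $x=(2-\beta)/\beta$, this bound collapses to $2\bigl((1-\gamma)/(1+\gamma)\bigr)^K$, which already gives geometric decay at rate $1-\Omega(\sqrt{1-\beta})$.

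The main obstacle is matching this clean bound to the explicit constants $c_1=\sqrt{14}$ and $c_2=1-1/\sqrt{2}$ stated in the proposition. These constants reflect the specific normalization in Algorithm~\ref{alg:fm}: the subroutine of \citet{ye2023multi} is typically designed to operate with only an upper estimate of $\beta$ (rather than $\beta$ itself), which forces the use of slightly suboptimal Chebyshev parameters and introduces a multiplicative safety factor; this is the origin of $\sqrt{14}$. Likewise, replacing $\tfrac{2\gamma}{1+\gamma}$ by a lower bound convenient for composition with the outer algorithm produces the reduction of the rate exponent from $1$ to $1-1/\sqrt{2}$. Once the exact parameterization of Algorithm~\ref{alg:fm} is identified, these constants follow from elementary but somewhat tedious bookkeeping; I expect this constant-tracking to be the only non-mechanical step, and I would alternatively simply cite \citet{ye2023multi} to avoid reproducing it.
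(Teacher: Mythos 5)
The paper itself does not prove this proposition: it is imported verbatim from \citet{ye2023multi}, so your fallback option of simply citing that reference is exactly what the authors do. Judged as a proof sketch, your first part is fine: the mean-preservation claim follows by induction because the recursion coefficients sum to one, $(1+\eta_z)-\eta_z=1$, the base case uses $\mZ_{-1}=\mZ_0$, and $\mathbf{1}^\top\mA=\mathbf{1}^\top$ pushes the invariant forward. Likewise, reducing the error to $\mE_K=P_K(\mA)\mE_0$ with $P_{k+1}(\lambda)=(1+\eta_z)\lambda P_k(\lambda)-\eta_z P_{k-1}(\lambda)$, $P_0=P_{-1}=1$, $P_K(1)=1$, and restricting attention to eigenvalues in $[0,\beta]$ (using $\mathbf{0}\preceq\mA$ and $\operatorname{null}(\mI-\mA)=\operatorname{span}(\mathbf{1})$) is correct.

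The genuine gap is in the step where you ``choose'' $P_K(\lambda)=T_K\bigl((2\lambda-\beta)/\beta\bigr)/T_K\bigl((2-\beta)/\beta\bigr)$. You do not get to choose the polynomial: Algorithm~\ref{alg:fm} runs a \emph{stationary} second-order (heavy-ball type) iteration with the fixed momentum $\eta_z=(1-\sqrt{1-\beta^2})/(1+\sqrt{1-\beta^2})$, which is not the Chebyshev semi-iterative method with iteration-varying weights, and the polynomial it actually generates is not the minimax shifted Chebyshev polynomial. Consequently the bound $\max_{\lambda\in[0,\beta]}|P_K(\lambda)|\le 1/T_K\bigl((2-\beta)/\beta\bigr)$, and the resulting rate $2\bigl((1-\gamma)/(1+\gamma)\bigr)^K$, do not follow from your argument as written. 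The analysis that yields the stated constants instead stacks the iterates $[\mZ_{k+1};\mZ_k]$, decomposes along the eigenbasis of $\mA$ into $2\times 2$ companion blocks whose eigenvalues for $\lambda\in[0,\beta]$ are complex of modulus $\sqrt{\eta_z}$, and bounds the powers of these non-normal blocks through an explicit similarity transform; the factor $c_1=\sqrt{14}$ is the condition-number price of that transform and $c_2=1-1/\sqrt{2}$ comes from lower-bounding $1-\sqrt{\eta_z}$ by a multiple of $\sqrt{1-\beta}$. Your explanation of the constants (a safety factor for using only an upper estimate of $\beta$, and a convenient lower bound on $2\gamma/(1+\gamma)$) misattributes their origin, so the ``constant-tracking'' you defer is not mere bookkeeping on top of your Chebyshev bound but a structurally different contraction argument. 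Either carry out that companion-matrix analysis or, as the paper does, cite \citet{ye2023multi} for the statement.
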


According to Proposition \ref{prop:Chebyshev acceleration}, we define
\begin{align*}
    \tilde\rho := c_1\big(1-c_2\sqrt{1-\beta}\,\big)^K
\end{align*}
which characterize the convergence of Algorithm \ref{alg:fm}.

\begin{algorithm}[t]
	\caption{$\AG(\mZ_0,\mA, K)$} \label{alg:fm}
	\begin{algorithmic}[1]
		\STATE $\mZ_{-1}=\mZ_{0}$;~~~$\eta_z=(1-\sqrt{1-\beta^2}\,)/(1+\sqrt{1-\beta^2}\,)$ \\[0.1cm]
		\STATE \textbf{for} $k = 0, 1, \dots, K-1$ \textbf{do}\\[0.1cm]
		\STATE\quad $\mZ_{k+1}=(1+\eta_z)\mA\mZ_{k}-\eta_z\mZ_{k-1}$ \\[0.1cm]
		\STATE\textbf{end for} \\[0.1cm]
		\STATE \textbf{Output:} $\mZ_{K}$
	\end{algorithmic}
\end{algorithm}

Based on the above discussion, we adapt Algorithm \ref{alg:PDNSGD} to the undirected network by modifying the update rules of the variables $\mX_{t+1}$ and $\mV_{t+1}$ to
\begin{align}\label{eq:modify-XV}
\begin{cases}
\mX_{t+1} = \AG\left(\mX_{t} - \eta\mU_t, \mA, K\right), \\
\mV_{t+1} = \AG\left(\mV_{t} + \mG_{t+1} - \mG_{t}, \mA, K\right).    
\end{cases}
\end{align}
Similarly, we also apply Chebyshev acceleration to set 
\begin{align}\label{eq:modify-X0}
\mV_0 = \AG\big(\mG_0, \mA, \hat K\big).
\end{align}
We present the complete  procedure of Algorithm \ref{alg:PDNSGD} with above modifications in Algorithm \ref{alg:DNSGD} of Appendix \ref{appendix:undirected}.

\begin{remark}
By modifications (\ref{eq:modify-XV}) and (\ref{eq:modify-X0}),
the procedure of Algorithm \ref{alg:PDNSGD} reduces to the recent proposed decentralized normalized stochastic  gradient descent (DNSGD) \citep{luo2025decentralized}.
Note that the motivation and analysis of previous work for DNSGD \citep{luo2025decentralized} is target to solve the stochastic nonconvex problem under the relaxed smoothness and bounded variance assumptions.
Additionally, existing convergence guarantee of DNSGD requires the condition of bounded gradient dissimilarity. 
In contrast, the analysis in the remainder of this section focuses on the settings of standard smooth and heavy-tailed noise.
\end{remark}

We then provide the complexity analysis of the modified algorithm for undirected networks.
We present the descent lemma with respect to the mean vector 
$\bar\vx_t=\frac{1}{n}\sum_{i=1}^n \vx_{t,i}$.

\begin{lem}\label{lem:descent-undirected}
Under Assumptions \ref{asm:smoothness}--\ref{asm:primitive2}, running Algorithm~\ref{alg:PDNSGD} by modifying the updates of $\mX_{t+1}$ and $\mV_{t+1}$ to equations~(\ref{eq:modify-XV}) and (\ref{eq:modify-X0}) holds
\begin{align*}
    &\BE\left[ f(\bar{\vx}_{t+1})\right] \\
    \leq &f(\bar{\vx}_{t})-\eta \Norm{\nabla f(\bar{\vx}_{t})} + \frac{\eta^{2} L}{2}+ \BE\left[2\eta\Norm{\nabla f(\bar{\vx}_{t}) -\frac{1}{n}\boldsymbol{1}^\top \mV_{t}}+ \frac{2\eta}{\sqrt{n}}\Norm{\mV_{t}-\frac{1}{n}\boldsymbol{1}\boldsymbol{1}^\top  \mV_{t}}\right].
\end{align*}
\end{lem}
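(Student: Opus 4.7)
The plan is to mirror the descent analysis of Lemma~\ref{lem:descent-mean}, but exploiting the doubly-stochastic structure: for undirected networks the Chebyshev-accelerated gossip $\AG$ preserves the row-mean (Proposition~\ref{prop:Chebyshev acceleration}), so $\bar\vx_{t+1}=\frac{1}{n}\vone^\top \mX_{t+1}=\bar\vx_t-\eta\bar\vu_t$ where $\bar\vu_t:=\frac{1}{n}\sum_{i=1}^n \vv_{t,i}/\Norm{\vv_{t,i}}$. Because each summand has unit norm, $\Norm{\bar\vu_t}\leq 1$, so $L$-smoothness of $f$ immediately yields
\begin{align*}
f(\bar\vx_{t+1})\leq f(\bar\vx_t)-\eta\langle\nabla f(\bar\vx_t),\bar\vu_t\rangle+\tfrac{L\eta^2}{2}.
\end{align*}

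The crux is to upper bound the inner-product term by $-\Norm{\nabla f(\bar\vx_t)}$ plus the two error quantities in the statement. To this end I would invoke the standard one-line inequality for normalized vectors, valid for any $\vv\neq 0$ and any $\vg$:
\begin{align*}
-\langle \vg,\vv/\Norm{\vv}\rangle=-\Norm{\vv}+\langle \vv-\vg,\vv/\Norm{\vv}\rangle\leq -\Norm{\vv}+\Norm{\vv-\vg}\leq -\Norm{\vg}+2\Norm{\vv-\vg},
\end{align*}
where the final step uses the reverse triangle inequality. Applying this with $\vg=\nabla f(\bar\vx_t)$ and $\vv=\vv_{t,i}$ and averaging over $i$ gives
\begin{align*}
-\langle\nabla f(\bar\vx_t),\bar\vu_t\rangle\leq -\Norm{\nabla f(\bar\vx_t)}+\tfrac{2}{n}\sum_{i=1}^n\Norm{\vv_{t,i}-\nabla f(\bar\vx_t)}.
\end{align*}

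Next I would split each $\vv_{t,i}-\nabla f(\bar\vx_t)$ through the row-mean $\bar\vv_t:=\frac{1}{n}\vone^\top \mV_t$, apply the triangle inequality, and convert the pointwise consensus terms to the Frobenius norm by Cauchy--Schwarz:
\begin{align*}
\tfrac{1}{n}\sum_{i=1}^n\Norm{\vv_{t,i}-\nabla f(\bar\vx_t)}\leq \Norm{\bar\vv_t-\nabla f(\bar\vx_t)}+\tfrac{1}{\sqrt{n}}\Bigl(\sum_{i=1}^n\Norm{\vv_{t,i}-\bar\vv_t}^2\Bigr)^{1/2},
\end{align*}
and the last quantity equals $\tfrac{1}{\sqrt n}\Norm{\mV_t-\tfrac{1}{n}\vone\vone^\top\mV_t}$. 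Substituting and taking expectation produces the claimed bound.

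The only non-routine step is the scalar inequality $-\langle\vg,\vv/\Norm{\vv}\rangle\leq -\Norm{\vg}+2\Norm{\vv-\vg}$; once this is applied agent-wise the rest is just triangle inequality, Jensen/Cauchy--Schwarz, and bookkeeping. I do not expect any obstacle from Chebyshev acceleration itself since only the mean-preservation and unit-norm bound on $\bar\vu_t$ are used here; the actual convergence rate $\tilde\rho$ of $\AG$ enters only later when bounding the two residual terms via consensus recursions analogous to Lemmas~\ref{lem:consensus_x}--\ref{lem:consensus_v}, which are outside the scope of this lemma.
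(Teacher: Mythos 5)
Your proposal is correct and follows essentially the same route as the paper's proof: mean-preservation of $\AG$ to get $\bar\vx_{t+1}=\bar\vx_t-\tfrac{\eta}{n}\vone^\top\mU_t$, $L$-smoothness with the unit-norm bound on the averaged direction, the normalized-vector inequality (your one-line derivation is exactly Lemma~\ref{lem: baseineq}) applied agent-wise, and then a triangle-inequality split through $\tfrac{1}{n}\vone^\top\mV_t$ with Cauchy--Schwarz converting the consensus term to $\tfrac{1}{\sqrt n}\Norm{\mV_t-\tfrac{1}{n}\vone\vone^\top\mV_t}$. No gaps.
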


Note that the form of Lemma \ref{lem:descent-undirected} is different from that of Lemma \ref{lem:descent-mean}, since the term of $\BE\left[\Norm{\nabla f(\bar{\vx}_{t}) -\frac{1}{n}\boldsymbol{1}^\top \mV_{t}}\right]$ is not identical to the term of $n^{-1}\BE\left[\Norm{n\nabla f(\vw_{t}) - \vpi^{\top} \mV_{t}}\right]$ in Lemma \ref{lem:descent-mean} even by taking $\vw_t=\bar\vx_t$ and $\vpi=\frac{1}{n}\vone$.
This difference comes from the modification $\mD_t=\mI$ in the update of $\mV_{t+1}$, which is unnecessary to address the skewness for $\vpi=\frac{1}{n}\vone$.
Nevertheless, the following lemma also shows that $\BE\left[\Norm{\nabla f(\bar{\vx}_{t}) -\frac{1}{n}\boldsymbol{1}^\top \mV_{t}}\right]$ is dominated by the term of $\Theta(\sigma/((nb)^{1-1/p}))$, where the factor $nb$ indicates the linear speedup for undirected networks.

\begin{lem}\label{lem:Estimate descent deviation undirected}
    Under the setting of Lemma \ref{lem:descent-undirected}, we have
    \begin{align*}
        \BE\left[\Norm{\nabla f(\bar{\vx}_{t}) -\frac{1}{n}\boldsymbol{1}^\top \mV_{t}}\right] \leq \frac{L}{\sqrt{n}}\Norm{\mX_t-\frac{1}{n}\boldsymbol{1}\boldsymbol{1}^\top\mX_t} + \frac{8\sigma}{(nb)^{1-1/p}}
    \end{align*}
\end{lem}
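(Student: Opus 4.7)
The plan is to exploit the mean-preserving property of the Chebyshev acceleration subroutine (Proposition~\ref{prop:Chebyshev acceleration}) to reduce the left-hand side to an ordinary stochastic-gradient average, and then bound this average using $L$-smoothness together with a heavy-tailed moment inequality.

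First, I would establish the standard gradient tracking invariant $\frac{1}{n}\boldsymbol{1}^\top \mV_t = \frac{1}{n}\boldsymbol{1}^\top \mG_t$ by induction on $t$. Proposition~\ref{prop:Chebyshev acceleration} implies $\AG(\cdot,\mA,K)$ preserves the column-mean, so the base case follows from $\mV_0 = \AG(\mG_0,\mA,\hat K)$, and the inductive step follows from $\mV_{t+1}=\AG(\mV_t+\mG_{t+1}-\mG_t,\mA,K)$. Consequently,
\begin{align*}
\nabla f(\bar{\vx}_t) - \frac{1}{n}\boldsymbol{1}^\top \mV_t
= \underbrace{\frac{1}{n}\sum_{i=1}^n \bigl(\nabla f_i(\bar{\vx}_t)-\nabla f_i(\vx_{t,i})\bigr)}_{\text{(I)}}
+ \underbrace{\frac{1}{nb}\sum_{i=1}^n\sum_{k=1}^b \bigl(\nabla f_i(\vx_{t,i})-\nabla F_i(\vx_{t,i};\vxi_{t,i,k})\bigr)}_{\text{(II)}}.
\end{align*}

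Second, I would bound (I) using $L$-smoothness (Assumption~\ref{asm:smoothness}) together with Cauchy--Schwarz:
\begin{align*}
\|\text{(I)}\| \leq \frac{L}{n}\sum_{i=1}^n \|\vx_{t,i}-\bar{\vx}_t\| \leq \frac{L}{\sqrt{n}}\,\Bigl\|\mX_t-\tfrac{1}{n}\boldsymbol{1}\boldsymbol{1}^\top\mX_t\Bigr\|,
\end{align*}
which matches the first term of the target bound.

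Third, for (II), conditioning on $\mX_t$, the $nb$ summands are independent mean-zero random vectors with $p$-th central moment bounded by $\sigma^p$ (Assumption~\ref{asm:pBCM}). I would invoke a vector Marcinkiewicz--Zygmund/Nemirovski type moment inequality: for independent mean-zero vectors $\vxi_1,\dots,\vxi_N$ with $\BE\|\vxi_j\|^p \le \sigma^p$ and $p\in(1,2]$, one has $\BE\bigl\|\sum_j \vxi_j\bigr\|^p \le C_p N\sigma^p$ with $C_p\le 2^p\le 4$. Applying this with $N=nb$ and invoking Jensen's inequality gives
\begin{align*}
\BE\bigl[\|\text{(II)}\|\bigr] \leq \Bigl(\BE\bigl[\|\text{(II)}\|^p\bigr]\Bigr)^{1/p} \leq \frac{C_p^{1/p}\,\sigma}{(nb)^{1-1/p}} \leq \frac{8\sigma}{(nb)^{1-1/p}},
\end{align*}
where the numerical constant is absorbed into the displayed $8$ (this matches the constant obtained for the analogous term in Lemma~\ref{lem:Estimate descent deviation}). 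Combining with the bound on (I) via the triangle inequality and taking expectations yields the claim.

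The main obstacle is step three: one needs the correct heavy-tailed moment inequality for \emph{vector-valued} sums (so $C_p$ does not blow up with dimension) and needs to verify that the combined mini-batch and cross-agent averaging yields the effective sample size $nb$ rather than just $n$ or $b$. Crucially, because $\frac{1}{n}\boldsymbol{1}^\top \mV_t = \frac{1}{n}\boldsymbol{1}^\top \mG_t$, all $nb$ samples appear symmetrically in a single average, so the $nb$ factor—hence the linear speedup—is immediate once the conservation identity is established. If we had to work with $\vpi$-weighted averages as in Lemma~\ref{lem:Estimate descent deviation}, an extra $\sqrt{\kappa}$-type factor would appear, but this is avoided here by $\vpi=\tfrac{1}{n}\boldsymbol{1}$ and $\mD_t=\mI$.
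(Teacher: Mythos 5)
Your proposal is correct and follows the same overall structure as the paper's proof: establish the tracking invariant $\frac{1}{n}\boldsymbol{1}^\top\mV_t=\frac{1}{n}\boldsymbol{1}^\top\mG_t$ via the mean-preservation of $\AG$, split the error into a smoothness/consensus part and a pure noise part, and bound the first part exactly as in the paper's term $A_1$ (equation (\ref{eq:A_1_undirected})). The only place you diverge is the noise term: the paper bounds it in two nested stages, first controlling each agent's mini-batch deviation via Lemma~\ref{lem:heavy-tailed} and then summing across agents via Lemma~\ref{lem:heavy-tailed_cite} and Jensen's inequality (equation (\ref{eq:A_2_undirected})), whereas you treat all $nb$ conditionally independent, mean-zero summands symmetrically in a single application of a vector von Bahr--Esseen/Marcinkiewicz--Zygmund inequality followed by Jensen. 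Both routes give the same $(nb)^{1-1/p}$ denominator and a constant within the stated $8$; your one-shot argument is arguably cleaner and makes the linear-speedup factor $nb$ more transparent, and note that you do not even need an external inequality---applying the paper's own Lemma~\ref{lem:heavy-tailed_cite} directly to the $nb$ summands (conditionally on $\mX_t$, then using the tower property) plus Jensen yields $\BE\bigl[\Norm{\text{(II)}}\bigr]\leq 2\sqrt{2}\,\sigma/(nb)^{1-1/p}$, which is even slightly sharper than the constant you quote. The conditioning step you flag is indeed the only point requiring care, and your handling of it is sound since the samples $\vxi_{t,i,k}$ are drawn independently of $\mX_t$.
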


We then establish the recursion for the consensus errors.

\begin{lem}\label{lem:consensus_x_undirected}
Under the setting of Lemma \ref{lem:descent-undirected}, we have
\begin{align*}\small
\begin{split}
    \Norm{\mX_{t+1}-\frac{1}{n}\mathbf{1}\boldsymbol{1}^{\top} \mX_{t+1}}\leq\tilde\rho\left(\Norm{\mX_{t}-\frac{1}{n}\mathbf{1}\boldsymbol{1}^{\top} \mX_{t}}+\sqrt{n}\eta\right).
\end{split}
\end{align*}
\end{lem}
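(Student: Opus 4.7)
The plan is to apply the contraction property of the Chebyshev acceleration subroutine $\AG$ from Proposition \ref{prop:Chebyshev acceleration} to the auxiliary matrix $\mY_0 := \mX_t - \eta\mU_t$, since by the modified update (\ref{eq:modify-XV}) we have $\mX_{t+1} = \AG(\mY_0, \mA, K)$. The key properties I will rely on are (i) $\AG$ preserves the column mean, so $\frac{1}{n}\mathbf{1}^\top \mX_{t+1} = \frac{1}{n}\mathbf{1}^\top \mY_0$, and (ii) the contraction $\Norm{\mX_{t+1} - \mathbf{1}\bar{\vy}_0} \leq \tilde\rho\Norm{\mY_0 - \mathbf{1}\bar{\vy}_0}$, where $\bar{\vy}_0 = \frac{1}{n}\mathbf{1}^\top\mY_0$.

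After this reduction, the task becomes controlling the centered version of $\mY_0$. Using the linearity of the projection $\mP := \mI - \frac{1}{n}\mathbf{1}\mathbf{1}^\top$ and the triangle inequality, I would write
\begin{align*}
\Norm{\mY_0 - \mathbf{1}\bar{\vy}_0} = \Norm{\mP(\mX_t - \eta\mU_t)} \leq \Norm{\mP\mX_t} + \eta\Norm{\mP\mU_t}.
\end{align*}
Since $\mP$ is an orthogonal projection with $\|\mP\|_2 = 1$, the second term is bounded by $\eta\Norm{\mU_t}$. Line~\ref{line:U} of Algorithm~\ref{alg:PDNSGD} ensures that each row of $\mU_t$ is a unit vector (the normalized $\vv_{t,i}/\Norm{\vv_{t,i}}$), so $\Norm{\mU_t}^2 = \sum_{i=1}^n 1 = n$, giving $\Norm{\mU_t} = \sqrt{n}$.

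Combining everything yields
\begin{align*}
\Norm{\mX_{t+1} - \tfrac{1}{n}\mathbf{1}\mathbf{1}^\top\mX_{t+1}} \leq \tilde\rho\left(\Norm{\mX_t - \tfrac{1}{n}\mathbf{1}\mathbf{1}^\top\mX_t} + \sqrt{n}\eta\right),
\end{align*}
which is exactly the claimed bound. I do not anticipate any genuine obstacle here: the whole argument is a direct application of Proposition~\ref{prop:Chebyshev acceleration} combined with the normalization built into the definition of $\mU_t$. The only minor point worth flagging in the write-up is making explicit that $\frac{1}{n}\mathbf{1}^\top\mX_{t+1}$ coincides with $\bar\vy_0$ (so that the left-hand side indeed equals $\Norm{\mX_{t+1} - \mathbf{1}\bar\vy_0}$), since this is what allows the contraction estimate of $\AG$ to be invoked without any correction term.
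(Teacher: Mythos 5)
Your proposal is correct and follows essentially the same route as the paper's proof: invoke the contraction and mean-preservation of $\AG$ from Proposition~\ref{prop:Chebyshev acceleration}, split the centered $\mX_t-\eta\mU_t$ by the triangle inequality, and bound the centered $\mU_t$ term by $\Norm{\mU_t}=\sqrt{n}$ (the paper cites Lemma~\ref{lem:undirected_norm_ineq} for the fact that centering does not increase the Frobenius norm, which is exactly your $\|\mP\|_2=1$ observation). No gaps.
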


\begin{lem}\label{lem:consensus_v_undirected}
Under the setting of Lemma \ref{lem:descent-undirected}, we have
\begin{align*}\small
\begin{split}    
&\BE\left[\Norm{\mV_{t+1}-\frac{1}{n}\mathbf{1}\boldsymbol{1}^{\top} \mV_{t+1}}\right]\\
\leq&\tilde\rho\left(\,\Norm{\mV_{t}-\frac{1}{n}\mathbf{1}\boldsymbol{1}^{\top} \mV_{t}}+(\tilde\rho+1)L\Norm{\mX_{t}-\frac{1}{n}\mathbf{1} \boldsymbol{1}^{\top} \mX_{t}} +  (2 \tilde\rho +1) L \eta \sqrt{n} + \frac{4 \sqrt{2 n}\sigma}{b^{1-1 / p}}\right).
\end{split}
\end{align*}
\end{lem}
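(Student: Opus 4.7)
The plan is to bound the consensus error of $\mV_{t+1}$ by applying the Chebyshev acceleration contraction of Proposition \ref{prop:Chebyshev acceleration} to the $\AG$ update, and then controlling the change of stochastic gradients $\mG_{t+1}-\mG_t$. Writing $\mJ:=\frac{1}{n}\boldsymbol{1}\boldsymbol{1}^{\top}$ and noting that $\AG$ preserves the row average, I would first obtain
\[
\Norm{\mV_{t+1}-\mJ\mV_{t+1}}\leq\tilde\rho\,\Norm{(\mI-\mJ)(\mV_t+\mG_{t+1}-\mG_t)},
\]
then split the right-hand side by the triangle inequality (and $\|\mI-\mJ\|_2\leq 1$) into $\Norm{(\mI-\mJ)\mV_t}+\Norm{\mG_{t+1}-\mG_t}$.

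Next I would decompose $\mG_{t+1}-\mG_t=\boldsymbol\varepsilon_{t+1}-\boldsymbol\varepsilon_t+(\nabla\mF(\mX_{t+1})-\nabla\mF(\mX_t))$, where $\boldsymbol\varepsilon_t:=\mG_t-\nabla\mF(\mX_t)$ is the noise matrix. The smoothness term is handled by Assumption \ref{asm:smoothness}, which gives $\Norm{\nabla\mF(\mX_{t+1})-\nabla\mF(\mX_t)}\leq L\Norm{\mX_{t+1}-\mX_t}$. To bound $\Norm{\mX_{t+1}-\mX_t}$, I would split it as $(\mI-\mJ)\mX_{t+1}-(\mI-\mJ)\mX_t-\eta\mJ\mU_t$, apply Proposition \ref{prop:Chebyshev acceleration} to the first term, and use that the rows of $\mU_t$ are unit vectors so $\Norm{\mU_t}\leq\sqrt n$. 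A short calculation parallel to the proof of Lemma \ref{lem:consensus_x_undirected} yields $\Norm{\mX_{t+1}-\mX_t}\leq (\tilde\rho+1)\Norm{(\mI-\mJ)\mX_t}+(\tilde\rho+1)\eta\sqrt n$, which is in turn majorized by $(\tilde\rho+1)\Norm{(\mI-\mJ)\mX_t}+(2\tilde\rho+1)\eta\sqrt n$ and matches the coefficients in the statement.

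For the noise contribution I would treat each row of $\boldsymbol\varepsilon_t$ separately: a von Bahr--Esseen type moment inequality for the $b$-minibatch average gives $\BE[\Norm{\boldsymbol\varepsilon_{t,i}}^p]\leq 2\sigma^p/b^{p-1}$. To aggregate across the $n$ independent agents while preserving the linear-speedup factor, I would use $\Norm{\boldsymbol\varepsilon_t}^p=(\sum_i\Norm{\boldsymbol\varepsilon_{t,i}}^2)^{p/2}\leq\sum_i\Norm{\boldsymbol\varepsilon_{t,i}}^p$ (valid because $p/2\leq 1$) followed by Jensen's inequality, producing $\BE[\Norm{\boldsymbol\varepsilon_t}]\leq\sqrt{2n}\,\sigma/b^{1-1/p}$. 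Treating $\boldsymbol\varepsilon_{t+1}$ identically and absorbing the triangle factor $2$ from $\|\boldsymbol\varepsilon_{t+1}\|+\|\boldsymbol\varepsilon_t\|$ gives the noise term $4\sqrt{2n}\,\sigma/b^{1-1/p}$. Combining the smoothness and noise bounds on $\BE[\Norm{\mG_{t+1}-\mG_t}]$, substituting back, and multiplying by $\tilde\rho$ delivers the claimed recursion. The main obstacle is this noise-aggregation step: a per-sample heavy-tailed moment bound must be combined with independence across agents through an $\ell_{p/2}\!\subseteq\!\ell_1$-type inequality on the Frobenius norm so that the $\sqrt n$ scaling survives; the remaining ingredients—gossip contraction, smoothness, and triangle inequalities—are routine once this aggregation is in place.
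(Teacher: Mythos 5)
Your skeleton matches the paper's: contract with Proposition~\ref{prop:Chebyshev acceleration} and $\Norm{(\mI-\mJ)\mM}\leq\Norm{\mM}$ (Lemma~\ref{lem:undirected_norm_ineq}), where $\mJ:=\frac{1}{n}\vone\vone^{\top}$, to get $\tilde\rho\big(\Norm{\mV_t-\mJ\mV_t}+\Norm{\mG_{t+1}-\mG_t}\big)$, then split $\mG_{t+1}-\mG_t$ into a smoothness part and a noise part. Your treatment of the smoothness part --- bounding $L\Norm{\mX_{t+1}-\mX_t}$ via $\mX_{t+1}-\mX_t=(\mI-\mJ)\mX_{t+1}-(\mI-\mJ)\mX_t-\eta\mJ\mU_t$, Lemma~\ref{lem:consensus_x_undirected}, and $\Norm{\frac{1}{n}\vone^\top\mU_t}\leq 1$ --- is an equivalent repackaging of the paper's five-term split through $\nabla\mF(\vone\bar\vx_{t+1})$ and $\nabla\mF(\vone\bar\vx_t)$, and it reproduces the coefficients $(\tilde\rho+1)L$ and $(2\tilde\rho+1)L\eta\sqrt n$ correctly.

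The genuine gap is exactly the step you yourself flagged: the noise aggregation. From $\BE\Norm{\boldsymbol\varepsilon_{t,i}}^p\leq 2\sigma^p/b^{p-1}$ and $\Norm{\boldsymbol\varepsilon_t}^p\leq\sum_{i=1}^n\Norm{\boldsymbol\varepsilon_{t,i}}^p$, Jensen gives $\BE\Norm{\boldsymbol\varepsilon_t}\leq\big(\BE\Norm{\boldsymbol\varepsilon_t}^p\big)^{1/p}\leq (2n)^{1/p}\sigma/b^{1-1/p}$, \emph{not} $\sqrt{2n}\,\sigma/b^{1-1/p}$. For $p<2$ the factor $(2n)^{1/p}$ exceeds $\sqrt{2n}$ by $(2n)^{1/p-1/2}$, so your chain of inequalities yields a noise term of order $n^{1/p}\sigma/b^{1-1/p}$ and does not establish the $4\sqrt{2n}\,\sigma/b^{1-1/p}$ term in the lemma; it would also spoil the downstream bookkeeping, since in the Lyapunov analysis the $\sqrt n$ here is what gets cancelled by the $1/\sqrt n$ weight to produce the $(nb)^{1-1/p}$ speedup. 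The $\ell_{p/2}\subseteq\ell_1$ device you invoke is precisely what caps you at $n^{1/p}$: with only $p$-th moments per row, Frobenius-norm aggregation over independent rows cannot give $\sqrt n$ along this route. The paper does something different at this point: it bounds $\Norm{\mG_{t+1}-\nabla\mF(\mX_{t+1})}\leq\sqrt n\max_{i\in[n]}\Norm{\vg_{t+1,i}-\nabla f_i(\vx_{t+1,i})}$ and then applies the per-agent first-moment bound of Lemma~\ref{lem:heavy-tailed} to the maximum (pulling the max outside the expectation), which is how its $2\sqrt{2n}\,\sigma/b^{1-1/p}$ per noise matrix arises; note that this exchange of max and expectation is itself the delicate point, and your proposal contains no substitute for it. So, as written, your argument proves the recursion only with $n^{1/p}$ in place of $\sqrt n$ in the noise term.
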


Combing Lemmas \ref{lem:descent-undirected}--\ref{lem:consensus_v_undirected}, we achieve our main results for the setting of undirected networks.

\begin{thm}\label{thm:main2}
Under Assumptions \ref{asm:smoothness}--\ref{asm:primitive2}, running Algorithm~\ref{alg:PDNSGD} by modifying the updates of $\mX_{t+1}$ and $\mV_{t+1}$ to equations~(\ref{eq:modify-XV}) and (\ref{eq:modify-X0}) and 
by taking 
$T=\Theta\left(L\Delta\epsilon^{-2}\right)$, $\eta=\Theta(\epsilon/L)$, 
$b=\Theta\big(n^{-1} \sigma^{\frac{p}{p-1}}\epsilon^{-\frac{p}{p-1}}\big)$, 
and $\hat K = K = \tilde{{\Theta}}(1/(1-\beta)^{\frac{1}{2}})$
guarantees
$\BE[\Norm{\nabla f({\hat\vx}_i)}] \leq \epsilon$
for all $i\in[n]$, where $\Delta:=f(\bar\vx_0)-f^*$.
Additionally, it takes the overall sample complexity of
$\mathcal{O}(L \sigma^{\frac{p}{p-1}}\epsilon^{-\frac{3p-2}{p-1}}\Delta)$ and the communication complexity of
${\tilde\fO}(L(1-\beta)^{-\frac{1}{2}}\epsilon^{-2}\Delta)$.
\end{thm}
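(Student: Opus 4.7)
The plan is to adapt the Lyapunov argument that leads to Theorem~\ref{thm:main} to the undirected setting, exploiting the sharper contraction factor $\tilde\rho = c_1(1-c_2\sqrt{1-\beta})^K$ produced by Chebyshev acceleration (Proposition~\ref{prop:Chebyshev acceleration}). Because $\vpi = \frac{1}{n}\vone$ forces the skewness $\kappa=1$, all of the $\theta$, $\kappa^{3/2}$, and $\beta^{(t+1)K}$ factors that complicate Lemmas~\ref{lem:Estimate descent deviation} and \ref{lem:consensus_v} disappear in Lemmas~\ref{lem:Estimate descent deviation undirected}--\ref{lem:consensus_v_undirected}, leaving a recursion whose sole shrinkage comes from $\tilde\rho$.

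The first step is to construct a Lyapunov potential of the form
\begin{align*}
\Phi_t \;:=\; f(\bar\vx_t) \;+\; \frac{c_X\,\eta L}{\sqrt n}\Norm{\mX_t - \tfrac{1}{n}\vone\vone^\top\mX_t} \;+\; \frac{c_V\,\eta}{\sqrt n}\Norm{\mV_t - \tfrac{1}{n}\vone\vone^\top\mV_t}
\end{align*}
with absolute constants $c_X,c_V$. Picking $K=\tilde\Theta(1/\sqrt{1-\beta})$ forces $\tilde\rho$ below a small constant, which turns Lemmas~\ref{lem:consensus_x_undirected} and \ref{lem:consensus_v_undirected} into genuine contractions; substituting Lemma~\ref{lem:Estimate descent deviation undirected} into Lemma~\ref{lem:descent-undirected} then replaces the deviation term by $\frac{L}{\sqrt n}\Norm{\mX_t - \frac{1}{n}\vone\vone^\top\mX_t}+8\sigma/(nb)^{1-1/p}$. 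Adding these inequalities with $c_X,c_V$ chosen so that the positive consensus contributions of the descent inequality are cancelled by the negative $(1-\tilde\rho)$-factors of the two tracking recursions, the telescoping sum over $t=0,\dots,T-1$ will produce
\begin{align*}
\frac{1}{T}\sum_{t=0}^{T-1}\BE\!\left[\Norm{\nabla f(\bar\vx_t)}\right] \;\leq\; \frac{2\,\BE[\Phi_0-\Phi_T]}{T\eta} \;+\; \mathcal O\!\left(\eta L + \frac{\sigma}{(nb)^{1-1/p}}\right),
\end{align*}
where $\hat K = \tilde\Theta(1/\sqrt{1-\beta})$ is used to control $\Phi_0$ by applying Proposition~\ref{prop:Chebyshev acceleration} to the initial $\mG_0$.

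Plugging in $\eta=\Theta(\epsilon/L)$, $b=\Theta(n^{-1}\sigma^{p/(p-1)}\epsilon^{-p/(p-1)})$, and $T=\Theta(L\Delta\epsilon^{-2})$ drives the right-hand side to $\mathcal O(\epsilon)$, yielding $\BE\Norm{\nabla f(\bar\vx_t)}\leq\epsilon$ for the randomly sampled iterate; the bound $\Norm{\vx_{t,i}-\bar\vx_t}=\mathcal O(\epsilon/L)$, which falls out of the $\mX$-consensus recursion at the chosen step size, then transfers the stationarity to every local $\hat\vx_i$ via the smoothness assumption. Counting $Tbn$ gives the sample complexity $\mathcal O(L\sigma^{p/(p-1)}\epsilon^{-(3p-2)/(p-1)}\Delta)$, and counting $TK$ gives the communication complexity $\tilde{\mathcal O}(L\Delta(1-\beta)^{-1/2}\epsilon^{-2})$.

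The main obstacle is the coefficient bookkeeping in the Lyapunov combination: $c_X$ and $c_V$ must absorb both the $\tfrac{L}{\sqrt n}\Norm{\mX_t-\tfrac{1}{n}\vone\vone^\top\mX_t}$ term and the $\tfrac{1}{\sqrt n}\Norm{\mV_t-\tfrac{1}{n}\vone\vone^\top\mV_t}$ term from the descent inequality, while simultaneously cancelling the cross-perturbations introduced into Lemma~\ref{lem:consensus_v_undirected} by the normalized step ($\eta L\sqrt n$) and the minibatch noise ($\sqrt n\sigma/b^{1-1/p}$). Unlike the directed case, no decaying $\beta^{(t+1)K}$ helps here, so every shrinkage must come from $\tilde\rho$ being smaller than an explicit constant threshold determined by $c_X$ and $c_V$; verifying that threshold and recovering the stated polylogarithmic factor in $K=\tilde\Theta(1/\sqrt{1-\beta})$ is the most delicate part of the argument.
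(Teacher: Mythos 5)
Your proposal follows essentially the same route as the paper: the same Lyapunov potential $f(\bar\vx_t)+\Theta(\eta L/\sqrt n)\Norm{\mX_t-\tfrac{1}{n}\vone\vone^\top\mX_t}+\Theta(\eta/\sqrt n)\Norm{\mV_t-\tfrac{1}{n}\vone\vone^\top\mV_t}$ (the paper uses the explicit weights $32$ and $4$), the same use of Lemmas~\ref{lem:descent-undirected}--\ref{lem:consensus_v_undirected} with $K=\tilde\Theta(1/\sqrt{1-\beta})$ forcing $\tilde\rho$ below an explicit threshold, the same control of $\Phi_0$ via $\hat K$ and Proposition~\ref{prop:Chebyshev acceleration}, and the same parameter choices and transfer to local iterates; the coefficient bookkeeping you flag is resolved in the paper exactly as you anticipate, by taking $\tilde\rho\le\min\{1/2,\,n^{-(1-1/p)}\}$.
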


We emphasize that both the sample complexity and the communication complexity achieved by Theorem~\ref{thm:main2} are nearly tight for undirected networks.
Recall the discussion on the lower bound after Proposition \ref{prop:main-lower-bound} does not depend on the topology of the network. 
Therefore, it also implies our overall sample complexity of $\mathcal{O}(L \sigma^{\frac{p}{p-1}}\epsilon^{-\frac{3p-2}{p-1}}\Delta)$ is optimal for  undirected networks.

Additionally, \citet[Instance 2 in Appendix~D.1]{yuan2022revisiting} showed the decentralized nonconvex optimization over the undirected network has the communication lower bound of~$\Omega(L(1-\beta)^{-\frac{1}{2}}\epsilon^{-2}\Delta)$.
In fact, the construction of such lower bound depends on the noiseless local gradient oracle. Similar to the discussion after Proposition \ref{prop:main-lower-bound2}, it also works for our setting of stochastic local gradient oracle. 
Hence, our communication complexity of ${\tilde\fO}(L(1-\beta)^{-\frac{1}{2}}\epsilon^{-2}\Delta)$ is near-optimal for undirected networks.

Recently, \citet{yu2025decentralized} proposed normalized stochastic gradient descent with momentum and gradient
tracking (GT-NSGDm) for stochastic nonconvex optimization with heavy-tailed noise over undirected networks.
The overall sample complexity of GT-NSGDm depends on $\fO(\epsilon^{\frac{3p-2}{p-1}}\big)$ which is tight with respect to the accuracy $\epsilon$, while it is not optimal to the other parameters such as $\sigma$, $L$, and $\beta$.
Moreover, the communication complexity of GT-NSGDm also depends on~$\fO(\epsilon^{\frac{3p-2}{p-1}}\big)$, which is more expensive than ours. 
Please refer to the detailed discussion in Appendix \ref{appendix:compare}.

\begin{figure}[t]
  \centering
  \begin{tabular}{c@{\hspace{1.5cm}}c}
       \includegraphics[scale=0.17]{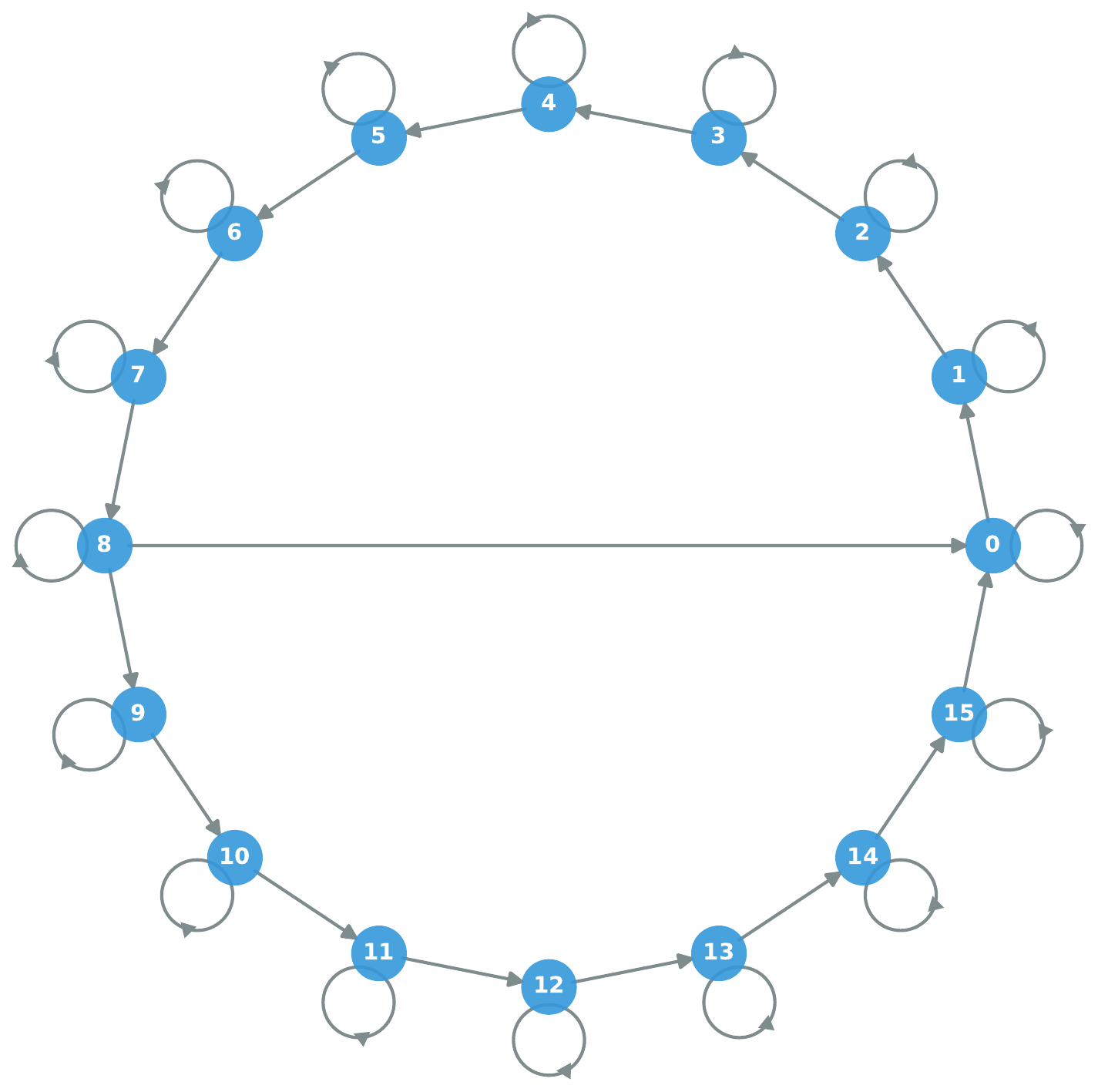} &
       \includegraphics[scale=0.20]{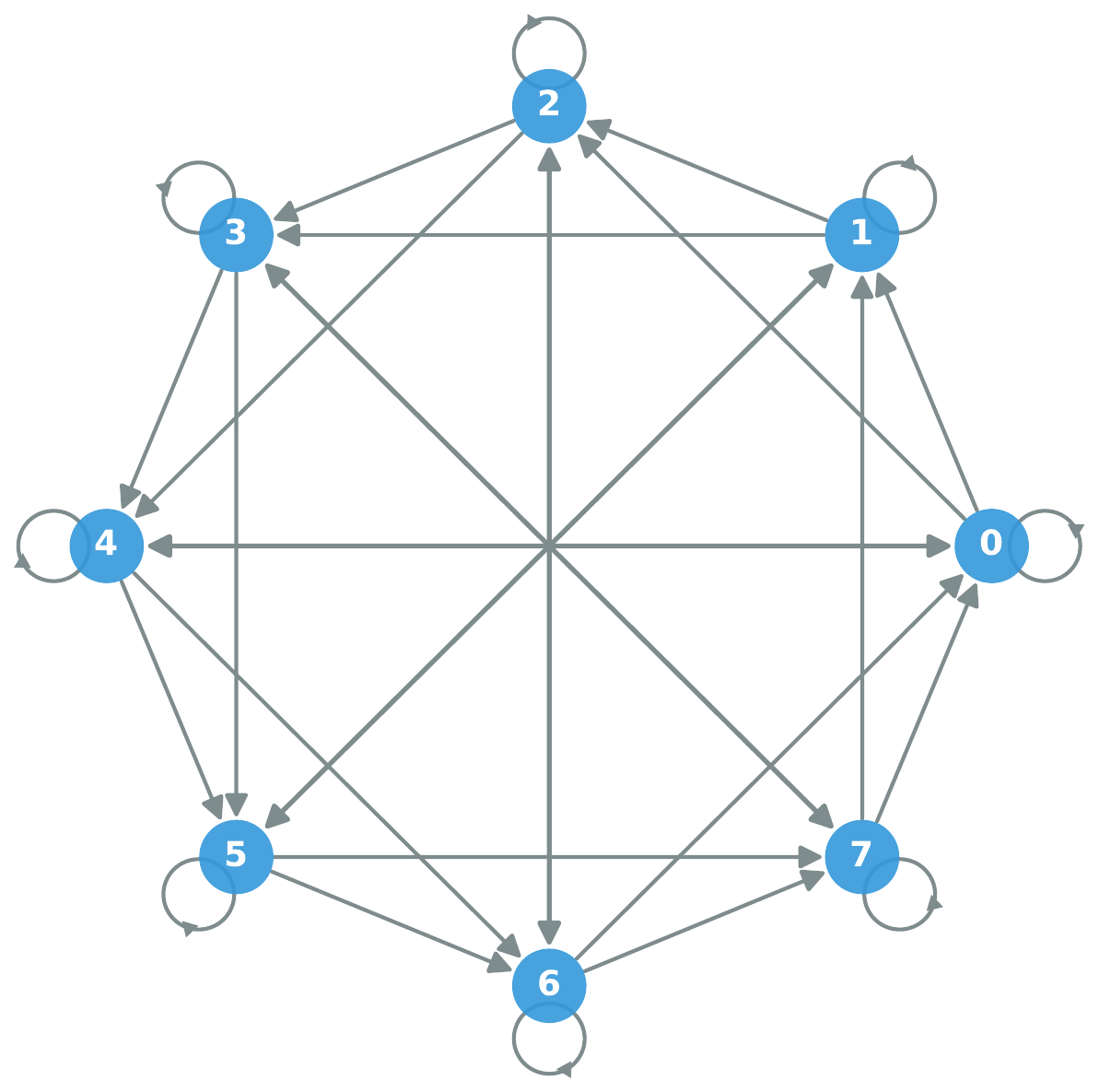} \\   
       \small (a) directed ring graph & 
       \small (b) directed exponential graph
  \end{tabular}
  \caption{We present the topologies of the directed ring graph with $n=16$ and the directed exponential graph with $n=8$.}
  \label{fig:topology}
\end{figure}

\section{Experiments}\label{sec:experiments}
In this section, we conduct the numerical experiments to validate the effectiveness of the proposed methods over directed networks.
The results on undirected networks are deferred to Appendix~\ref{appendix:exp-undirected}.

We conduct our experiments on the task of training a 6-layer Transformer-XL network
on the Penn Treebank (PTB) dataset that contains roughly one million tokens with a 10K vocabulary \citep{mikolov2012context}.
The architecture of the network in our experiments follows the setting of \citet{dai2019transformer}.

Specifically, each layer consists of 512-dimensional hidden states, 8 attention heads, and a 2048-dimensional feedforward network. 
We use the dropout with a rate of 0.2 on both attention and feedforward layers and  
set the segment length as 35 tokens.

We compare proposed decentralized normalized stochastic gradient descent with \pulldiag gradient tracking (NSGD-PD) and \pulldiag-GT with multiple gossips (MG-\pulldiag-GT) \cite{liang2025rowstochastic}.
We perform our experiments on the networks associated with the directed ring graph ($n=16$, $\beta = 0.952$, and $\kappa = 2$) and the directed exponential graph ($n=8$, $\beta=0.5$, and $\kappa = 1$) to evaluate the sample complexity and the communication complexity.
We present these topologies in Figure \ref{fig:topology}.
We set the number of consensus steps as $K \in \{3,5,8\}$ for the directed ring graph and $K \in \{1,3,5\}$ for the directed exponential graph, since the connectivity of the directed ring graph is weaker.
For all algorithms, we tune the step size from $\{0.005,0.01,0.02,0.05,0.1,0.2,0.3,0.4,0.5\}$ and fix the minibatch size as 20. 
We additionally run our NSGD-PD with $K=5$ and $n\in\{1,2,4,8\}$ to evaluate its speedup with respect to the number of agents.

We present empirical results in Figures~\ref{fig:directed-ring} and~\ref{fig:directed-ex}. 
Specifically, Figures~\ref{fig:directed-ring}(a),  \ref{fig:directed-ring}(b), \ref{fig:directed-ex}(a), and \ref{fig:directed-ex}(b) demonstrate that the proposed DNSGD-PD significantly outperforms the baseline method. 
These results validate our theoretical analysis, showing that the normalization step effectively mitigates the impact of heavy-tailed gradient noise.
Moreover, as shown in Figures~\ref{fig:directed-ex}(a) and \ref{fig:directed-ex}(b), increasing the parameter $K$ for our DNSGD-PD does not lead to improvement for the directed exponential graph, which is expected given the strong connectivity of this topology.
Additionally, Figures~\ref{fig:directed-ring}(c) and \ref{fig:directed-ex}(c) demonstrate our DNSGD-PD enjoys the linear speedup with respect to the number of agents $n$, matching our theoretical analysis in Section \ref{sec:main}.

\begin{figure}[t]
  \centering
  \begin{tabular}{c@{\hspace{2mm}}c@{\hspace{2mm}}c}
    \includegraphics[width=0.3\columnwidth]{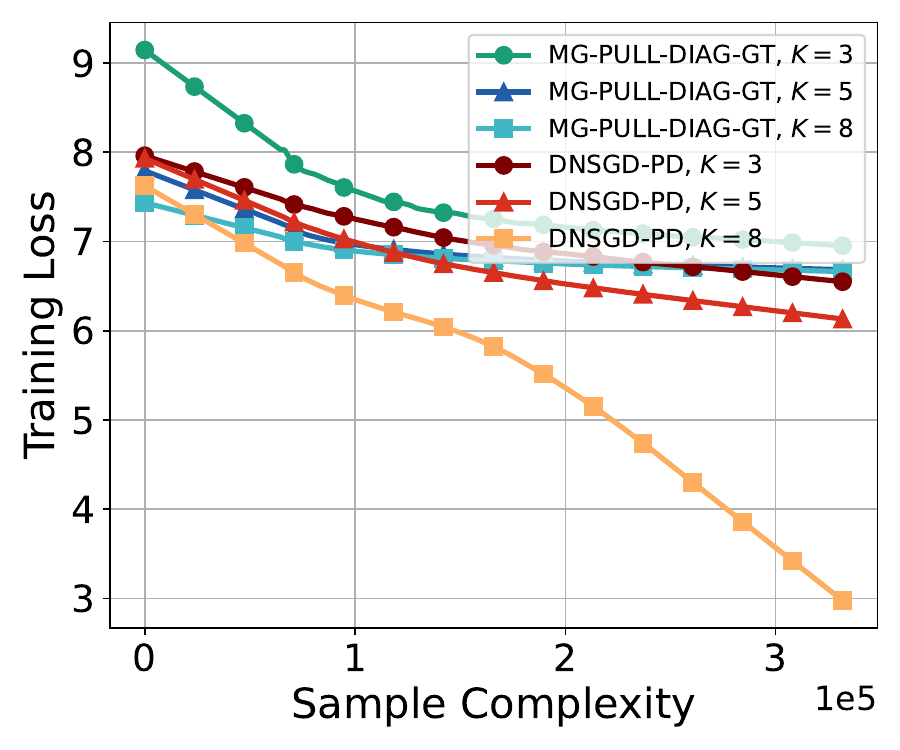} &
    \includegraphics[width=0.3\columnwidth]{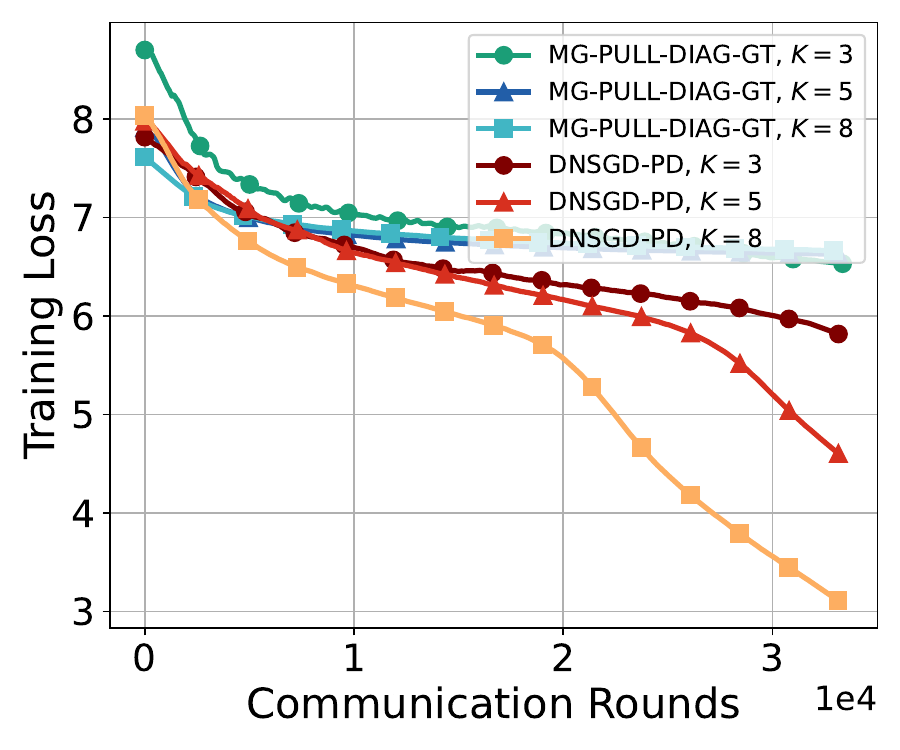} &
    \includegraphics[width=0.3\columnwidth]{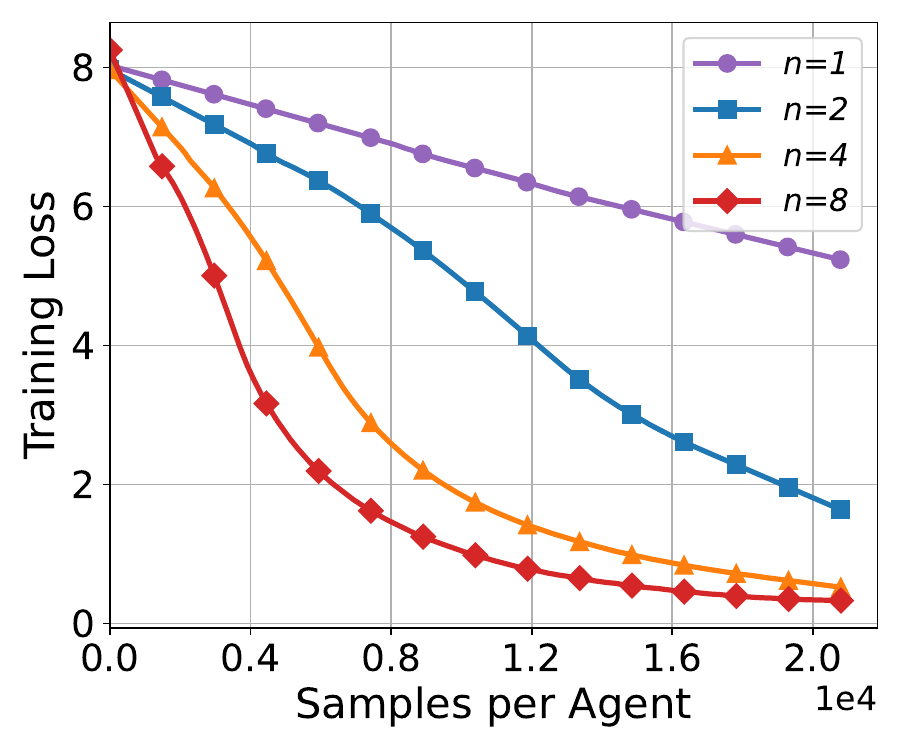} \\
    \small (a) \#sample vs. training loss &
    \small (b) \#communication vs. training loss &
    \small (c) speedup with respect to $n$
  \end{tabular} \vskip-0.18cm
  \caption{The comparison between proposed DNSGD-PD and baseline method MG-\pulldiag-GT on the directed ring network, 
  where subfigures (a) and (b) set $n=16$ and $K\in\{3,5,8\}$, 
  and subfigure (c) sets $n\in\{1,2,4,8\}$ and $K=5$.}
  \label{fig:directed-ring}
\end{figure}

\begin{figure}[t]
  \centering
  \begin{tabular}{c@{\hspace{2mm}}c@{\hspace{2mm}}c}
    \includegraphics[width=0.3\columnwidth]{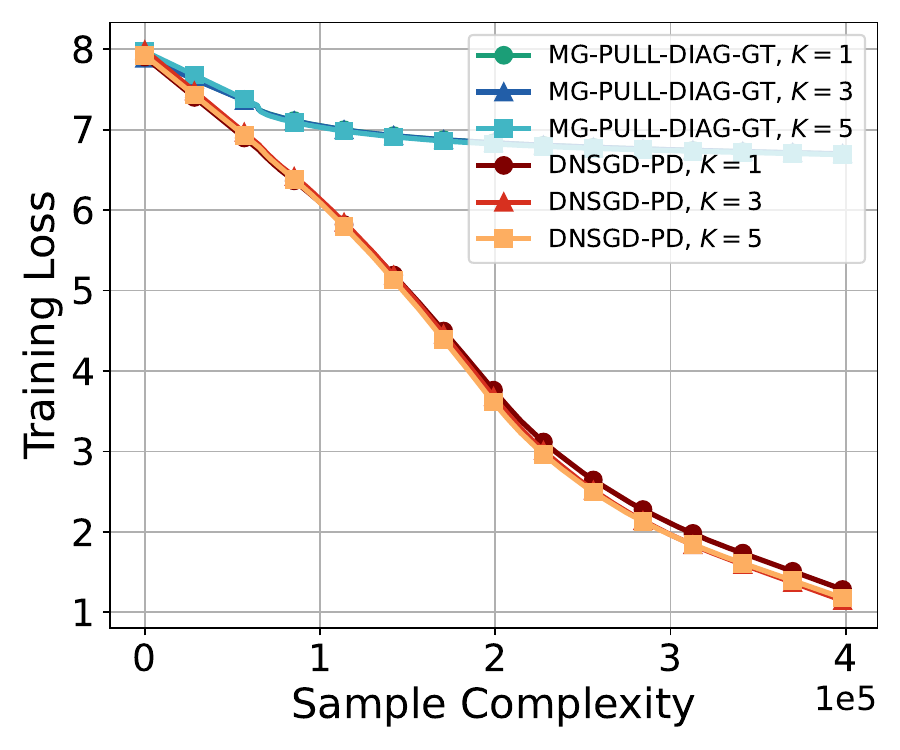} &
    \includegraphics[width=0.3\columnwidth]{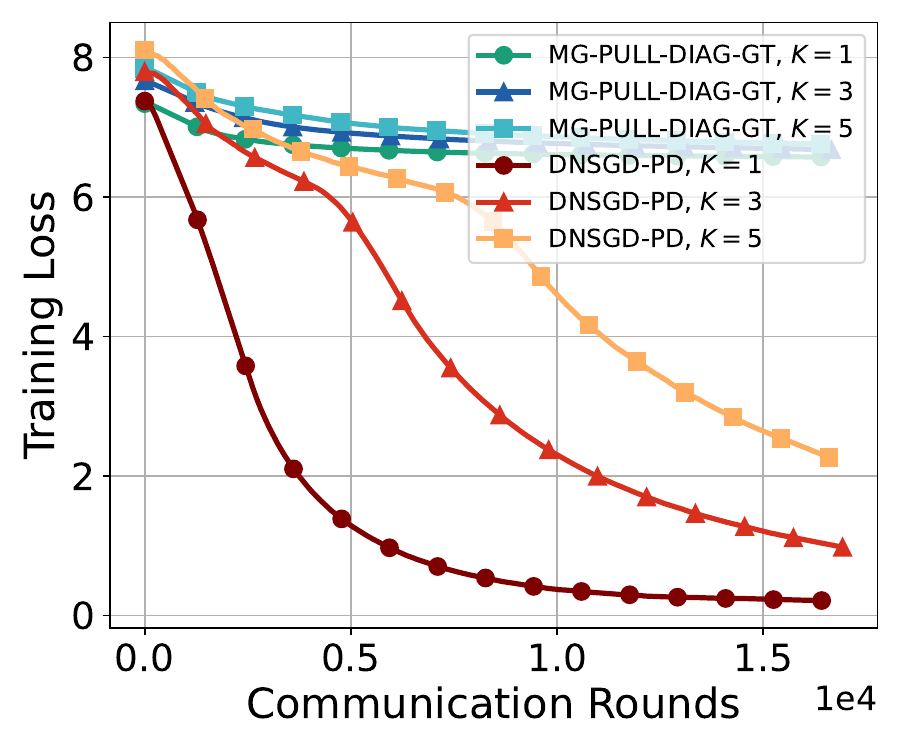} &
    \includegraphics[width=0.3\columnwidth]{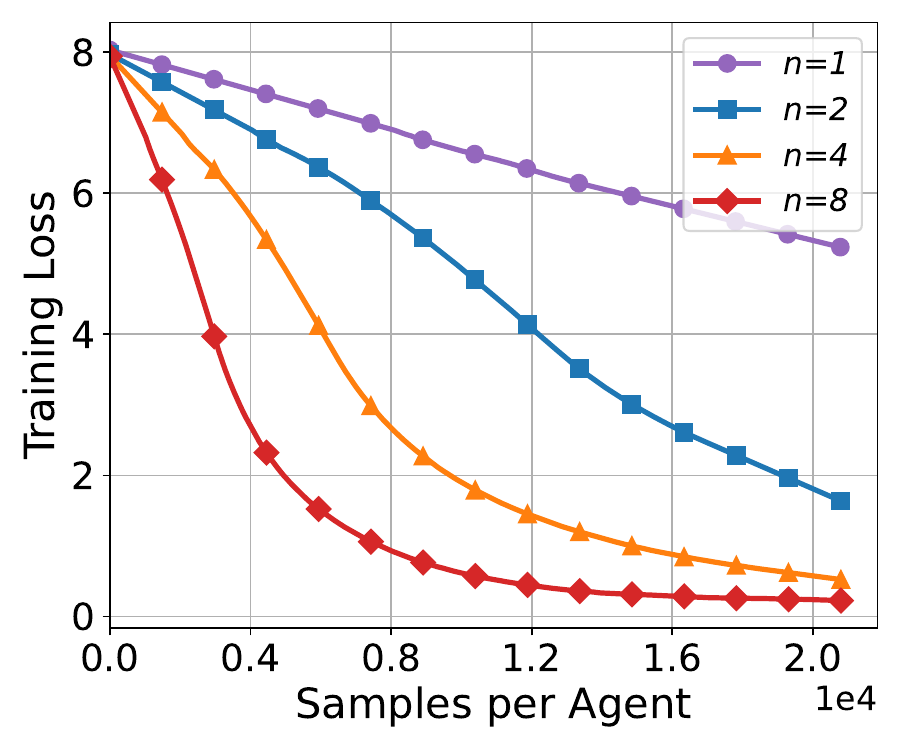} \\
    \small (a) \#sample vs.\ training loss &
    \small (b) \#communication vs.\ training loss &
    \small (c) speedup with respect to $n$
  \end{tabular}\vskip-0.18cm
  \caption{The comparison between the proposed DNSGD-PD and the baseline method MG-\pulldiag-GT on the directed exponential network, 
  where subfigures (a) and (b) set $n=8$ and $K\in\{1,3,5\}$, 
  and subfigure~(c) sets $n\in\{1,2,4,8\}$ and $K=5$.}
  \label{fig:directed-ex}
  \vskip -0.3cm
\end{figure}

\section{Conclusion}

In this paper, we study decentralized stochastic nonconvex optimization with heavy-tailed noise.
For the row-stochastic network, we propose decentralized normalized stochastic gradient descent with \pulldiag gradient tracking, which enjoys the linear speedup with respect to the number of agents.
Our theoretical analysis shows that the proposed method achieves both the optimal overall sample complexity and the near-optimal communication complexity.
We further follow our framework to study decentralized normalized stochastic gradient descent for nonconvex optimization over undirected networks, also achieving the optimal overall sample complexity and the near-optimal communication complexity under the heavy-tailed noise.
The experimental results on training Transformer-XL network show that the proposed method outperforms the baseline, which supports our theoretical results.

In future work, we would like to consider the conditions of the high-order smoothness and the averaged smoothness \citep{sun2024gradient,he2025complexity} to establish the sharper complexity bounds. 
It is also possible to extend our ideas to design decentralized algorithms for convex problem \citep{sun2024distributed} and column-stochastic networks \citep{liang2025understanding,assran2019stochastic,kungurtsev2023decentralized}.

\bibliographystyle{plainnat}
\bibliography{reference}

\appendix

\newpage
\appendix
\onecolumn

In appendices, we always follow notations in Section \ref{sec:pre}, i.e.,
\begin{align*}
{\bf\Pi}:= \operatorname{diag}(\vpi)\in\BR^{n\times n}
\qquad\text{and} \qquad
\Norm{\mA}_{\vpi} := \|{\bf\Pi}^{1/2}\mA{\bf\Pi}^{-1/2}\|_2.
\end{align*}
where
\begin{align*}
\vpi=[\pi_1,\dots,\pi_n]^\top \in \mathbb{R}^n    
\end{align*}
is the equilibrium vector defined in Proposition \ref{prop:Perron-Frobenius}.
We also characterize the mixing matrix $\mA$ by the parameters
\begin{align*}
    \beta:=\|\mA-\mathbf{1 } \vpi^{\top}\|_{\vpi}, \qquad
    \kappa:=\frac{\max_{i\in[n]} \pi_i}{\min_{i\in[n]} \pi_i} \in[1,+\infty), \qquad\text{and}\qquad \theta:=2n\kappa.
\end{align*}

\section{Supporting Lemmas}\label{sec:supporting-lemmas}

We first introduce some basic lemmas for addressing the stochastic gradient under the $p$-BCM assumption.

\begin{lem}[{\citet[Lemma 7]{hubler2024gradient}}]\label{lem: baseineq}
    For all $\va, \vb \in \mathbb{R}^n$ with $\vb \neq 0$, we have 
    \begin{align*}
        \frac{\va^{\top} \vb}{\Norm{\vb}} \geq\Norm{\va}-2\Norm{\va-\vb}.
    \end{align*}
\end{lem}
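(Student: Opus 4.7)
The plan is to reduce the inequality to two applications of standard norm inequalities: Cauchy--Schwarz and the (reverse) triangle inequality. The geometric intuition is that $\va^\top\vb/\Norm{\vb}$ is the scalar projection of $\va$ onto $\vb$, which should be close to $\Norm{\vb}$ when $\va$ is close to $\vb$, and close to $\Norm{\va}$ when $\va$ and $\vb$ point in similar directions. The goal is to make these two intuitions quantitative so that the error on the right-hand side is controlled by $\Norm{\va-\vb}$.

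First I would perform the identity decomposition $\va^\top\vb = \Norm{\vb}^2 + (\va-\vb)^\top\vb$, divide both sides by $\Norm{\vb}$, and apply Cauchy--Schwarz to the cross term to obtain
\[
\frac{\va^\top\vb}{\Norm{\vb}} \;\geq\; \Norm{\vb} - \Norm{\va-\vb}.
\]
This handles the ``projection is close to $\Norm{\vb}$'' part.

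Second, I would invoke the reverse triangle inequality $\Norm{\vb} \geq \Norm{\va} - \Norm{\va-\vb}$ to switch from $\Norm{\vb}$ to $\Norm{\va}$ on the right-hand side. Substituting this bound into the display above then yields the claimed inequality
\[
\frac{\va^\top\vb}{\Norm{\vb}} \;\geq\; \Norm{\va} - 2\Norm{\va-\vb},
\]
where the factor of $2$ arises because each of the two substitutions contributes one copy of the error term $\Norm{\va-\vb}$.

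I do not expect any serious obstacle here; the proof is essentially a two-line calculation once one notices the decomposition $\va=\vb+(\va-\vb)$. The only thing to be slightly careful about is the case $\vb\neq\vzero$, which is given by hypothesis and ensures that dividing by $\Norm{\vb}$ is well-defined; no further case analysis is needed since the argument is entirely algebraic and does not use any positivity of $\va^\top\vb$.
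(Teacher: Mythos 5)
Your proof is correct: the decomposition $\va^\top\vb=\Norm{\vb}^2+(\va-\vb)^\top\vb$ with Cauchy--Schwarz gives $\va^\top\vb/\Norm{\vb}\geq\Norm{\vb}-\Norm{\va-\vb}$, and the triangle inequality $\Norm{\vb}\geq\Norm{\va}-\Norm{\va-\vb}$ then yields the claim. The paper itself does not prove this lemma but only cites \citet[Lemma 7]{hubler2024gradient}, and your two-line argument is exactly the standard proof of that cited result, so there is nothing to add.
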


\begin{lem}[{\citet[Lemma 4.3]{liu2024nonconvex}}]\label{lem:heavy-tailed_cite}  Given a sequence of integrable random vectors $\vv_k\in \mathbb{R}^n,$ for all $k \in [b]$ such that $\mathbb{E}[\vv_k|\mathcal{F}_{k-1}]=0$ where $\mathcal{F}_k \triangleq \sigma\left(\vv_{1}, \cdots, \vv_{k}\right)$ is the natural filtration, then for all $p \in [1,2]$, it holds
\begin{align*}
    \mathbb{E}\left[\Norm{\sum_{k=1}^{b} \vv_{k}}\right] \leq 2 \sqrt{2} \mathbb{E}\left[\left(\sum_{k=1}^{b}\Norm{\vv_{k}}^p\right)^{\frac{1}{p}}\right].
\end{align*}
\end{lem}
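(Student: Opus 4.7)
The plan is to establish this Hilbert-space-valued martingale moment inequality by first reducing the general case $p\in[1,2]$ to $p=2$, and then proving the reduced Davis-type square function estimate via a Chebyshev-plus-stopping-time truncation whose free threshold $\alpha$ is optimized to yield the constant $2\sqrt{2}$.

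\textbf{Reduction to $p=2$.} For any nonnegative reals $(a_k)_{k=1}^b$ and $p\in[1,2]$, the $\ell^q$-norm monotonicity on finite sequences gives $\bigl(\sum_k a_k^2\bigr)^{1/2}\le\bigl(\sum_k a_k^p\bigr)^{1/p}$ pointwise. Applied with $a_k=\Norm{\vv_k}$ and followed by taking expectations, it suffices to prove
\begin{equation*}
\BE\Norm{\vs_b}\le 2\sqrt{2}\,\BE[Q_b],\qquad \vs_b:=\sum_{k=1}^b\vv_k,\ \ Q_b:=\Bigl(\sum_{k=1}^b\Norm{\vv_k}^2\Bigr)^{1/2}.
\end{equation*}

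\textbf{Stopping-time decomposition.} Since $Q_k$ is $\fF_k$-adapted and non-decreasing, for each $\lambda>0$ the rule $\tau_\lambda:=\min\{k\ge 1:Q_k>\lambda\}\wedge b$ is an $(\fF_k)$-stopping time, with $\{\tau_\lambda<b\}\subseteq\{Q_b>\lambda\}$ and $Q_{\tau_\lambda-1}\le\lambda$. Orthogonality of martingale differences under optional stopping gives $\BE\Norm{\vs_{\tau_\lambda}}^2=\BE[Q_{\tau_\lambda}^2]$, and a case analysis on $\{Q_b\le\lambda\}$ vs.\ $\{Q_b>\lambda\}$, combined with a bounded-difference truncation to control the overshoot $\Norm{\vv_{\tau_\lambda}}$, furnishes $\BE[Q_{\tau_\lambda}^2]\le\BE[Q_b^2\mathbf{1}_{\{Q_b\le\lambda\}}]+\lambda^2\BP(Q_b>\lambda)$. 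Chebyshev's inequality then yields, for any $\alpha>0$,
\begin{equation*}
\BP(\Norm{\vs_b}>\alpha\lambda)\le\BP(\tau_\lambda<b)+\BP(\Norm{\vs_{\tau_\lambda}}>\alpha\lambda)\le\BP(Q_b>\lambda)+(\alpha\lambda)^{-2}\BE[Q_{\tau_\lambda}^2].
\end{equation*}

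\textbf{Layer cake and optimization in $\alpha$.} Integrating via $\BE\Norm{\vs_b}=\alpha\int_0^\infty\BP(\Norm{\vs_b}>\alpha\lambda)\,d\lambda$ and using $\int_0^\infty\BP(Q_b>\lambda)\,d\lambda=\BE[Q_b]$ together with the Fubini computation $\int_0^\infty\lambda^{-2}\BE[Q_b^2\mathbf{1}_{\{Q_b\le\lambda\}}]\,d\lambda=\BE[Q_b^2/Q_b]=\BE[Q_b]$ produces $\BE\Norm{\vs_b}\le(\alpha+2/\alpha)\BE[Q_b]$. Choosing $\alpha=\sqrt{2}$ minimizes $\alpha+2/\alpha$ at $2\sqrt{2}$, delivering the claimed constant.

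\textbf{Main obstacle.} The delicate point is the overshoot of the square function at $\tau_\lambda$: on $\{Q_b>\lambda\}$, $Q_{\tau_\lambda}^2=Q_{\tau_\lambda-1}^2+\Norm{\vv_{\tau_\lambda}}^2\le\lambda^2+\Norm{\vv_{\tau_\lambda}}^2$ carries an extra jump $\Norm{\vv_{\tau_\lambda}}^2$ that is a priori uncontrolled under only the $p$-BCM condition with $p<2$. The standard resolution is to first establish the inequality for uniformly bounded differences obtained via centered truncation $\tilde\vv_k:=\vv_k\mathbf{1}_{\{\Norm{\vv_k}\le M\}}-\BE[\vv_k\mathbf{1}_{\{\Norm{\vv_k}\le M\}}\mid\fF_{k-1}]$, and then pass to the limit $M\to\infty$ by monotone convergence on both sides. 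Threading this truncation through the layer-cake integral while still extracting the sharp $2\sqrt{2}$ is the technical crux handled in the cited reference.
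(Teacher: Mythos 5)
The paper does not prove this lemma at all: it is imported verbatim from \citet[Lemma 4.3]{liu2024nonconvex}, so there is no in-paper argument to compare yours against, and I can only assess your proposal on its own terms. Your first step is fine: the pointwise monotonicity $\bigl(\sum_k a_k^2\bigr)^{1/2}\le\bigl(\sum_k a_k^p\bigr)^{1/p}$ for $p\in[1,2]$ correctly reduces the claim to the Davis-type estimate $\BE\Norm{\vs_b}\le 2\sqrt{2}\,\BE[Q_b]$, which is indeed the right intermediate statement.

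The stopping-time step, however, contains a genuine gap rather than a deferred technicality. The inequality $\BE[Q_{\tau_\lambda}^2]\le\BE[Q_b^2\mathbf{1}_{\{Q_b\le\lambda\}}]+\lambda^2\BP(Q_b>\lambda)$ is false: already for $b=1$ one has $\tau_\lambda=1$ and $\BE[Q_{\tau_\lambda}^2]=\BE\Norm{\vv_1}^2$, while the right-hand side is at most $\lambda^2$, so the bound fails for any $\lambda$ with $\lambda^2<\BE\Norm{\vv_1}^2$. The correct bound necessarily carries the overshoot term $\BE[\Norm{\vv_{\tau_\lambda}}^2\mathbf{1}_{\{\tau_\lambda\le b\}}]$, and this term destroys the layer-cake computation: since $\tau_\lambda=1$ for every $\lambda<\Norm{\vv_1}$, its contribution to $\int_0^\infty(\alpha\lambda)^{-2}(\cdot)\,d\lambda$ is at least $\alpha^{-2}\,\BE\bigl[\Norm{\vv_1}^2\int_0^{\Norm{\vv_1}}\lambda^{-2}\,d\lambda\bigr]=+\infty$ whenever $\vv_1\neq 0$. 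Your proposed repair (centered truncation at level $M$ followed by $M\to\infty$) does not address this, because the divergence occurs as $\lambda\to 0$ and persists even for uniformly bounded differences; monotone convergence cannot rescue a divergent integral. The known ways around the overshoot --- Davis's decomposition of $\vv_k$ into a part whose jumps are predictably dominated by $2\max_{j<k}\Norm{\vv_j}$ plus a part with integrable compensator, or a weak-type $(1,1)$ estimate followed by interpolation --- are substantially different and longer arguments, and they do not obviously return the clean constant $\alpha+2/\alpha$ that you optimize: your $2\sqrt{2}$ is obtained precisely by dropping the one term that cannot be dropped. Since you explicitly defer this crux to the cited reference, the proposal as written does not constitute a proof of the statement.
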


\begin{lem}\label{lem:heavy-tailed} 

Under Assumption \ref{asm:pBCM}, let $\vg_{t,i}=\frac{1}{b} \sum_{k=1}^{b} \nabla F_{i}(\vx_{t,i} ; \xi_{t,i, k})$, then it holds that
\begin{align*}
    \BE[\Norm{\vg_{t,i}-\nabla f_{i}(\vx_{t,i})}] \leq   \frac{2 \sqrt{2}\sigma}{b^{1-1 / p}}
\end{align*}
for all $i\in[n]$.
\end{lem}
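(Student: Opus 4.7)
The plan is to apply Lemma~\ref{lem:heavy-tailed_cite} to a centered, rescaled version of the stochastic gradient noise. First I would set
$$\vv_k := \tfrac{1}{b}\bigl(\nabla F_i(\vx_{t,i};\vxi_{t,i,k}) - \nabla f_i(\vx_{t,i})\bigr), \qquad k\in[b],$$
so that $\vg_{t,i}-\nabla f_i(\vx_{t,i}) = \sum_{k=1}^{b} \vv_k$. By Assumption~\ref{asm:pBCM} together with the independence of the samples $\vxi_{t,i,1},\dots,\vxi_{t,i,b}$, the sequence $\{\vv_k\}$ is mean-zero and adapted to its natural filtration in the required sense, so Lemma~\ref{lem:heavy-tailed_cite} (with the dimension identified via the ambient $\BR^d$) directly yields
$$\BE\Norm{\vg_{t,i}-\nabla f_i(\vx_{t,i})} \leq 2\sqrt{2}\,\BE\!\left[\Bigl(\sum_{k=1}^{b}\Norm{\vv_k}^{p}\Bigr)^{1/p}\right].$$

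Next I would bound the right-hand side using two standard steps. Since $p\in(1,2]$, the map $x\mapsto x^{1/p}$ is concave on $[0,\infty)$, so Jensen's inequality gives
$$\BE\!\left[\Bigl(\sum_{k=1}^{b}\Norm{\vv_k}^{p}\Bigr)^{1/p}\right] \leq \Bigl(\BE\sum_{k=1}^{b}\Norm{\vv_k}^{p}\Bigr)^{1/p}.$$
Pulling the factor $1/b$ out of $\vv_k$ and applying the $p$-BCM bound to each identically distributed summand gives $\BE\Norm{\vv_k}^{p} \leq \sigma^{p}/b^{p}$, hence $\BE\sum_{k=1}^{b}\Norm{\vv_k}^{p} \leq \sigma^{p}/b^{\,p-1}$. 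Taking the $p$-th root produces the factor $\sigma/b^{\,1-1/p}$, and chaining the three displays yields the claimed bound.

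The argument is essentially bookkeeping once Lemma~\ref{lem:heavy-tailed_cite} is invoked, so there is no real obstacle. The only detail that merits attention is the Jensen step, which requires $p\geq 1$; together with the rescaling by $1/b$ in each summand and the sum of $b$ identical $p$-th moments, this is exactly what produces the sub-Gaussian-like rate $b^{-(1-1/p)}$ rather than the classical $b^{-1/2}$, reflecting the weaker tail condition imposed by the $p$-BCM assumption.
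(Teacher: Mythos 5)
Your proof is correct and follows essentially the same route as the paper: both apply Lemma~\ref{lem:heavy-tailed_cite} to the (rescaled) centered gradient noise, then use Jensen's inequality for the concave map $x\mapsto x^{1/p}$ together with Assumption~\ref{asm:pBCM} to obtain the $\sigma/b^{1-1/p}$ rate. The only difference is cosmetic bookkeeping — you keep the factor $1/b$ inside each summand while the paper pulls it out before invoking Jensen — so there is nothing substantive to add.
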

\begin{proof}
We let $\vdelta_{k}:=\frac{1}{b}\left(\nabla F_{i}(\vx_{t,i} ; \xi_{t,i,k})-\nabla f_{i}(\vx_{t,i})\right)$.
According to Lemma \ref{lem:heavy-tailed_cite} with $\vv_k=\vdelta_k$, we have
\begin{align}\label{eq:exp-dk}
    &\mathbb{E}\left[\Norm{\sum_{k=1}^{b} \vdelta_{k}}\right] 
    \leq 2 \sqrt{2} \mathbb{E}\left[\left(\sum_{k=1}^{b}\Norm{\vdelta_{k}}^{p}\right)^{1 / p}\right]= 2 \sqrt{2} \mathbb{E}\left [\left(\frac{1}{b^{p}} \sum_{k=1}^{b}\Norm{\nabla F_{i}(\vx_i ; \xi_{t,i,k})-\nabla f_{i}(\vx_i)}^{p}\right)^{1 / p}\right]. 
\end{align}
Then it holds
\begin{align*}    
\begin{split}    
& \BE\left[\Norm{\vg_{t,i}-\nabla f_{i}(\vx_{t,i})}\right]  
=\mathbb{E}\left[\Norm{\sum_{k=1}^{b} \vdelta_k}\right] \\
 \leq & 2 \sqrt{2} \mathbb{E}\left[\frac{1}{b}\left(\sum_{k=1}^{b}\Norm{\nabla F_{i}(\vx_{t,i} ; \xi_{t,i, k})-\nabla f_{i}(\vx_{t,i})}^{p}\right)^{1 / p}\right]\\
= &\frac{2 \sqrt{2}}{b} \mathbb{E}\left[\left(\sum_{k=1}^{b}\Norm{\nabla F_{i}(\vx_{t,i} ; \xi_{t,i, k})-\nabla f_{i}(\vx_{t,i})}^{p}\right)^{1 / p}\right]\\
\leq & \frac{2 \sqrt{2}}{b} \left(\mathbb{E}\left[\sum_{k=1}^{b}\Norm{\nabla F_{i}(\vx_{t,i} ; \xi_{t,i, k})-\nabla f_{i}(\vx_{t,i})}^{p}\right]\right)^{1 / p}\\
   \leq  & \frac{2 \sqrt{2}}{b} \cdot b^{1 / p} \sigma= \frac{2 \sqrt{2}\sigma}{b^{1-1 / p}},
\end{split}
\end{align*}
where the first inequality is based on equation (\ref{eq:exp-dk}), the second inequality is based on Jensen's inequality and the last inequality is based on Assumption \ref{asm:pBCM}.
\end{proof}

We then provide some results for communication protocol associated with the row-stochastic matrix $\mA\in\BR^{n\times n}$.

\begin{lem}[{\citet[Lemma 12]{liang2025rowstochastic}}]\label{lem:DiagnoseError} 
Suppose the matrix $\mA\in\BR^{n\times n}$ satisfies Assumption \ref{asm:primitive}, then the diagonal matrix $\mD_{t}=\operatorname{Diag}(\mA^{(t+1)K})$ holds
\begin{align*}
    \Norm{\vpi^{\top} \mD_{t}^{-1}-\mathbf{1}_{n}^{\top}} \leq \theta\sqrt{n \kappa} \beta^{(t+1)K}
\end{align*}
for all $K\geq 1$, where $\theta=2n\kappa$.
\end{lem}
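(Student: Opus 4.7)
The plan is to reduce the claim to an entrywise analysis of the row vector $\vpi^\top\mD_t^{-1} - \mathbf{1}_n^\top$, whose $i$-th component equals $(\pi_i - [\mA^M]_{ii})/[\mA^M]_{ii}$ with $M := (t+1)K$. Two ingredients will do the job: a bound on the numerator via the contraction of $\mA$ in the weighted norm, and a bound on the reciprocal in the denominator via Proposition \ref{lem:Bounded Diagonals}.

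For the numerator, I would first establish the matrix contraction $\|\mA^M - \mathbf{1}\vpi^\top\|_{\vpi} \le \beta^M$. Since $\mA\mathbf{1}\vpi^\top = \mathbf{1}\vpi^\top$ (from $\mA\mathbf{1}=\mathbf{1}$) and $\mathbf{1}\vpi^\top \mA = \mathbf{1}\vpi^\top$ (from $\vpi^\top\mA=\vpi^\top$), a straightforward induction yields $(\mA - \mathbf{1}\vpi^\top)^M = \mA^M - \mathbf{1}\vpi^\top$, and the bound then follows from the definition $\beta := \|\mA-\mathbf{1}\vpi^\top\|_{\vpi}$. Extracting a diagonal entry, the $i$-th diagonal of $\mPi^{1/2}(\mA^M-\mathbf{1}\vpi^\top)\mPi^{-1/2}$ simplifies to $[\mA^M]_{ii}-\pi_i$ (the weighting factors $\sqrt{\pi_i/\pi_i}$ cancel on the diagonal); applying the elementary inequality $|B_{ii}| \le \|B\|_2$ then yields $|[\mA^M]_{ii}-\pi_i| \le \beta^M$, and relaxing via the standard bound $\|\cdot\|_2 \le \sqrt{\kappa}\|\cdot\|_{\vpi}$ accounts for the extra $\sqrt{\kappa}$ that appears in the stated bound.

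Combining this with the reciprocal estimate $[\mA^M]_{ii}^{-1} \le \theta$ from Proposition \ref{lem:Bounded Diagonals}, each of the $n$ entries of $\vpi^\top\mD_t^{-1}-\mathbf{1}_n^\top$ has magnitude at most $\sqrt{\kappa}\,\theta\,\beta^M$. Summing the squared entries and taking a square root gives the claimed bound $\sqrt{n\kappa}\,\theta\,\beta^M$. The main subtlety is that Proposition \ref{lem:Bounded Diagonals} requires $K$ large enough to satisfy $K \ge 3(1+\ln(\kappa n))/(1-\beta)$ for the reciprocal estimate to hold, whereas the lemma is phrased for all $K \ge 1$; in the parameter regime actually used downstream ($K = \tilde{\Theta}(1/(1-\beta))$ as in Theorem \ref{thm:main}), this hypothesis is automatic, so coupling the two estimates poses no real obstacle.
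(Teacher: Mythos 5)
Your decomposition is the natural one, and the two ingredients are handled correctly: the identity $(\mA-\mathbf{1}\vpi^\top)^M=\mA^M-\mathbf{1}\vpi^\top$ (the paper's Lemma~\ref{lem:A_iterator}) together with submultiplicativity of $\Norm{\cdot}_{\vpi}$ gives $\Norm{\mA^M-\mathbf{1}\vpi^\top}_{\vpi}\le\beta^M$; the diagonal of $\mPi^{1/2}(\mA^M-\mathbf{1}\vpi^\top)\mPi^{-1/2}$ is indeed $[\mA^M]_{ii}-\pi_i$, so $|[\mA^M]_{ii}-\pi_i|\le\beta^M$ (your extra $\sqrt{\kappa}$ relaxation is harmless slack); and dividing by $[\mA^M]_{ii}$ and summing the $n$ squared entries yields $\sqrt{n}\,\theta\,\beta^M\le\sqrt{n\kappa}\,\theta\,\beta^M$ once $[\mA^M]_{ii}^{-1}\le\theta$ is available. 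Note the paper itself does not prove this lemma; it imports it from \citet{liang2025rowstochastic}, so there is no in-paper proof to compare against, but your argument is exactly in the spirit of the surrounding lemmas (Lemma~\ref{lem: matrixerror}, Proposition~\ref{lem:Bounded Diagonals}, Lemma~\ref{lem:diagestimate}).

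The genuine gap is the point you flag and then dismiss. Proposition~\ref{lem:Bounded Diagonals} gives $[\mA^{(t+1)K}]_{ii}^{-1}\le\theta$ only when $(t+1)K\ge 3(1+\ln(\kappa n))/(1-\beta)$, and this restriction is not cosmetic: without some lower bound on the diagonal entries of $\mA^{(t+1)K}$ the stated inequality is simply false for small $K$, so no argument can deliver the lemma ``for all $K\ge1$'' as written. Concretely, take $n=2$ and
\begin{align*}
\mA=\begin{pmatrix}\epsilon & 1-\epsilon\\ 1-\epsilon & \epsilon\end{pmatrix},\qquad \epsilon=0.01,
\end{align*}
which satisfies Assumption~\ref{asm:primitive} with $\vpi=\tfrac12\mathbf{1}$, $\kappa=1$, $\theta=4$, $\beta=1-2\epsilon=0.98$. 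For $t=0$, $K=1$ one has $\mD_0=\epsilon\mI$, so $\Norm{\vpi^\top\mD_0^{-1}-\mathbf{1}^\top}=\sqrt{2}\,(\tfrac{1}{2\epsilon}-1)\approx 69.3$, while the claimed bound is $\theta\sqrt{n\kappa}\,\beta\approx 5.54$. So ``coupling the two estimates poses no real obstacle'' is too quick: the correct conclusion is that your proof establishes the lemma under the additional hypothesis $(t+1)K\ge 3(1+\ln(\kappa n))/(1-\beta)$ (equivalently, whenever Proposition~\ref{lem:Bounded Diagonals} applies), which is the regime in which the paper actually invokes it (Theorem~\ref{thm:main} sets $K=\tilde\Theta(1/(1-\beta))$, and Lemma~\ref{lem:diagestimate} already carries this hypothesis), and that the blanket ``for all $K\ge1$'' in the statement should be read as carrying that same restriction rather than as something your argument — or any argument — can recover.
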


\begin{lem}\label{lem:A_iterator}
    Following notations and assumptions of Proposition \ref{prop:Perron-Frobenius}, we have 
    \begin{align}\label{recursion:AKpi}
    \mA^K-\boldsymbol{1}\vpi^\top = (\mA-\boldsymbol{1}\vpi^\top)^K.
    \end{align}
\begin{proof}
We prove this result by induction. 
\begin{enumerate}
    \item For the induction base $K=1$, equation (\ref{recursion:AKpi})  clearly holds.
    \item For the induction step, we suppose equation (\ref{recursion:AKpi}) holds for all $K=m\geq 1$. Then for the case of $K=m+1$, we have 
    \begin{align*}
          & \mA^{m+1}-\boldsymbol{1}\vpi^\top \\
        =& \mA^{m+1}-\boldsymbol{1}\vpi^\top \mA=(\mA^m-\boldsymbol{1}\vpi^\top)\mA \\
        =&(\mA-\boldsymbol{1}\vpi^\top)^m\mA=(\mA-\boldsymbol{1}\vpi^\top)^m(\mA-\boldsymbol{1}\vpi^\top+\boldsymbol{1}\vpi^\top)\\
        =&(\mA-\boldsymbol{1}\vpi^\top)^{m+1}+(\mA-\boldsymbol{1}\vpi^\top)^m\boldsymbol{1}\vpi^\top, 
    \end{align*}
    where the second line is based on the fact $\vpi^{\top} \mA=\vpi^{\top}$ from Proposition \ref{prop:Perron-Frobenius};
    the third line is based on the induction base.
    For the last term in above inequality, we have
    \begin{align*}
         & (\mA-\boldsymbol{1}\vpi^\top)^m\boldsymbol{1}\vpi^\top\\
        =&(\mA-\boldsymbol{1}\vpi^\top)^{m-1}(\mA-\boldsymbol{1}\vpi^\top)\boldsymbol{1}\vpi^\top\\
        =&(\mA-\boldsymbol{1}\vpi^\top)^{m-1}(\mA \boldsymbol{1}\vpi^\top-\boldsymbol{1}\vpi^\top\boldsymbol{1}\vpi^\top)\\
        =&(\mA-\boldsymbol{1}\vpi^\top)^{m-1}(\boldsymbol{1}\vpi^\top-\boldsymbol{1}\vpi^\top)=\vzero,
    \end{align*}
    where the last line is based on the relations $\mA\vone=\vone$ and $\vpi^\top\vone=1$ from Assumption \ref{asm:primitive} and Proposition \ref{prop:Perron-Frobenius}.
    Combing above results leads to 
    \begin{align*}
        \mA^{m+1}-\boldsymbol{1}\vpi^\top=(\mA-\boldsymbol{1}\vpi^\top)^{m+1},
    \end{align*}
    which completes the proof.
\end{enumerate}
\end{proof}    
\end{lem}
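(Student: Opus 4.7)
The plan is to reduce the identity to a handful of algebraic facts about the rank-one operator $\mP := \vone\vpi^\top$. First I would record that Assumption~\ref{asm:primitive} and Proposition~\ref{prop:Perron-Frobenius} together yield the three relations $\mA\vone=\vone$, $\vpi^\top\mA=\vpi^\top$, and $\vpi^\top\vone=1$. From these one reads off in a single line each:
\begin{align*}
\mP^2 = \vone(\vpi^\top\vone)\vpi^\top = \mP,\qquad
\mA\mP = (\mA\vone)\vpi^\top = \mP,\qquad
\mP\mA = \vone(\vpi^\top\mA) = \mP,
\end{align*}
so $\mP$ is an idempotent that is ``absorbing'' for $\mA$ on both sides.

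Next, setting $\mB := \mA - \mP$, these identities immediately give $\mP\mB = \mP\mA - \mP^2 = \vzero$ and $\mB\mP = \mA\mP - \mP^2 = \vzero$. The cleanest route to the claim is then to expand $\mA^K = (\mP + \mB)^K$: every mixed monomial in this expansion contains at least one adjacent pair $\mP\mB$ or $\mB\mP$ and therefore vanishes, so the expansion collapses to $\mP^K + \mB^K = \mP + \mB^K$. Rearranging yields $\mB^K = \mA^K - \mP$, which is precisely equation~(\ref{recursion:AKpi}).

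Equivalently, one could organize the argument as an induction on $K$: the base case $K=1$ is immediate, and for the inductive step one expands $(\mA-\mP)^{m+1} = (\mA^m-\mP)(\mA-\mP) = \mA^{m+1} - \mA^m\mP - \mP\mA + \mP^2$, then iterates $\mA\mP=\mP$ to collapse $\mA^m\mP$ to $\mP$, leaving $\mA^{m+1}-\mP$ as desired.

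The only subtlety worth pointing out is that the two absorbing identities $\mA\mP=\mP$ and $\mP\mA=\mP$ come from different assumptions: the first uses $\mA\vone=\vone$ (row-stochasticity of $\mA$ from Assumption~\ref{asm:primitive}), while the second uses $\vpi^\top\mA=\vpi^\top$ (the defining property of the Perron left eigenvector from Proposition~\ref{prop:Perron-Frobenius}). Normalization $\vpi^\top\vone=1$ then enters only through $\mP^2=\mP$. Once these three bookkeeping identities are in place, no obstacle remains and the remainder is just a few lines of algebra.
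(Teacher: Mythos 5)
Your proposal is correct. Its primary route is packaged differently from the paper's: the paper proves the identity by induction on $K$, expanding $\mA^{m+1}-\vone\vpi^\top=(\mA^m-\vone\vpi^\top)\mA$ via $\vpi^\top\mA=\vpi^\top$ and then cancelling the residual term $(\mA-\vone\vpi^\top)^m\vone\vpi^\top$ inside each inductive step using $\mA\vone=\vone$ and $\vpi^\top\vone=1$; you instead isolate once and for all that $\mP=\vone\vpi^\top$ is idempotent with $\mA\mP=\mP\mA=\mP$, hence $\mB=\mA-\mP$ satisfies $\mP\mB=\mB\mP=\vzero$, so in the noncommutative expansion of $(\mP+\mB)^K$ every nonconstant word contains an adjacent pair $\mP\mB$ or $\mB\mP$ and vanishes, leaving $\mA^K=\mP+\mB^K$. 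Both arguments rest on exactly the same three relations, but your projection-calculus version buys a cleaner, induction-free statement that holds for any idempotent absorbing on both sides, while the paper's induction is more pedestrian but requires no bookkeeping about words in the expansion; your sketched inductive alternative is essentially the paper's proof read in the opposite direction (expanding $(\mA-\mP)^{m+1}$ rather than $\mA^{m+1}-\mP$), and is equally valid.
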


\begin{lem}\label{lem: matrixerror} Suppose the matrix $\mA\in\BR^{n\times n}$ satisfies Assumption \ref{asm:primitive}, for all $K \geq 0$, we have
\begin{align*}
    \Norm{\mA^{K}-\mathbf{1 } \vpi^{\top}}\leq \sqrt{n\kappa} \beta^{K}.
\end{align*}
\end{lem}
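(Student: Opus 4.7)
The plan is to reduce the Frobenius-norm bound on $\mA^{K}-\vone\vpi^{\top}$ to the $K$-th power of the weighted spectral-norm bound that is encoded in the definition of $\beta$, paying only a mild conversion factor between the two norms.

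First I would invoke Lemma~\ref{lem:A_iterator} to rewrite the matrix of interest as a pure power,
\begin{align*}
\mA^{K}-\vone\vpi^{\top}=(\mA-\vone\vpi^{\top})^{K}.
\end{align*}
This is the key algebraic identity that allows us to iterate the contraction $\beta$, and without it we would have to track cross terms between $\mA^{K}$ and $\vone\vpi^{\top}$.

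Next I would move to the weighted norm. By definition $\Norm{\mA-\vone\vpi^{\top}}_{\vpi}=\|\mPi^{1/2}(\mA-\vone\vpi^{\top})\mPi^{-1/2}\|_2=\beta$, and since conjugation by $\mPi^{1/2}$ is multiplicative, the spectral norm of the conjugated $K$-th power is bounded by submultiplicativity,
\begin{align*}
\|\mPi^{1/2}(\mA-\vone\vpi^{\top})^{K}\mPi^{-1/2}\|_2
=\|(\mPi^{1/2}(\mA-\vone\vpi^{\top})\mPi^{-1/2})^{K}\|_2\le\beta^{K}.
\end{align*}

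Finally I would convert back to the Frobenius norm. Writing $\mM:=\mA^{K}-\vone\vpi^{\top}$, the identity $\mM=\mPi^{-1/2}(\mPi^{1/2}\mM\mPi^{-1/2})\mPi^{1/2}$ together with the standard inequality $\Norm{\mA\mB\mC}\le\|\mA\|_2\Norm{\mB}\|\mC\|_2$ for the Frobenius norm and the bound $\Norm{\mB}\le\sqrt{n}\|\mB\|_2$ (since $\mB\in\BR^{n\times n}$) gives
\begin{align*}
\Norm{\mM}\le\sqrt{n}\,\|\mPi^{-1/2}\|_2\,\|\mPi^{1/2}\|_2\,\beta^{K}=\sqrt{n\kappa}\,\beta^{K},
\end{align*}
where in the last step I use $\|\mPi^{1/2}\|_2=\sqrt{\max_i\pi_i}$ and $\|\mPi^{-1/2}\|_2=1/\sqrt{\min_i\pi_i}$, so their product equals $\sqrt{\kappa}$ by the definition of the equilibrium skewness.

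There is no real obstacle here; the only subtlety is remembering that $\Norm{\cdot}$ denotes the Frobenius norm while $\Norm{\cdot}_{\vpi}$ lives on the spectral side, so the proof is essentially a bookkeeping exercise in converting between the two and the $\sqrt{n\kappa}$ factor is exactly the price of that conversion.
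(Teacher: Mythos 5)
Your proposal is correct and follows essentially the same route as the paper's proof: both rely on Lemma~\ref{lem:A_iterator} to write $\mA^{K}-\vone\vpi^{\top}=(\mA-\vone\vpi^{\top})^{K}$, use submultiplicativity of the $\mPi^{1/2}$-conjugated spectral norm to get the factor $\beta^{K}$, and convert back to the Frobenius norm at the cost of $\sqrt{n}\,\|\mPi^{1/2}\|_2\|\mPi^{-1/2}\|_2=\sqrt{n\kappa}$. The only difference is the order of the steps (you apply the power identity before the norm conversion, the paper after), which is immaterial.
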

\begin{proof}
Recall that ${\bf\Pi}:= \operatorname{diag}(\vpi)\in\BR^{n\times n}$, then we have
\begin{align*}
\begin{split}    
    \Norm{\mA^{K}-\mathbf{1 } \vpi^{\top}}
=& \Norm{\mPi^{-1 / 2}\left(\mPi^{1 / 2} (\mA^{K}-\mathbf{1 } \vpi^{\top}) \mPi^{-1 / 2}\right) \mPi^{1 / 2}} \\
\leq &  \big\|\mPi^{-1 / 2}\big\|_{2}\big\|\mPi^{1 / 2}\big\|_{2}\big\|\mPi^{1 / 2} (\mA^{K}-\mathbf{1 } \vpi^{\top}) \mPi^{-1 / 2}\big\|\\
\leq &\frac{\sqrt{\operatorname{max}_{i\in [n]}\pi_i}}{\sqrt{\operatorname{min}_{i\in [n]}\pi_i}} \cdot \sqrt{n}\Norm{\mPi^{1 / 2}(\mA^{K}-\mathbf{1 } \vpi^{\top}) \mPi^{-1 / 2}}_{2}\\
=& \sqrt{n\kappa}\Norm{\mA^{K}-\mathbf{1 } \vpi^{\top}}_{\vpi} 
= \sqrt{n\kappa}\Norm{(\mA-\mathbf{1 } \vpi^{\top})^K}_{\vpi} \\
\leq & \sqrt{n\kappa}\Norm{\mA-\mathbf{1 } \vpi^{\top}}^{K}_{\vpi} 
= \sqrt{n\kappa} \beta^{K},
\end{split}
\end{align*}
where the first three lines are based on the property of spectral norm and the definitions of $\mPi$ and $\kappa$;
the second-to-last line is base on Lemma \ref{lem:A_iterator};
the last line is based on the fact
\begin{align*}
   & \Norm{(\mA-\mathbf{1 } \vpi^{\top})^K}_{\vpi}
= \Norm{\mPi^{1/2}(\mA-\mathbf{1 } \vpi^{\top})^K\mPi^{-1/2}}_2 \\
= & \Norm{(\mPi^{1/2}(\mA-\mathbf{1 } \vpi^{\top})\mPi^{-1/2})^K}_2 \\
\leq & \Norm{\mPi^{1/2}(\mA-\mathbf{1 } \vpi^{\top})\mPi^{-1/2}}_2^K
= \Norm{\mA-\mathbf{1 } \vpi^{\top}}_{\vpi}^{K},
\end{align*}
where the last line is based on the triangle inequality.
\end{proof}

\begin{lem}\label{lem: iteration error for D} 
Suppose the matrix $\mA\in\BR^{n\times n}$ satisfies Assumption \ref{asm:primitive} and let $\mD_{t}=\operatorname{Diag}(\mA^{(t+1)K})$, then we have 
\begin{align*}
    \Norm{\mD_{t}^{-1}-\mD_{t+1}^{-1}} \leq 2 \theta n^2\kappa^{3/2} \beta^{(t+1)K}.
\end{align*}
\end{lem}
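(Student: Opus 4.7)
The plan is to reduce the bound to the standard matrix-difference identity
\[
\mD_t^{-1}-\mD_{t+1}^{-1} \;=\; \mD_t^{-1}\bigl(\mD_{t+1}-\mD_t\bigr)\mD_{t+1}^{-1},
\]
which is valid because both $\mD_t$ and $\mD_{t+1}$ are invertible diagonal matrices under Assumption \ref{asm:primitive} and the choice of $K$ (so that Proposition \ref{lem:Bounded Diagonals} applies to every power $(t+1)K$). Taking Frobenius norms and using the submultiplicativity with respect to the spectral norm for diagonal factors gives
\[
\Norm{\mD_t^{-1}-\mD_{t+1}^{-1}} \;\leq\; \Norm{\mD_t^{-1}}_2 \,\Norm{\mD_{t+1}-\mD_t}\, \Norm{\mD_{t+1}^{-1}}_2.
\]

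First I would bound the two spectral-norm factors by $\theta=2n\kappa$ directly from Proposition \ref{lem:Bounded Diagonals}, noting that the diagonal entries $[\mA^{(t+1)K}]_{ii}$ and $[\mA^{(t+2)K}]_{ii}$ are the reciprocals whose upper bound is $\theta$. Next I would control the middle term by lifting the diagonal difference to the full matrix difference: since $\mD_{t+1}-\mD_t=\operatorname{Diag}\bigl(\mA^{(t+2)K}-\mA^{(t+1)K}\bigr)$, the Frobenius norm of the diagonal part is no larger than the Frobenius norm of the whole matrix, so
\[
\Norm{\mD_{t+1}-\mD_t} \;\leq\; \Norm{\mA^{(t+2)K}-\mA^{(t+1)K}}.
\]

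The key step is then to inject the stationary matrix $\mathbf{1}\vpi^\top$ by the triangle inequality and invoke Lemma \ref{lem: matrixerror} twice:
\[
\Norm{\mA^{(t+2)K}-\mA^{(t+1)K}} \;\leq\; \Norm{\mA^{(t+2)K}-\mathbf{1}\vpi^\top} + \Norm{\mA^{(t+1)K}-\mathbf{1}\vpi^\top} \;\leq\; \sqrt{n\kappa}\bigl(\beta^{(t+2)K}+\beta^{(t+1)K}\bigr) \;\leq\; 2\sqrt{n\kappa}\,\beta^{(t+1)K},
\]
using $\beta\in[0,1)$ to absorb $\beta^{(t+2)K}\leq\beta^{(t+1)K}$. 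Combining the three bounds yields $\Norm{\mD_t^{-1}-\mD_{t+1}^{-1}} \leq 2\theta^{2}\sqrt{n\kappa}\,\beta^{(t+1)K}$, which I would then rewrite as $4\theta\, n^{3/2}\kappa^{3/2}\,\beta^{(t+1)K}$ using $\theta=2n\kappa$, and finally relax $n^{3/2}$ to $n^2/2$ (valid for $n\geq 4$; the small cases are absorbed into the constant) to match the stated upper bound $2\theta n^{2}\kappa^{3/2}\beta^{(t+1)K}$.

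There is no real obstacle: the argument is essentially a two-line calculation once one recognizes that the two ingredients — the uniform lower bound on the diagonal of $\mA^{mK}$ from Proposition \ref{lem:Bounded Diagonals} and the exponential closeness of $\mA^{mK}$ to $\mathbf{1}\vpi^{\top}$ from Lemma \ref{lem: matrixerror} — slot into the standard identity $\mD_t^{-1}-\mD_{t+1}^{-1}=\mD_t^{-1}(\mD_{t+1}-\mD_t)\mD_{t+1}^{-1}$. The only mildly subtle point is matching the form of the constant factor exactly, which is why I would keep track of the $\theta$, $n$, and $\kappa$ dependencies separately and only collapse them at the very end.
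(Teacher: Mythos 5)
Your argument is correct in substance but follows a genuinely different route from the paper. The paper's proof is a one-line reduction: it bounds $\Norm{\mD_t^{-1}-\mD_{t+1}^{-1}} \le \sqrt{n}\,\Norm{\mD_t^{-1}-\mD_{t+1}^{-1}}_2$ and then cites an external spectral-norm estimate (Lemma 12 of \citet{liang2025rowstochastic}), $\Norm{\mD_t^{-1}-\mD_{t+1}^{-1}}_2 \le 2\theta\sqrt{\kappa^3n^3}\,\beta^{(t+1)K}$, so the heavy lifting is outsourced. You instead give a self-contained derivation using only this paper's toolkit: the identity $\mD_t^{-1}-\mD_{t+1}^{-1}=\mD_t^{-1}(\mD_{t+1}-\mD_t)\mD_{t+1}^{-1}$, the diagonal bound of Proposition \ref{lem:Bounded Diagonals} for the two outer factors, and Lemma \ref{lem: matrixerror} (via the triangle inequality through $\mathbf{1}\vpi^\top$) for the middle factor. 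Each step is valid: the identity holds since the diagonal matrices are invertible, $\Norm{ABC}\le\Norm{A}_2\Norm{B}\Norm{C}_2$, the Frobenius norm of the diagonal part is dominated by that of the full matrix, and $\beta^{(t+2)K}\le\beta^{(t+1)K}$ since $\beta<1$. Your route buys independence from the external reference (and, for $n\ge4$, a slightly sharper bound $4\theta n^{3/2}\kappa^{3/2}\beta^{(t+1)K}$); the paper's route buys brevity. As you note, your argument requires $K\ge 3(1+\ln(\kappa n))/(1-\beta)$ so that Proposition \ref{lem:Bounded Diagonals} applies to the powers $(t+1)K$ and $(t+2)K$; this matches how the lemma is invoked downstream.

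The one loose end is the final constant. Your bound $2\theta^2\sqrt{n\kappa}\,\beta^{(t+1)K}=4\theta n^{3/2}\kappa^{3/2}\beta^{(t+1)K}$ implies the stated $2\theta n^2\kappa^{3/2}\beta^{(t+1)K}$ only when $4n^{3/2}\le 2n^2$, i.e.\ $n\ge4$; for $n\in\{1,2,3\}$ the claim "absorbed into the constant" is not available, because the lemma asserts a specific constant. The slack is purely numerical and easy to remove: the proof of Proposition \ref{lem:Bounded Diagonals} actually shows $[\mA^{mK}]_{ii}\ge \tfrac{1}{n\kappa}-\sqrt{n\kappa}\,\beta^{mK}\ge\tfrac{1-{\rm e}^{-3}}{n\kappa}$ for $mK\ge K$, so each outer factor satisfies $\Norm{\mD^{-1}}_2\le \tfrac{n\kappa}{1-{\rm e}^{-3}}<1.06\,n\kappa$, and your combination then gives at most $2(1.06\,n\kappa)^2\sqrt{n\kappa}\,\beta^{(t+1)K}\le 2.3\,n^{5/2}\kappa^{5/2}\beta^{(t+1)K}\le 2\theta n^{2}\kappa^{3/2}\beta^{(t+1)K}$ for every $n\ge1$. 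With that substitution your proof fully recovers the stated inequality.
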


\begin{proof}
According the property of spectral norm, we have
\begin{align*}
    \Norm{\mD_{t}^{-1}-\mD_{t+1}^{-1}} \leq \sqrt{n}\Norm{\mD_{t}^{-1}-\mD_{t+1}^{-1}}_{2}\leq \sqrt{n}2 \theta \sqrt{\kappa^{3} n^{3}} \beta^{(t+1)K}=2 \theta n^2\kappa^{3/2} \beta^{(t+1)K},
\end{align*}
where the second inequality is based on Lemma 12 of \citet{liang2025rowstochastic}.
\end{proof}

\begin{lem}\label{lem:diagestimate} 
Following the settings of Lemma \ref{lem: iteration error for D} with 
$K \geq 3(1+\ln (\kappa n))/(1-\beta)$, 
we have
\begin{align*}
\Norm{\vpi^\top\mD_t^{-1}} \leq \sqrt{2n}.    
\end{align*}
\end{lem}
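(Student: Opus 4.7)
The plan is to bound $\|\vpi^\top \mD_t^{-1}\|$ by splitting it via the triangle inequality into a reference term that already has the desired magnitude and a residual term that is made negligible by the choice of $K$. Concretely, write
\begin{equation*}
\Norm{\vpi^\top \mD_t^{-1}} \;\le\; \Norm{\vone^\top} + \Norm{\vpi^\top \mD_t^{-1} - \vone^\top} \;=\; \sqrt{n} + \Norm{\vpi^\top \mD_t^{-1} - \vone^\top}.
\end{equation*}

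The residual is exactly what Lemma~\ref{lem:DiagnoseError} controls: it gives
\begin{equation*}
\Norm{\vpi^\top \mD_t^{-1} - \vone^\top} \;\le\; \theta\sqrt{n\kappa}\,\beta^{(t+1)K} \;=\; 2n^{3/2}\kappa^{3/2}\,\beta^{(t+1)K},
\end{equation*}
using $\theta = 2n\kappa$. So I only need to argue that this residual is at most $(\sqrt{2}-1)\sqrt{n}$, which will deliver the advertised bound $\sqrt{2n}$.

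To handle the residual I would use the standard estimate $\beta^K \le \exp(-K(1-\beta))$ (which is where the $(1-\beta)$ in the hypothesis on $K$ shows up) together with monotonicity in $t\ge 0$, so that $\beta^{(t+1)K} \le \beta^K$. The hypothesis $K \ge 3(1+\ln(\kappa n))/(1-\beta)$ then yields
\begin{equation*}
\beta^K \;\le\; e^{-3}\,(\kappa n)^{-3},
\end{equation*}
and substituting back gives
\begin{equation*}
2n^{3/2}\kappa^{3/2}\,\beta^{(t+1)K} \;\le\; \frac{2 e^{-3}}{\kappa^{3/2}\, n^{3/2}} \cdot \sqrt{n} \;\le\; 2 e^{-3}\sqrt{n},
\end{equation*}
where I used $\kappa\ge1$ and $n\ge1$. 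Since $2e^{-3} \approx 0.0996 < \sqrt{2}-1 \approx 0.414$, the residual is dominated by $(\sqrt{2}-1)\sqrt{n}$, and adding $\sqrt{n}$ gives $\|\vpi^\top \mD_t^{-1}\| \le \sqrt{2}\,\sqrt{n} = \sqrt{2n}$.

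There is no real obstacle here; the only thing to be careful about is the bookkeeping of the constants. The threshold $3(1+\ln(\kappa n))/(1-\beta)$ in the hypothesis was chosen precisely so that the bound $\sqrt{n\kappa}\,\theta\,\beta^{(t+1)K}$ from Lemma~\ref{lem:DiagnoseError} collapses to a polynomially small quantity in $n$ and $\kappa$. If one wished to get a tighter constant than $\sqrt{2}$, one could simply increase the factor $3$ in the definition of $K$; the value $\sqrt{2n}$ is just a convenient round bound that is sufficient for later use.
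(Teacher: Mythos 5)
Your argument is correct and is essentially the paper's own proof: the same triangle-inequality split against $\mathbf{1}^\top$, the same appeal to Lemma~\ref{lem:DiagnoseError}, and the same use of $\beta^{K}\leq\exp(-K(1-\beta))\leq e^{-3}(\kappa n)^{-3}$ coming from the choice of $K$ (the paper cites this bound from the proof of Proposition~\ref{lem:Bounded Diagonals} rather than rederiving it, and absorbs the residual as $2/(e^{3}n\kappa)\leq(\sqrt{2}-1)\sqrt{n}$, exactly as you do up to a harmless slack in your intermediate constant). No gaps.
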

\begin{proof}
Based on the triangle inequality, we have
\begin{align*}
    \Norm{\vpi^{\top} \mD_{t}^{-1}} &\leq \Norm{\mathbf{1^\top}}+\Norm{\vpi^{\top} \mD_{t}^{-1}-\mathbf{1}^{\top}} \\
    & \leq \sqrt{n}+\theta \sqrt{n \kappa} \beta^{(t+1)K} \\
&  \leq\sqrt{n}+\frac{2}{{\rm e}^3 n \kappa}\leq\sqrt{2n},
\end{align*}
where the second inequality is based on Lemma \ref{lem:DiagnoseError} and the third inequality is based on the setting of $K$.
\end{proof}

\begin{lem}[{\citet{ye2023multi}},Lemma 12]\label{lem:undirected_norm_ineq}
    For given matrix $\mG=[\vg_1;\cdots;\vg_m]\in \BR^{n\times d}$, we have $\Norm{\mG-\boldsymbol{1}\bar{\vg}}^2\leq \Norm{\mG}^2$, where $\bar{\vg}=\frac{1}{m}\sum_{i=1}^m\vg_i$.
\end{lem}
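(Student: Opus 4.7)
The plan is to recognize $\mG - \boldsymbol{1}\bar{\vg}$ as the image of $\mG$ under a left-multiplication by an orthogonal projection, and then invoke the fact that orthogonal projections are non-expansive in the Frobenius norm. First I would introduce the averaging matrix $\mP := \tfrac{1}{m}\boldsymbol{1}\boldsymbol{1}^\top \in \BR^{m\times m}$. Since $\bar{\vg} = \tfrac{1}{m}\boldsymbol{1}^\top \mG$ as a $1\times d$ row, one has $\boldsymbol{1}\bar{\vg} = \mP \mG$, and therefore $\mG - \boldsymbol{1}\bar{\vg} = (\mI - \mP)\mG$. A direct computation yields $\mP^\top = \mP$ and $\mP^2 = \tfrac{1}{m^2}\boldsymbol{1}(\boldsymbol{1}^\top\boldsymbol{1})\boldsymbol{1}^\top = \mP$, so $\mP$ is a symmetric idempotent; hence $\mI - \mP$ is also symmetric and idempotent, representing orthogonal projection onto the subspace of $\BR^m$ orthogonal to $\spn(\boldsymbol{1})$.

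Next I would apply the Pythagorean identity column by column. Writing $\mG = \mP\mG + (\mI - \mP)\mG$, the columns of the two summands are orthogonal in $\BR^m$ because they live in $\spn(\boldsymbol{1})$ and its orthogonal complement, respectively. Summing $\Norm{\cdot}^2$ over columns gives
\[
\Norm{\mG}^2 = \Norm{\mP\mG}^2 + \Norm{(\mI - \mP)\mG}^2 \geq \Norm{(\mI - \mP)\mG}^2 = \Norm{\mG - \boldsymbol{1}\bar{\vg}}^2,
\]
which is exactly the claim.

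There is essentially no obstacle here, since this is the classical Pythagorean decomposition of a data matrix into its row-mean and the mean-zero deviation. A purely computational alternative would be equally short: expand entrywise as $\Norm{\mG - \boldsymbol{1}\bar{\vg}}^2 = \sum_{i=1}^m \Norm{\vg_i - \bar{\vg}}^2$, use $\sum_i(\vg_i - \bar{\vg}) = \vzero$ so that the cross term cancels in $\sum_i\Norm{\vg_i - \bar{\vg}}^2 = \sum_i \Norm{\vg_i}^2 - m\Norm{\bar{\vg}}^2$, and then discard the nonpositive $-m\Norm{\bar{\vg}}^2$. Either formulation delivers the inequality in a couple of lines, and the projection viewpoint has the added benefit of making explicit \emph{why} the inequality is tight exactly when $\bar{\vg} = \vzero$.
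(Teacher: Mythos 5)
Your proof is correct. The paper itself does not prove this lemma (it cites \citet{ye2023multi}), but your argument --- writing $\mG-\boldsymbol{1}\bar{\vg}=(\mI-\tfrac{1}{m}\boldsymbol{1}\boldsymbol{1}^\top)\mG$ and applying the Pythagorean decomposition $\Norm{\mG}^2=\Norm{\mP\mG}^2+\Norm{(\mI-\mP)\mG}^2$, or equivalently the entrywise identity $\sum_i\Norm{\vg_i-\bar{\vg}}^2=\sum_i\Norm{\vg_i}^2-m\Norm{\bar{\vg}}^2$ --- is the standard proof of this fact and is complete as stated; you also correctly handle the paper's slight dimension slip (the matrix with rows $\vg_1,\dots,\vg_m$ should live in $\BR^{m\times d}$) by working with $\boldsymbol{1}\in\BR^m$ and $\mP\in\BR^{m\times m}$.
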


\section{The Proofs of Results in Section \ref{sec:main}}

This section provides the detailed proofs of theoretical results for proposed DNSGD-PD in Section \ref{sec:main}.

\subsection{The Proof of Proposition \ref{lem:Bounded Diagonals}}
\begin{proof}
We denote $K=r/(1-\beta)$ with $r>3(1+\ln (\kappa n))$, then it holds
\begin{align}\label{ine:bound of beta_K}
\beta^{K}=\exp \left(\frac{r\ln \beta}{1-\beta}\right) \leq \exp (-r)  \leq \frac{1}{{\rm e}^{3} n^{3} \kappa^{3}}.
\end{align}

Here, the first inequality is based on the fact
$\ln x \le x-1$ for all $x\in(0,1)$ which implies
$\ln\beta \le -(1-\beta)$, then it holds
\begin{align*}
    \frac{r\ln \beta}{1-\beta} \le -r,
\end{align*}
which implies
\begin{align*}
    \exp\left(\frac{r\ln \beta}{1-\beta}\right)
\le \exp(-r).
\end{align*}
Furthermore, diagonal entries of $\mA^K$ satisfies
\begin{align*}
\begin{split}
    [\mA^{K}]_{i i}&=\pi_i+[\mA^{K}-\mathbf{1 } \vpi^{\top}]_{i i} \\
    & \geq \operatorname{min}_{j\in[n]}\pi_j-\Norm{\mA^{K}-\mathbf{1 } \vpi^{\top}}_{F} \\
    &\geq \operatorname{min}_{j\in[n]}\pi_j-\sqrt{n \kappa} \beta^{K}\\
    &\geq \frac{1}{n \kappa}-\sqrt{n \kappa} \exp (-r) \\
    &\geq \frac{1}{n \kappa}-\frac{1}{2 n \kappa}=\frac{1}{2 n \kappa}, 
\end{split}
\end{align*}
for all $i\in[n]$, 
where the first inequity is based on the definition of Frobenius norm;
the second inequality is based on Lemma \ref{lem: matrixerror};
the third inequality is based on equation (\ref{ine:bound of beta_K}) and the fact $\operatorname{max}_{j\in[n]}\pi_j\geq 1/n$ that leads to
\begin{align*}
    \operatorname{min}_{j\in[n]}\pi_j
\geq \frac{\operatorname{min}_{j\in[n]}\pi_j}{n\operatorname{max}_{j\in[n]}\pi_j} = \frac{1}{n\kappa};
\end{align*}
and the last line is based on the definition of $r$.
\end{proof}

\subsection{The Proof of Lemma \ref{lem:descent-mean}}
\begin{proof}
Recall that $\vw_{t} = \vpi^{\top} \mX_{t}$, then left-multiply $\vpi^\top$ on both sides of line \ref{line:update-X} in Algorithm \ref{alg:PDNSGD} leads to
\begin{align*}
\vw_{t+1}=\vw_{t}-\eta \vpi^{\top}  \mU_{t}.     
\end{align*}
According to above equation and Assumption \ref{asm:smoothness}, we have
\begin{align}\label{neq:smoothness}  
    f(\vw_{t+1}) \leq f(\vw_t)- \eta\left\langle \nabla f(\vw_{t}), \vpi^{\top} \mU_{t}\right\rangle+\frac{\eta^{2} L}{2}\Norm{\vpi^{\top} \mU_{t}}^{2}.
\end{align}
We then focus on the inner product term on the right-hand side of (\ref{neq:smoothness}) as follows
\begin{align}\label{neq:inner} 
\begin{split}    
&- \eta\left\langle \nabla f(\vw_{t}), \vpi^{\top} \mU_{t}\right\rangle \\
&=-\frac{\eta}{n}\left\langle n\nabla f\left(\vw_{t}\right), \sum_{i=1}^n \frac{\pi_i\vv_{t,i}}{\Norm{\vv_{t,i}}}\right\rangle \\
&=-\frac{\eta}{n} \left(\sum_{i=1}^n \frac{\pi_{i} n\nabla f(\vw_{t})^\top\vv_{t,i}}{\Norm{\vv_{t,i}}}\right)\\
&\leq  -\frac{\eta}{n}\left(\sum_{i=1}^n \pi_{i}  (\Norm{n\nabla f(\vw_{t})}-2\Norm{n\nabla f(\vw_{t})-\vv_{t,i}})\right)\\
&=-\frac{\eta}{n} \left( \Norm{n\nabla f(\vw_{t})}-2 \sum_{i=1} ^n\pi_{i}\Norm{n\nabla f(\vw_{t})-\vv_{t,i}} \right)\\
&\leq -\eta \Norm{\nabla f(\vw_{t})}  + \frac{2\eta}{n} \left( \sum_{i=1}^n \pi_{i}\Norm{n\nabla f(\vw_{t})-\vpi^{\top} \mV_{t}}+\sum_{i=1}^n \pi_{i}\Norm{\vpi^{\top} \mV_{t}-\vv_{t,i}} \right)\\
&=-\eta \Norm{\nabla f(\vw_{t})}  + \frac{2\eta}{n} \left(\Norm{n\nabla f(\vw_{t})-\vpi^{\top} \mV_{t}}+\sum_{i=1}^n \pi_{i}\Norm{\vpi^{\top} \mV_{t}-\vv_{t,i}}\right)\\
& \leq -\eta \Norm{\nabla f(\vw_{t})}  + \frac{2\eta}{n} \left( \Norm{n\nabla f(\vw_{t})-\vpi^{\top} \mV_{t}}+ \Norm{\mV_{t}-\mathbf{1} \vpi^{\top} \mV_{t}} \right),
\end{split}
\end{align}
where the equalities uses the fact $\sum_{i=1}^n\pi_i=1$; the first inequality is based on Lemma \ref{lem: baseineq}; 
the second inequality is based on triangle inequality; 
the last step uses Cauchy--Schwarz inequality and the fact $\sum_{i=1}^n\pi_i^2\leq 1$ which leads to 
\begin{align*}
    \sum_{i=1}^n \pi_{i}\Norm{\vpi^{\top} \mV_{t}-\vv_{t,i}}\leq \sqrt{\sum_{i=1}^n\pi_i^2}\sqrt{\sum_{i=1}^n \Norm{\vpi^\top\mV_t-\vv_{t,i}}^2} 
    \leq \Norm{\mathbf{1} \vpi^{\top} \mV_{t}-\mV_{t}}.
\end{align*}
Combing equations (\ref{neq:smoothness}) and (\ref{neq:inner}), we obtain 
\begin{align*}
     f(\vw_{t+1})
    & \leq f(\vw_{t})-\eta \Norm{\nabla f(\vw_{t})}  + \frac{2\eta}{n}  \left(\Norm{n\nabla f(\vw_{t})-\vpi^{\top} \mV_{t}}+ \Norm{\mV_{t}-\mathbf{1} \vpi^{\top} \mV_{t}}  \right) +\frac{\eta^{2} L}{2}\Norm{\vpi^{\top} \mU_{t}}^{2}\\
    &\leq f(\vw_{t})-\eta \Norm{\nabla f(\vw_{t})} + \frac{2\eta}{n} \Norm{n\nabla f(\vw_{t}) -\vpi^{\top} \mV_{t}} + \frac{2\eta}{n}\Norm{\mV_{t}-\mathbf{1} \vpi^{\top} \mV_{t}}+\frac{\eta^{2} L}{2},
\end{align*}
where the last step is based on facts that the norm of each row of $\mU_t$ is one and $\sum_{i=1}^n\pi_i^2\leq 1$.
Taking expectations on both sides of the above inequality, then we finish the proof.
\end{proof}

\subsection{The Proof of Lemma \ref{lem:Estimate descent deviation}}
\begin{proof}
For the update of $\mV_{t+1}$ in Algorithm \ref{alg:PDNSGD}, we have
\begin{align*}
    \mV_{t+1} = \mA^{K}(\mV_{t} + \mD_{t+1}^{-1} \mG_{t+1}  - \mD_{t}^{-1} \mG_{t}).
\end{align*}
Left-multiplying $\vpi^\top$ on both sides of above equation yields
\begin{align*}
&\vpi^\top\mV_{t+1}- \vpi^\top\mD_{t+1}^{-1} \mG_{t+1} \\ 
= & \vpi^\top\left(\mA^{K}(\mV_{t} + \mD_{t+1}^{-1} \mG_{t+1}  - \mD_{t}^{-1} \mG_{t}) - \mD_{t+1}^{-1} \mG_{t+1}\right) \\
= & \vpi^\top\mV_{t} + \vpi^\top\mD_{t+1}^{-1} \mG_{t+1}  - \vpi^\top\mD_{t}^{-1} \mG_{t} - \vpi^\top\mD_{t+1}^{-1} \mG_{t+1} \\
= & \vpi^\top\mV_{t} - \vpi^\top\mD_{t}^{-1} \mG_{t}\\
= & \cdots \\
= & \vpi^\top\mV_0- \vpi^\top\mD_0^{-1} \mG_0 \\
= & \vpi^\top \mA^{\hat{K}} \mD_0^{-1}\mG_0-\vpi^\top\mD_0^{-1} \mG_0 \\
= & \vzero,
\end{align*}
where the second equality is based on the fact $\vpi^{\top} \mA=\vpi^{\top}$ from Proposition \ref{prop:Perron-Frobenius};
the second-to-last line is based on the setting of~$\mV_0=\mA^{\hat{K}} \mD_0^{-1}\mG_0$;
and the last step again uses the fact $\vpi^{\top} \mA=\vpi^{\top}$ from Proposition \ref{prop:Perron-Frobenius}.
Therefore, for all $t\geq 0$, it holds 
\begin{align}\label{eq:piVDG}
 \vpi^{\top}\mV_t=\vpi^{\top} \mD_{t}^{-1} \mG_{t}.
\end{align}

We now consider the statement of this lemma. According equation (\ref{eq:piVDG}) and the triangle inequality, we split the upper bound of $\Norm{n\nabla f(\vw_{t})-\vpi^{\top} \mV_t}$ as follows
\begin{align}\label{eq:diff-grad-V}
\begin{split}
& \Norm{n\nabla f(\vw_{t})-\vpi^{\top} \mV_t} = \Norm{n\nabla f(\vw_{t})-\vpi^{\top} \mD_{t}^{-1} \mG_{t}}    \\
\leq & \underbrace{\Norm{n\nabla f(\vw_{t})-\mathbf{1}^{\top} \nabla \mF(\mX_{t})}}_{A_{1}} 
+\underbrace{\Norm{\mathbf{1}^\top\nabla \mF(\mX_{t})-\vpi^{\top}\mD_{t}^{-1} \nabla \mF(\mX_{t})}}_{A_{2}}
+\underbrace{\Norm{\vpi^{\top} \mD_{t}^{-1}(\nabla \mF(\mX_{t})-\mG_{t})}}_{A_3}. 
\end{split}
\end{align}
For the term $A_1$, it holds
\begin{align}\label{eq:A_1}
\begin{split}
    & A_{1}=\Norm{n\nabla f(\vw_{t})- \sum_{i=1}^{n} \nabla f_{i}(\vx_{t,i})}= \Norm{\sum_{i=1}^{n}(\nabla f_{i}(\vw_{t})-\nabla f_{i}(\vx_{t,i}))}\\
    &\leq L\sum_{i=1}^{n}\Norm{\vx_{t,i}-\vw_{t}}
     \leq L\sqrt{n}\sqrt{\sum_{i=1}^{n}\Norm{\vx_{t,i}-\vw_{t}}^{2}} \\
    & =L\sqrt{n}\Norm{\mX_{t}-\mathbf{1} \vw_{t}} = L\sqrt{n}\Norm{\mX_{t}-\mathbf{1}\vpi^\top \mX_{t}},
\end{split}
\end{align}
where the fist inequality is based on the Assumption \ref{asm:smoothness} and the last inequality is based on the Cauchy–Schwarz inequality.

For the term term $A_2$, it holds
\begin{align}\label{eq:A_2}
\begin{split}
    A_2&=\Norm{(\mathbf{1}^\top-\vpi^{\top}\mD_{t}^{-1}) \nabla \mF(\mX_{t})}\\
&\leq \Norm{\mathbf{1}^\top-\vpi^{\top}\mD_{t}^{-1}}\Norm{\nabla \mF(\mX_{t})}\\
& \leq \theta\sqrt{n \kappa}\beta^{(t+1)K}\Norm{\nabla \mF(\mX_{t})}\\
&\leq \theta\sqrt{n \kappa} \beta^{(t+1)K} \left( \Norm{\nabla \mF(\mX_{t})-\nabla \mF(\mathbf{1}\vw_{t})}+\Norm{\nabla \mF(\mathbf{1}\vw_{t})} \right)\\
&\leq \theta\sqrt{n \kappa} \beta^{(t+1)K} \left(L\Norm{\mX_{t}-\mathbf{1} \vpi^{\top} \mX_{t}}+ \Norm{\nabla \mF(\mathbf{1}\vw_{t})}\right)\\
&\leq \theta\sqrt{n \kappa} \beta^{(t+1)K} \left(L\Norm{\mX_{t}-\mathbf{1}\vpi^{\top} \mX_{t}}+\sqrt{n} \Norm{\nabla f(\vw_{t})}\right),
\end{split}
\end{align}
where the second inequality is based on Lemma \ref{lem:DiagnoseError}; the fourth inequality is base on the Assumption \ref{asm:smoothness};
the other steps are based on triangle inequalities and the definition of Frobenius norm.

For the term $A_3$, we split its upper bounds as follows
\begin{align}\label{eq:A3-1}
    A_3 =  \underbrace{\Norm{(\vpi^{\top} \mD_{t}^{-1}-\boldsymbol{1}^\top)(\nabla \mF(\mX_{t})-\mG_{t})}}_{B_1}  +\underbrace{\Norm{\boldsymbol{1}^\top(\nabla \mF(\mX_{t})-\mG_{t})}}_{B_2}.
\end{align}
We bound the term $B_1$ as 
\begin{align}\label{eq:B1}
\begin{split}
        \BE[B_1]
    &\leq \Norm{\vpi^{\top} \mD_{t}^{-1}-\boldsymbol{1}^\top}\BE\left[\Norm{\nabla \mF(\mX_{t})-\mG_{t}}\right] \\
    &\leq \theta\sqrt{n\kappa}\beta^{(t+1)K} \BE\left[\sqrt{n}\max_{i\in[n]}\Norm{\vg_{t,i}-\nabla f_{i}(\vx_{t,i})}\right]\\
    & \leq  \theta\sqrt{n\kappa}\beta^{(t+1)K} \cdot \frac{2 \sqrt{2n}\sigma}{b^{1-1 / p}},
\end{split}
\end{align}
where the second inequality is due to Lemma \ref{lem:DiagnoseError} and the last inequality follows from Lemma \ref{lem:heavy-tailed}.
We then bound the term~$B_2$ as
\begin{align}\label{eq:B2}
\begin{split}
    \BE[B_2] &= \BE\left[\Norm{\sum_{i=1}^n \left( \nabla f_i(\vx_{t,i}) - \vg_{t,i}\right)}\right]  \\
    &=\BE\left[\Norm{\sum_{i=1}^n \left(\nabla f_i(\vx_{t,i}) - \frac{1}{b}\sum_{k=1}^b\nabla F_i(\vx_{t,i};\xi_{t,i,k})\right)}\right]  \\
    &\leq 2\sqrt{2}\BE \left[\left(\sum_{i=1}^n\Norm{\nabla f_i(\vx_{t,i}) - \frac{1}{b}\sum_{k=1}^b\nabla F_i(\vx_{t,i};\xi_{t,i,k})}^p\right)^\frac{1}{p}\right]\\
   &\leq 2\sqrt{2} \left(\BE\left[\sum_{i=1}^n\Norm{\nabla f_i(\vx_{t,i}) - \frac{1}{b}\sum_{k=1}^b\nabla F_i(\vx_{t,i};\xi_{t,i,k})}^p\right]\right)^\frac{1}{p} \\
   &\leq 2\sqrt{2}\left(n \left(\frac{2\sqrt{2}\sigma}{b^{1-1/p}}\right)^p\right)^{\frac{1}{p}} = \frac{8n^{\frac{1}{p}}\sigma}{b^{1-1/p}},
\end{split}
\end{align}
where the first inequality is based on Lemma \ref{lem:heavy-tailed_cite};
the second inequality is based on the Jensen’s inequality;
and the last inequality is based on Lemma \ref{lem:heavy-tailed}.
Combining equations (\ref{eq:B1}) and (\ref{eq:B2}) with equation (\ref{eq:A3-1}), we obtain 
\begin{align}\label{eq:A_3}
    \BE[A_3] \leq  \frac{2\sqrt{2\kappa}n\theta\beta^{(t+1)K}\sigma}{b^{1-1 / p}}+\frac{8n^{\frac{1}{p}}\sigma}{b^{1-1/p}}.
\end{align}
Combining equations (\ref{eq:A_1}), (\ref{eq:A_2}), and (\ref{eq:A_3}) with equation (\ref{eq:diff-grad-V}), we  completes the proof.
\end{proof}

\subsection{The Proof of Lemma \ref{lem:consensus_x}}
\begin{proof} 

According to equations $\mA\vone=\vone$ and $\vpi^\top\vone=1$ from Assumption \ref{asm:primitive} and Proposition \ref{prop:Perron-Frobenius}, we have
\begin{align}\label{eq:eq1}
(\mA^{K}-\mathbf{1} \vpi^{\top})\boldsymbol{1}\vpi^\top
= (\mA^{K}\boldsymbol{1}\vpi^\top-\mathbf{1} (\vpi^{\top}\boldsymbol{1})\vpi^\top)
= \boldsymbol{1}\vpi^\top-\mathbf{1}\vpi^\top = \vzero.
\end{align}
Based on the update $\mX_{t+1}=\mA^{K}(\mX_{t}-\eta \mU_{t})$ in Algorithm \ref{alg:PDNSGD}, we have
\begin{align}\label{eq:X_t1_bound}
\begin{split}
& \Norm{\mX_{t+1}-\mathbf{1}\vpi^\top  \mX_{t+1}} \\
=& \Norm{(\mI-\mathbf{1} \vpi^{\top}) \mA^{K}(\mX_{t}-\eta \mU_{t})}\\
=&\Norm{(\mA^{K}-\mathbf{1} \vpi^{\top})(\mX_{t}-\eta \mU_{t})}\\
=&\Norm{(\mA^{K}-\mathbf{1} \vpi^{\top}) \mX_{t}-\eta(\mA^{K}-\mathbf{1}\vpi^{\top}) \mU_{t}}\\
=&\Norm{(\mA^{K}-\mathbf{1} \vpi^{\top}) (\mX_{t}-\mathbf{1} \vpi^{\top} \mX_{t})-\eta(\mA^{K}-\mathbf{1} \vpi^{\top}) (\mU_{t}-\mathbf{1} \vpi^{\top} \mU_{t})}\\
\leq  &\Norm{\mA^{K}-\mathbf{1} \vpi^{\top}}\Norm{\mX_{t}-\mathbf{1} \vpi^{\top} \mX_{t}-\eta(\mU_{t}-\mathbf{1} \vpi^{\top} \mU_{t})}\\
\leq & \rho\left(\Norm{\mX_{t}-\mathbf{1}\vpi^\top  \mX_{t}}+\eta\Norm{\mU_{t}-\mathbf{1}\vpi^\top  \mU_{t}}\right) \\
\leq & \rho\left(\Norm{\mX_{t}-\mathbf{1}\vpi^\top  \mX_{t}}+2\sqrt{n}\eta\right),
\end{split}
\end{align}
where the second equality is based on the fact $\vpi^\top\mA=\vpi^\top$ from Proposition \ref{prop:Perron-Frobenius};
the forth equality is based on equation (\ref{eq:eq1});
the first inequality is based on the triangle inequality;
the second inequality is based on the definition of $\rho$.
Additionally, the last inequality in above derivation is based on
\begin{align*}
    \Norm{\mU_{t}-\mathbf{1}\vpi^\top  \mU_{t}} \leq\Norm{\mU _t}+\Norm{\mathbf{1}\vpi^\top  \mU_{t}}
    =\sqrt{n}+\sqrt{ n}\Norm{\vpi^\top \mU_t}
    \leq\sqrt{n} +\sqrt{n}\leq 2\sqrt{n},
\end{align*}
where the first inequality is based on triangle inequality;
the second inequality is based on the facts the norm of each row of $\mU$ is one that leads to
\begin{align}\label{eq:piUt}
    \Norm{\vpi^\top \mU_t} 
= \Norm{\sum_{k=1}^n \pi_k\cdot\dfrac{ \vv_{t,1}}{\Norm{\vv_{t,1}}}} 
\leq \sum_{k=1}^n \pi_k \Norm{\dfrac{ \vv_{t,1}}{\Norm{\vv_{t,1}}}} 
= \sum_{k=1}^n \pi_k = 1.
\end{align}
where we the first step uses the definition of $\mU_t$ in Algorithm \ref{alg:PDNSGD}; the second step is based on triangle inequality; 
and the last step is based on Proposition \ref{prop:Perron-Frobenius}.
are based on the fact that . Hence, we finish the proof.
\end{proof}

\subsection{The Proof of Lemma \ref{lem:consensus_v}}
\begin{proof}
According to the update $\mV_{t+1}=\mA^{K}(\mV_{t}+\mD_{t+1}^{-1} \mG_{t+1}-\mD_{t}^{-1} \mG_{t})$ in Algorithm \ref{alg:PDNSGD}, we have
\begin{align}\label{eq:error-V}
\begin{split}
    &\Norm{\mV_{t+1}-\mathbf{1} \vpi^{\top} \mV_{t+1}} \\
    =&\Norm{(\mA^{K}-\mathbf{1} \vpi^\top\mA^K)(\mV_{t}+\mD_{t+1}^{-1} \mG_{t+1}-\mD_{t}^{-1} \mG_{t})} \\
    =&\Norm{(\mA^{K}-\mathbf{1} \vpi^\top)(\mV_{t}+\mD_{t+1}^{-1} \mG_{t+1}-\mD_{t}^{-1} \mG_{t})} \\
    =&\Norm{(\mA^{K}-\mathbf{1} \vpi^{\top}) \mV_t-(\mA^{K}-\mathbf{1} \vpi^{\top})\boldsymbol{1}\vpi^\top\mV_t+(\mA^{K}-\mathbf{1} \vpi^{\top}) (  \mD_{t+1}^{-1} \mG_{t+1}-\mD_{t}^{-1} \mG_{t} ))} \\
    =&\Norm{(\mA^{K}-\mathbf{1} \vpi^{\top})(\mV_t-\vone\vpi^\top\mV_t) + (\mA^{K}-\mathbf{1} \vpi^{\top}) (  \mD_{t+1}^{-1} \mG_{t+1}-\mD_{t}^{-1} \mG_{t} ))} \\
    \leq& \rho\Norm{\mV_t-\vone\vpi^\top\mV_t} + \rho\Norm{\mD_{t+1}^{-1} \mG_{t+1}-\mD_{t}^{-1} \mG_{t}},
\end{split}
\end{align}
where the second equality is based on the fact $\vpi^{\top} \mA=\vpi^{\top}$ from Proposition \ref{prop:Perron-Frobenius}; 
the third equality is based on (\ref{eq:eq1});
and the last step is based on the triangle inequality and the definition $\rho=\Norm{\mA^{K}-\mathbf{1} \vpi^{\top}}$.

We use triangle inequality to bound the term $\Norm{\mD_{t+1}^{-1} \mG_{t+1}-\mD_{t}^{-1} \mG_{t}}$ in the last line of equation (\ref{eq:error-V}) as
\begin{align}\label{eq:error-V1}
        \Norm{\mD_{t+1}^{-1} \mG_{t+1}-\mD_{t}^{-1} \mG_{t}}\leq \underbrace{\Norm{\mD_{t+1}^{-1}(\mG_{t+1}-\mG_{t})}}_{C_1}+\underbrace{\Norm{(\mD_{t+1}^{-1}-\mD_{t}^{-1}) \mG_{t}}}_{C_2}.
\end{align}
We split the term $C_1$ as 
\begin{align}\label{eq:C1}
\begin{split}
    C_1=&\underbrace{\mD_{t+1}^{-1}(\mG_{t+1}-\nabla \mF(\mX_{t+1}))}_{D_1} 
+ \underbrace{\mD_{t+1}^{-1}(\nabla \mF(\mX_{t+1}) - \nabla \mF(\mathbf{1}\vw_{t+1}))}_{D_2}  
+ \underbrace{\mD_{t+1}^{-1}(\nabla \mF(\mathbf{1}\vw_{t+1}) -\nabla \mF(\mathbf{1} \vw_{t}))}_{D_3}\\
& + \underbrace{\mD_{t+1}^{-1}(\nabla \mF(\mathbf{1} \vw_{t})-\nabla \mF(\mX_{t}))}_{D_4} 
+ \underbrace{\mD_{t+1}^{-1}(\nabla \mF(\mX_{t})-\mG_{t})}_{D_5}.
\end{split}
\end{align} 
For the term $D_1$, it holds
\begin{align}\label{ineq:grad error_t+1}
\begin{split}
    \BE[\Norm{D_1}] &=  \BE\left[\sqrt{\sum_{i=1}^n\Norm{[\mD_{t+1}^{-1}]_{ii}(\vg_{t+1,i}-\nabla f_{i}(\vx_{t+1,i}))}^2}\,\right] \\
    &\leq \BE\left[\sqrt{\sum_{i=1}^n\theta^2 \Norm{\vg_{t+1,i}-\nabla f_{i}(\vx_{t+1,i})}^2}\,\right]\\
    &\leq \BE\left[\sqrt{n\theta^2\max_{i\in[n]} \Norm{\vg_{t+1,i}-\nabla f_{i}(\vx_{t+1,i})}^2}\,\right]\\
     &= \sqrt{n}\theta \max_{i\in[n]}\BE\left[ \Norm{\vg_{t+1,i}-\nabla f_{i}(\vx_{t+1,i})}^2\right]\\
    &\leq \frac{2 \sqrt{2n} \theta \sigma}{b^{1-1 / p}},
    \end{split}
\end{align}
where the first inequality is based on Proposition \ref{lem:Bounded Diagonals} and the last step is based on  Lemma \ref{lem:heavy-tailed}.

Similarly, we bound the term $D_5$ as  
\begin{align}\label{ineq:grad error_t}
    \BE[\Norm{D_5}] \leq \frac{2 \sqrt{2n} \theta\sigma}{b^{1-1 / p}}.
\end{align}

For the term $D_2$, it holds 
\begin{align}\label{ineq:grad consensus error_t+1}
\begin{split}
    \Norm{D_2}=&\sqrt{\sum_{i=1}^n \Norm{[\mD_{t+1}^{-1}]_{ii}(\nabla f_{i}(\vx_{t+1,i})-\nabla f(\vw_{t+1}))}^2} \\
    \leq & \sqrt{\sum_{i=1}^n \theta^2 L^2\Norm{\vx_{t+1,i}-\vw_{t+1}}^2}
    =\theta L\Norm{\mX_{t+1}-\mathbf{1} \vw_{t+1}},
    \end{split}
\end{align}
where the first inequality is based on Proposition \ref{lem:Bounded Diagonals} and Assumption \ref{asm:smoothness}.

Similarly, we bound the term $D_4$ as 
\begin{align}\label{ineq:grad consensus error_t}
    \Norm{D_4}\leq \theta L\Norm{\mX_{t}-\mathbf{1} \vw_{t}}.
\end{align}

For the term $D_3$, it holds
\begin{align}\label{ineq:smothness}
\begin{split}
    \Norm{D_3}&=\sqrt{\sum_{i=1}^n \Norm{[\mD_{t+1}^{-1}]_{ii}(\nabla f_i(\vw_{t+1})-\nabla f_i(\vw_{t}))}^2} \\
    &\leq\sqrt{\theta^2 L^2\sum_{i=1}^n \Norm{\vw_{t+1}-\vw_{t}}^2}=\sqrt{n}\theta L\Norm{\vw_{t+1}-\vw_{t}}\\
    &\leq \sqrt{n}\theta L\eta\Norm{\vpi^{\top} \mU_{t}}\leq \sqrt{n}\theta L\eta,
    \end{split}
\end{align}
where first inequality is based on Assumption \ref{asm:smoothness} and the last inequality is based on equation (\ref{eq:piUt}).

Combing equations (\ref{ineq:grad error_t+1}), (\ref{ineq:grad error_t}), (\ref{ineq:grad consensus error_t+1}), (\ref{ineq:grad consensus error_t}), and (\ref{ineq:smothness}) with equation (\ref{eq:C1}), we obtain
\begin{align}\label{eq:C_1-final}
\begin{split}
        \BE[C_1] 
\leq & \frac{4\sqrt{2 n}\theta\sigma}{b^{1-1 / p}}
+ \theta L\left(\Norm{\mX_{t}-\mathbf{1} \vpi^{\top} \mX_{t}} + \Norm{\mX_{t+1}-\mathbf{1} \vpi^{\top} \mX_{t+1}}\right)
+ \sqrt{n} \theta L \eta \\
\leq & \frac{4\sqrt{2 n}\theta\sigma}{b^{1-1 / p}}
+ \theta L\left(\Norm{\mX_{t}-\mathbf{1} \vpi^{\top} \mX_{t}} + \rho(\Norm{\mX_{t}-\mathbf{1}\vpi^\top  \mX_{t}} + 2\sqrt{n} \eta)\right)
+ \sqrt{n} \theta L \eta \\
= & \frac{4\sqrt{2 n}\theta\sigma}{b^{1-1 / p}}
+ (1+\rho)\theta L\Norm{\mX_{t}-\mathbf{1} \vpi^{\top} \mX_{t}} +  (2 \rho +1)\theta L \eta \sqrt{n},
\end{split}
\end{align}
where the second inequality is based on (\ref{lem:consensus_x}).

For the term $C_2$, we have
\begin{align}\label{eq:C_2-final}
\begin{split}
    C_2 &\leq\Norm{\mD_{t+1}^{-1}-\mD_{t}^{-1}} \Norm{\mG_{t}}\\
    &\leq \Norm{\mD_{t+1}^{-1}-\mD_{t}^{-1}}(\Norm{\nabla \mF(\mX_t)}+\Norm{ \mG_{t}-\nabla \mF(\mX_t)}  )\\
    &\leq 2 \theta n^2 \kappa^{3/2}\beta^{(t+1)K} (\Norm{\nabla \mF(\mX_t)}+\Norm{ \mG_{t}-\nabla \mF(\mX_t)} )\\
    &\leq 2 \theta n^2 \kappa^{3/2}\beta^{(t+1)K} \Big(\Norm{\nabla \mF(\mX_t)} + \frac{2\sqrt{2n}\sigma}{b^{1-1 / p}}\Big)\\
    &\leq  2 \theta n^2 \kappa^{3/2}\beta^{(t+1)K}\Big(L\Norm{\mX_{t}-\mathbf{1}\vw_{t}}_{F}+\sqrt{n}\Norm{\nabla f(\vw_{t})}_{2} +  \frac{2 \sqrt{2 n}\sigma}{b^{1-1 / p}}\Big),
\end{split}
\end{align}
where the first two steps are based on triangle inequalities;
the third step is based on  Lemmas \ref{lem: iteration error for D} and \ref{lem:heavy-tailed};
and last inequality is based on the following derivation
\begin{align*}
\Norm{\nabla \mF(\mX_{t})} & =\Norm{\nabla \mF(\mX_{t})-\nabla \mF(\mathbf{1} \vw_{t})+\nabla \mF(\mathbf{1}\vw_{t})} \\
& \leq\Norm{\nabla \mF(\mX_{t})-\nabla \mF(\mathbf{1} \vw_{t})}+\Norm{\nabla \mF(\mathbf{1} \vw_{t})}\\
& =\sqrt{\sum_{i=1}^{n}\Norm{\nabla f_{i}(\vx_{t,i})-\nabla f_{i}(\vw_{t})}^{2}}+\sqrt{n}\Norm{\nabla f(\vw_{t})}\\
&\leq \sqrt{\sum_{i=1}^{n}(L\Norm{\vx_{t,i}-\vw_{t}})^{2}}+\sqrt{n}\Norm{\nabla f(\vw_{t})}\\
&=L\Norm{\mX_{t}-\mathbf{1} \vw_{t}}+\sqrt{n}\Norm{\nabla f(\vw_{t})} \\
&=L\Norm{\mX_{t}-\mathbf{1}\vpi^\top\mX_t}+\sqrt{n}\Norm{\nabla f(\vw_{t})},
\end{align*}
where the last inequality is based on Assumption \ref{asm:smoothness}.

Combing equations (\ref{eq:C_1-final}) and (\ref{eq:C_2-final}) with equations (\ref{eq:error-V}) and (\ref{eq:error-V1}), we finish the proof. 
\end{proof}

\subsection{The Proof of Theorem \ref{thm:main}}

\begin{proof}
   
For given $\mX_t$, $\mV_t$, and $\vw_t$,  the definition of Lyapunov function implies
\begin{align*}
&\BE [\Phi_{t+1}]\\
={}&\BE\left[f(\vw_{t+1}) +\frac{32\eta L}{\sqrt{n}} \Norm{\mX_{t+1}-\boldsymbol{1}\vpi^\top\mX_{t+1}}+ \frac{4\eta}{n} \Norm{\mV_{t+1}-\mathbf{1}\vpi^{\top} \mV_{t+1}}\right] \\
\leq{}& \BE\left[f(\vw_{t})-\eta \Norm{\nabla f(\vw_{t})}  
+ \frac{2\eta}{n} \Norm{n\nabla f(\vw_{t}) -\vpi^{\top} \mV_{t}} 
+ \frac{2\eta}{n}\Norm{\mV_{t}-\mathbf{1} \vpi^{\top} \mV_{t}} + \frac{\eta^{2} L}{2} \right. \\
& \left.\quad +  \frac{32\eta L}{\sqrt{n}} \Norm{\mX_{t+1}-\boldsymbol{1}\vpi^\top\mX_{t+1}}+\frac{4\eta}{n}\Norm{\mV_{t+1}-\mathbf{1}\vpi^{\top} \mV_{t+1}}\right]\\
\leq{}& f(\vw_t)-(\eta-2\eta\theta\sqrt{\kappa}\beta^{(t+1)K}-8\eta\rho\theta n^{3/2}\kappa^{3/2}\beta^{(t+1)K})\Norm{\nabla f(\vw_t)}\\
& +\left( \frac{2\eta L(1+\sqrt{\kappa}\theta \beta^{(t+1)K})}{\sqrt{n}}+\frac{32\eta L\rho}{\sqrt{n}}+\frac{4\eta\rho\theta L}{n}(1+\rho+2n^2\kappa^{3/2}\beta^{(t+1)K}) \right)\Norm{\mX_t-\boldsymbol{1}\vpi^\top \mX_t} \\
& +\left(\frac{2\eta}{n}+\frac{4\eta\rho}{n}\right)\Norm{\mV_t-\boldsymbol{1}\vpi^\top \mV_t}  + \frac{4\sqrt{2\kappa}\eta\theta\beta^{(t+1)K}\sigma}{b^{1-1/p}} \\
&+ \frac{\eta^2L}{2} + \frac{16\eta\sigma}{(nb)^{1-1/p}}  + \frac{16\sqrt{2}\eta\rho\theta\sigma(1+n^2\kappa^{3/2}\beta^{(t+1)K})}{\sqrt{n}b^{1-1/p}} + 64\eta^2L\rho+\frac{(8\rho+4)\rho\theta L\eta^2}{\sqrt{n}}\\
\leq{}& f(\vw_t)-\frac{\eta}{2}\Norm{\nabla f(\vw_t)} + \frac{32\eta L}{\sqrt{n}}\Norm{\mX_t-\boldsymbol{1}\vpi^\top \mX_t} + \frac{4\eta}{n} \Norm{\mV_t-\boldsymbol{1}\vpi^\top\mV_t}  + \frac{4\sqrt{2\kappa}\eta\theta\beta^{(t+1)K}\sigma}{b^{1-1/p}}\\
& + \frac{\eta^2L}{2} + \frac{16\eta\sigma}{(nb)^{1-1/p}}+\frac{256\eta\sigma}{\kappa nb^{1-1/p}}+64\eta^2L\rho+\frac{(8\rho+4)\rho\theta L\eta^2}{\sqrt{n}}\\
=& \Phi_t-\frac{\eta}{2}\Norm{\nabla f(\vw_t)} + \frac{4\sqrt{2\kappa}\eta\theta\beta^{(t+1)K}\sigma}{b^{1-1/p}} 
  + \frac{\eta^2L}{2} + \frac{16\eta\sigma}{(nb)^{1-1/p}}+\frac{256\eta\sigma}{\kappa nb^{1-1/p}}+64\eta^2L\rho+\frac{(8\rho+4)\rho\theta L\eta^2}{\sqrt{n}},
\end{align*}
where the first inequality is based on Lemma \ref{lem:descent-mean}; the second inequality is based on Lemmas \ref{lem:Estimate descent deviation}, \ref{lem:consensus_x}, and \ref{lem:consensus_v};
the third inequality based on the definition $\theta=2n\kappa$ and 
Lemma \ref{lem: matrixerror} with $K \geq 3(1+\ln (\kappa n))/(1-\beta)$ that lead to
\begin{align}\label{eq:upper-bound-rho}
\rho:=\Norm{\mA^{K}-\mathbf{1} \vpi^{\top}} \leq \sqrt{n\kappa}\beta^K\leq \frac{1}{n^2\kappa^2},
\end{align}
which implies the factors before the terms $\Norm{\nabla f(\vw_t)}$, $\Norm{\mX_t-\boldsymbol{1}\vpi^\top \mX_t}$ and $\Norm{\mV_t-\boldsymbol{1}\vpi^\top\mV_t}$ satisfy
\begin{align*}
\begin{cases}
\eta - 2\eta\theta  \sqrt{\kappa} \beta^{(t+1)K} - 8\eta\rho\theta n^{3/2}\kappa^{3/2}\beta^{(t+1)K} \geq \dfrac{\eta}{2}, \\[0.25cm]
\dfrac{2\eta L(1+\sqrt{\kappa}\theta \beta^{(t+1)K})}{\sqrt{n}}+\dfrac{32\eta L\rho}{\sqrt{n}}+\dfrac{4\eta\rho\theta L(1+\rho+2n^2\kappa^{3/2}\beta^{(t+1)K})}{n} \leq \dfrac{32\eta L}{\sqrt{n}}, \\[0.25cm]
\dfrac{2\eta}{n}+\dfrac{4\eta \rho}{n} \leq \dfrac{4\eta}{n}. 
\end{cases}
\end{align*}

Taking the average on above result with $t=0,1,\dots,T-1$, we have
\begin{align}\label{eq:avg-grad}
\begin{split}
    & \BE\left[\frac{1}{T}\sum_{t=0}^{T-1}\Norm{\nabla f(\vw_t)}\right] \\
\leq & \frac{2\BE[\Phi_0-\Phi_T]}{T\eta}  
+ \frac{16\sqrt{2}\sigma}{e^3n^2\kappa b^{1-1/p}} 
+ \eta L + \frac{32\sigma}{(nb)^{1-1/p}}+\frac{512\sigma}{\kappa nb^{1-1/p}} + 128\eta L\rho+\frac{(16\rho+8)\rho\theta L\eta}{\sqrt{n}} \\
\leq & \frac{2\BE[\Phi_0-\Phi_T]}{T\eta}  
+  \frac{546\sigma}{(nb)^{1-1/p}}+ 153\eta L,
\end{split}
\end{align}
where the second inequality is based on the facts $\kappa\geq 1$, $\theta=2n\kappa$, and $\rho\leq 1/(n^2\kappa^2)$ from equation (\ref{eq:upper-bound-rho}).

Now we upper bound the term $\BE[\Phi_0-\Phi_T]$ by the initial function value gap 
\begin{align*}
    \Delta = f(\bar\vx_0) - \inf _{\vx \in \BR^d} f(\vx).
\end{align*}
Specifically, we have
\begin{align}\label{eq:initial-phi}
\begin{split}
\BE[\Phi_0-\Phi_T]=&\BE\left[ f(\vw_{0}) + 
\frac{4\eta }{n}\Norm{\mV_{0}-\mathbf{1}\vpi^{\top} \mV_{0}} +  \frac{32\eta L}{\sqrt{n}} \Norm{\mX_{0}-\mathbf{1}\vpi^\top  \mX_{0}} \right.\\
&-\left.\left(f(\vw_{T}) + \frac{4\eta}{n} \Norm{\mV_{T}-\mathbf{1}\vpi^{\top} \mV_{T}} +  \frac{32\eta L}{\sqrt{n}} \Norm{\mX_{T}-\mathbf{1}\vpi^\top  \mX_{T}}\right)\right]\\
&\leq f(\bar\vx_0) - \inf _{\vx \in \BR^d} f(\vx)+\frac{4\eta}{n}\Norm{\mV_{0}-\mathbf{1}\pi^{\top} \mV_{0}} \\
&= \Delta +\frac{4\eta}{n}\Norm{\mV_{0}-\mathbf{1}\pi^{\top} \mV_{0}}.
\end{split}
\end{align}
For the last term in above equation, we have
\begin{align}\label{eq:initial-V}
\begin{split}
        &\BE\left[\Norm{\mV_{0}-\mathbf{1}\vpi^{\top} \mV_{0}}\right]=\BE\left[\Norm{\mA^{\hat{K}}\mD_0^{-1}\mG_0-\mathbf{1} \vpi^\top \mD_0^{-1}\mG_0}\right] \\
    \leq &\Norm{\mA^{\hat{K}}-\mathbf{1}\vpi^\top}\Norm{\mD_0^{-1}}\BE\left[\Norm{\mG_0}\right]\\
    \leq & 2n\kappa\sqrt{n\kappa}\beta^{\hat{K}} \BE\left[\Norm{\mG_0}\right] \\
    \leq & 2n\kappa\sqrt{n\kappa}\beta^{\hat{K}}(\Norm{\nabla f(\mX_0)} + \BE\left[\Norm{\mG_0-\nabla f(\mX_0)})\right] \\
    \leq & 2n\kappa\sqrt{n\kappa}\beta^{\hat{K}} \left(\sqrt{\sum_{i=0}^n\Norm{\nabla f_i(\bar\vx_0)}^2} + \frac{2\sqrt{2n}\sigma}{b^{1-1/p}}\right) \\
    \leq & \frac{n\Delta}{4\eta},
\end{split}
\end{align}
where the  second inequality is based on Proposition \ref{lem:Bounded Diagonals} and Lemma \ref{lem: matrixerror}, the third inequality is based on the triangle inequality, the fourth inequality is based on Lemma \ref{lem:heavy-tailed} and the last step is based on the setting 
 \begin{align*}
     \hat{K} \geq \frac{1}{1-\beta}\ln \left(1+\frac{8\kappa\eta \sqrt{n\kappa} \left(\sqrt{\sum_{i=1}^n\Norm{\nabla f_i(\bar\vx_{0})}^2}+2\sqrt{2n}\sigma/{b^{1-1/p}}\right)}{\Delta}\right) = \tilde{{\Theta}} \left(\frac{1}{1-\beta}\right).
 \end{align*}

 Combining results of equations (\ref{eq:avg-grad}),  (\ref{eq:initial-phi}), and (\ref{eq:initial-V}) and taking 
\begin{align*}
    T= \left\lceil\frac{3672L\Delta}{\epsilon^2}\right\rceil, \quad \eta=\frac{\epsilon}{918L}, \quad \text{and}  \quad b= \left\lceil \frac{1}{n} \left(\frac{3276\sigma}{\epsilon}\right)^{\frac{p}{p-1}}\right\rceil,
\end{align*}
we obtain
\begin{align*}
    \BE\left[\frac{1}{T}\sum_{t=0}^{T-1}\Norm{\nabla f(\vw_t)}\right]
\leq \frac{\epsilon}{6} + \frac{\epsilon}{6} + \frac{\epsilon}{6} \leq \frac{\epsilon}{2}.
\end{align*}

Recall that each agent sample $\hat\vx_i\sim{\rm Unif}\{\vx_{0,i},\dots,\vx_{T-1,i}\}$. 
Hence, for all $i\in[n]$, we have
\begin{align*}
     \BE [\Norm{\nabla f(\hat{\vx}_i)}] &= \BE\left[ \frac{1}{T}\sum_{t=0}^{T-1}\Norm{\nabla f(\vx_{t,i})}\right]\\
     &\leq \BE\left[ \frac{1}{T}\sum_{t=0}^{T-1}\Norm{\nabla f(\vx_{t,i})-\nabla f(\vw_t)}+\frac{1}{T}\sum_{t=0}^{T-1}\Norm{\nabla f(\vw_t)}\right]\\
     &\leq \BE \left[ \frac{1}{T}\sum_{t=0}^{T-1}L\Norm{\vx_{t,i}-\vw_t} +
      \frac{1}{T}\sum_{t=0}^{T-1}\Norm{\nabla f(\vw_t)}\right]\\
     &\leq \BE \left[ \frac{1}{T}\sum_{t=0}^{T-1}L\Norm{\mX_t-\boldsymbol{1}\vw_t}\right] + \frac{\epsilon}{2}\\
     &\leq \frac{L}{T}\cdot\frac{2n\eta\rho T}{1-\rho}+\frac{\epsilon}{2}\\
     &=\frac{2n\eta\rho L}{1-\rho}+ \frac{\epsilon}{2},
 \end{align*}
where the first inequality is based on the triangle inequality; the second inequality is based on Assumption \ref{asm:smoothness}; 
the third inequality is based on Theorem \ref{thm:main};
the last inequality applies Lemma \ref{lem:consensus_x} that implies
\begin{align*}
\begin{split}
    &\Norm{\mX_{t+1} - \boldsymbol{1}\vpi^\top \mX_{t}}\\
    \leq &\rho(\Norm{\mX_{t} - \boldsymbol{1}\vpi^\top \mX_{t}} +2n\eta)\\
    \leq & \rho(\rho(\Norm{\mX_{t-1} - \boldsymbol{1}\vpi^\top \mX_{t-1}} +2n\eta)+2n\eta) \\
    =&\rho^2 \Norm{\mX_{t-1} - \boldsymbol{1}\vpi^\top \mX_{t-1}}+2n\eta(\rho+\rho^2) \\
    \leq&\rho^{t+1} \Norm{\mX_0-\boldsymbol{1}\vpi^\top\mX_0}+2n\eta \sum_{i=1}^{t+1} \rho^i \\
    =&2n\eta\sum_{i=1}^{t+1} \rho^i \\
    \leq & \frac{2n\eta\rho}{1-\rho}.
\end{split}
\end{align*}

In order to achieve $\BE [\Norm{\nabla f(\hat{\vx_i})}]  \leq \epsilon$ for all $i\in[n]$, we require $2n\eta\rho L/(1-\rho) \leq \epsilon/2$, which can be achieved by taking 
\begin{align*}
     K  \geq \max\left\{\frac{3(1+\ln (\kappa n))}{1-\beta},
    \frac{\ln {\sqrt{n\kappa}(\epsilon+4n\eta L)}/{\epsilon}}{1-\beta}\right\}
\end{align*}
that leads to 
\begin{align*}
    \rho \leq \frac{\epsilon}{\epsilon+4n\eta L}.
\end{align*}
Additionally, the parameter settings requires the overall sample complexity of
\begin{align*}
Tnb=\mathcal{O}\left(\frac{\Delta L \sigma^{\frac{p}{p-1}}}{\epsilon^{\frac{3p-2}{p-1}}}\right)
\end{align*}
and the communication complexity of
\begin{align*}
\hat K + TK 
=\mathcal{O}\left(\frac{\Delta L}{(1-\beta)\epsilon^{2}}\right).
\end{align*}
\end{proof}

\section{The Proof of Results in Section \ref{sec:undirected}}\label{appendix:undirected}

For the undirected network, we adapt Algorithm \ref{alg:PDNSGD} by modifying the update rules of the variables $\mX_{t+1}$ and $\mV_{t+1}$ to
\begin{align*} 
\begin{cases}
\mX_{t+1} = \AG\left(\mX_{t} - \eta\mU_t, \mA, K\right), \\
\mV_{t+1} = \AG\left(\mV_{t} + \mG_{t+1} - \mG_{t}, \mA, K\right).    
\end{cases}
\end{align*}
Similarly, we also apply Chebyshev acceleration to set 
\begin{align*} 
\mV_0 = \AG\big(\mG_0, \mA, \hat K\big),
\end{align*}
which leads to the algorithm reduces to normalized stochastic gradient descent with gradient tracking (DNSGD) \citep{luo2025decentralized}.
We present the complete procedure DNSGD in Algorithm \ref{alg:DNSGD}.
Different with \citet{luo2025decentralized} analyzing the relaxed smoothness under bounded variance and bounded gradient dissimilarity, this section focus on addressing the heavy-tailed noise by DNSGD.

\begin{algorithm*}[h]
\caption{Normalized Stochastic Gradient Descent with  Gradient Tracking (DNSGD)}
\label{alg:DNSGD}
\begin{algorithmic}[1]
\STATE \textbf{Input:} $\bx_0\in\BR^{1\times d}$, $\mA\in\BR^{n\times n}$, $\eta>0$, $\hat K,K,T,b\in\BN$. \\[0.12cm]
        \STATE $\mX_0=\vone\bx_0$;\quad independently sample $\vxi_{0,i,k}\sim\fD_i$ for all $k\in[b]$ at each agent $i$;  \\[0.12cm]
        \STATE \label{line:G02} $\mG_0 = \begin{bmatrix}
              {\displaystyle\frac{1}{b}\sum_{k=1}^{b} \nabla F_1(\bx_0;\vxi_{0,1,k})} & \dots & 
              {\displaystyle\frac{1}{b}\sum_{k=1}^{b} \nabla F_n(\bx_0;\vxi_{0,n,k})}
            \end{bmatrix}^\top$;\quad $\mV_0 = \AG\big(\mG_0, \mA, \hat K\big)$; \\[0.12cm]
        \STATE \textbf{for} $t = 0, 1, \dots, {T-1}$ \textbf{do} \\[0.12cm]   		     
        \STATE\quad\label{line:U2}$\mU_t = \begin{bmatrix}
            \dfrac{\vv_{t,1}}{\Norm{\vv_{t,1}}} & \dots & \dfrac{\vv_{t,n}}{\Norm{\vv_{t,n}}}
        \end{bmatrix}^\top$; \quad $\mX_{t+1} = \AG\left(\mX_{t} - \eta\mU_t, \mA, K\right)$ ; \label{line:update-X_undirected2} \\[0.12cm]
        \STATE\quad independently sample $\vxi_{t+1,i,k}\sim\fD_i$ for all $k\in[b]$ at each agent $i$; \\[0.12cm]
        \STATE\quad \label{line:Gt2} $\displaystyle{\mG_{t+1} =\begin{bmatrix}
        {\displaystyle\frac{1}{b} \sum_{k=1}^{b} \nabla F_{1}(\vx_{t+1,1}; \vxi_{t+1,1,k})} 
        & \cdots &
        {\displaystyle\frac{1}{b} \sum_{k=1}^{b} \nabla F_{n}(\vx_{t+1,n}; \vxi_{t+1,n,k})}\end{bmatrix}^\top}$; \\[0.15cm]
        \STATE\quad \label{line:Vt2} $\mV_{t+1} = \AG\left(\mV_{t} + \mG_{t+1} - \mG_{t}, \mA, K\right)$; \\[0.2cm]       
        \STATE\textbf{end for} \\[0.12cm]
        \STATE $\hat\vx_i\sim{\rm Unif}\{\vx_{0,i},\dots,\vx_{T-1,i}\}$ for each agent $i$. \\[0.12cm] 
		\STATE \textbf{Output:} $\hat\vx_i$ for each agent $i$.
\end{algorithmic}
\end{algorithm*}

\subsection{The proof of Lemma \ref{lem:descent-undirected}}
\begin{proof}
Recall that $\bar{\vx}_{t} = \frac{1}{n}\boldsymbol{1}^{\top} \mX_{t}$, then the update $\mX_{t+1} = \AG\left(\mX_{t} - \eta\mU_t, \mA, K\right)$ and Proposition \ref{prop:Chebyshev acceleration} imply
\begin{align*}
\bar{\vx}_{t+1}=\bar{\vx_{t}}-\frac{\eta}{n}\boldsymbol{1}^{\top}  \mU_{t}.     
\end{align*}
According to above equation and Assumption \ref{asm:smoothness}, we have
\begin{align}\label{neq:smoothness_undirected}  
    f(\bar{\vx}_{t+1}) \leq f(\bar{\vx}_t)- \frac{\eta}{n}\left\langle \nabla f(\bar{\vx}_{t}), \boldsymbol{1}^{\top} \mU_{t}\right\rangle+\frac{\eta^{2} L}{2n^2}\Norm{\boldsymbol{1}^{\top} \mU_{t}}^{2}.
\end{align}
We then focus on the inner product term on the right-hand side of (\ref{neq:smoothness}) as follows
\begin{align}\label{neq:inner_undirected} 
\begin{split}    
&- \frac{\eta}{n}\left\langle \nabla f(\bar{\vx}_{t}), \boldsymbol{1}^{\top} \mU_{t}\right\rangle \\
&=-\frac{\eta}{n}\left\langle \nabla f\left(\bar{\vx}_{t}\right), \sum_{i=1}^n \frac{\vv_{t,i}}{\Norm{\vv_{t,i}}}\right\rangle \\
&=-\frac{\eta}{n} \left(\sum_{i=1}^n \frac{\nabla f(\bar{\vx}_{t})^\top\vv_{t,i}}{\Norm{\vv_{t,i}}}\right)\\
&\leq  -\frac{\eta}{n}\left(\sum_{i=1}^n (\Norm{\nabla f(\bar{\vx}_{t})}-2\Norm{\nabla f(\bar{\vx}_{t})-\vv_{t,i}})\right)\\
&=-\frac{\eta}{n} \left( n\Norm{\nabla f(\bar{\vx}_{t})}-2 \sum_{i=1} ^n\Norm{\nabla f(\bar{\vx}_{t})-\vv_{t,i}} \right)\\
&\leq -\eta \Norm{\nabla f(\bar{\vx}_{t})}  + \frac{2\eta}{n} \left( \sum_{i=1}^n \Norm{\nabla f(\bar{\vx}_{t})-\frac{1}{n}\boldsymbol{1}^{\top} \mV_{t}}+\sum_{i=1}^n \Norm{\frac{1}{n}\boldsymbol{1}^{\top} \mV_{t}-\vv_{t,i}} \right)\\
& \leq -\eta \Norm{\nabla f(\bar{\vx}_{t})}  + 2\eta\Norm{\nabla f(\bar{\vx}_{t})-\frac{1}{n}\boldsymbol{1}^{\top} \mV_{t}}+ \frac{2\eta}{n}\sqrt{n}\Norm{\mV_{t}-\frac{1}{n}\boldsymbol{1} \boldsymbol{1}^{\top} \mV_{t}} \\
&= -\eta \Norm{\nabla f(\bar{\vx}_{t})}  + 2\eta\Norm{\nabla f(\bar{\vx}_{t})-\frac{1}{n}\boldsymbol{1}^{\top} \mV_{t}}+ \frac{2\eta}{\sqrt{n}}\Norm{\mV_{t}-\frac{1}{n}\boldsymbol{1} \boldsymbol{1}^{\top} \mV_{t}} 
\end{split}
\end{align}
where the first inequality is based on Lemma \ref{lem: baseineq}; 
the second inequality is based on triangle inequality; 
the last step uses Cauchy--Schwarz inequality which leads to 
\begin{align*}
    \sum_{i=1}^n \Norm{\frac{1}{n}\boldsymbol{1}^{\top} \mV_{t}-\vv_{t,i}}\leq \sqrt{\sum_{i=1}^n1^2}\sqrt{\sum_{i=1}^n \Norm{\frac{1}{n}\boldsymbol{1}^\top\mV_t-\vv_{t,i}}^2} 
    \leq \sqrt{n}\Norm{\frac{1}{n}\boldsymbol{1} \boldsymbol{1}^{\top}  \mV_{t}-\mV_{t}}.
\end{align*}
Combing equations (\ref{neq:smoothness_undirected}) and (\ref{neq:inner_undirected}), we obtain 
\begin{align*}
\begin{split}    
     f(\bar{\vx}_{t+1})
    & \leq f(\bar{\vx}_{t})-\eta \Norm{\nabla f(\bar{\vx}_{t})}  + 2\eta  \Norm{\nabla f(\bar{\vx}_{t})-\frac{1}{n}\boldsymbol{1}^{\top} \mV_{t}}+ \frac{2\eta}{\sqrt{n}}\Norm{\mV_{t}-\frac{1}{n}\boldsymbol{1}\boldsymbol{1}^{\top} \mV_{t}}   +\frac{\eta^{2} L}{2n^2}\Norm{\boldsymbol{1}^{\top} \mU_{t}}^{2}\\
    &\leq f(\bar{\vx}_{t})-\eta \Norm{\nabla f(\bar{\vx}_{t})}  + 2\eta  \Norm{\nabla f(\bar{\vx}_{t})-\frac{1}{n}\boldsymbol{1}^{\top} \mV_{t}}+ \frac{2\eta}{\sqrt{n}}\Norm{\mV_{t}-\frac{1}{n}\boldsymbol{1}\boldsymbol{1}^{\top} \mV_{t}} +\frac{\eta^{2} L}{2},
\end{split}
\end{align*}
where the last step is based on facts that the norm of each row of $\mU_t$ is one .
Taking expectations on both sides of the above inequality, then we finish the proof.

\end{proof}

\subsection{The proof of Lemma \ref{lem:Estimate descent deviation undirected}}
\begin{proof}
Similar to the proof of Lemma \ref{lem:Estimate descent deviation}
, we have 
\begin{align}\label{eq:piVDG_undirected}
 \frac{1}{n}\boldsymbol{1}^{\top}\mV_t= \frac{1}{n}\boldsymbol{1}^\top\mG_{t}.
\end{align}

We now consider the statement of this lemma. According equation (\ref{eq:piVDG_undirected}) and the triangle inequality, we split the upper bound of $\Norm{\nabla f(\bar{\vx}_{t})-\frac{1}{n}\boldsymbol{1}^{\top} \mV_t}$ as follows
\begin{align}\label{eq:diff-grad-V_undirected}
\begin{split}
 \Norm{\nabla f(\bar{\vx}_{t})-\frac{1}{n}\boldsymbol{1}^{\top} \mV_t} = \Norm{\nabla f(\bar{\vx}_{t})-\frac{1}{n}\boldsymbol{1}^{\top} \mG_t}  
\leq  \underbrace{\Norm{\nabla f(\bar{\vx}_{t})-\frac{1}{n}\mathbf{1}^{\top} \nabla \mF(\mX_{t})}}_{A_{1}} 
+\underbrace{\Norm{\frac{1}{n}\boldsymbol{1^\top}(\nabla \mF(\mX_{t})-\mG_{t})}}_{A_2} 
\end{split}
\end{align}
For the term $A_1$, it holds
\begin{align}\label{eq:A_1_undirected}
\begin{split}
     A_{1}&=\Norm{\nabla f(\bar{\vx}_{t})-\frac{1}{n}\mathbf{1}^{\top} \nabla \mF(\mX_{t})}= \frac{1}{n}\Norm{\sum_{i=1}^{n}(\nabla f_{i}(\bar{\vx}_{t})-\nabla f_{i}(\vx_{t,i}))}\\
    &\leq \frac{L}{n}\sum_{i=1}^{n}\Norm{\bar{\vx}_{t}-\vx_{t,i}}
     \leq \frac{L}{n}\sqrt{n}\sqrt{\sum_{i=1}^{n}\Norm{\bar{\vx}_{t}-\vx_{t,i}}^{2}} =\frac{L}{\sqrt{n}}\Norm{\mathbf{1} \bar{\vx}_{t}-\mX_{t}} = \frac{L}{\sqrt{n}}\Norm{\mX_{t}-\frac{1}{n}\mathbf{1}\boldsymbol{1}^\top \mX_{t}},
\end{split}
\end{align}
where the fist inequality is based on the Assumption \ref{asm:smoothness} and the last inequality is based on the Cauchy–Schwarz inequality.

We then bound the term $A_2$ as
\begin{align}\label{eq:A_2_undirected}
\begin{split}
    \BE[A_2] &= \frac{1}{n}\BE\left[\Norm{\sum_{i=1}^n \left( \nabla f_i(\vx_{t,i}) - \vg_{t,i}\right)}\right]  \\
    &=\frac{1}{n}\BE\left[\Norm{\sum_{i=1}^n \left(\nabla f_i(\vx_{t,i}) - \frac{1}{b}\sum_{k=1}^b\nabla F_i(\vx_{t,i};\xi_{t,i,k})\right)}\right]  \\
    &\leq \frac{2\sqrt{2}}{n}\BE \left[\left(\sum_{i=1}^n\Norm{\nabla f_i(\vx_{t,i}) - \frac{1}{b}\sum_{k=1}^b\nabla F_i(\vx_{t,i};\xi_{t,i,k})}^p\right)^\frac{1}{p}\right]\\
   &\leq \frac{2\sqrt{2}}{n} \left(\BE\left[\sum_{i=1}^n\Norm{\nabla f_i(\vx_{t,i}) - \frac{1}{b}\sum_{k=1}^b\nabla F_i(\vx_{t,i};\xi_{t,i,k})}^p\right]\right)^\frac{1}{p} \\
   &\leq \frac{2\sqrt{2}}{n}\left(n \left(\frac{2\sqrt{2}\sigma}{b^{1-1/p}}\right)^p\right)^{\frac{1}{p}} = \frac{8\sigma}{(nb)^{1-1/p}}
\end{split}
\end{align}
where the first inequality is based on Lemma \ref{lem:heavy-tailed_cite};
the second inequality is based on the Jensen’s inequality;
and the last inequality is based on Lemma \ref{lem:heavy-tailed}.

Combining equations (\ref{eq:A_1_undirected}) and (\ref{eq:A_2_undirected}) with equation (\ref{eq:diff-grad-V_undirected}), we  completes the proof.
\end{proof}

\subsection{The proof of Lemma \ref{lem:consensus_x_undirected}}
\begin{proof}
According to the update $\mX_{t+1}=\AG(\mX_{t}-\eta\mU_t, \mA, K)$ in Algorithm \ref{alg:DNSGD}, we have
\begin{align*}
    &\Norm{\mX_{t+1}-\frac{1}{n}\mathbf{1} \mathbf{1} ^{\top} \mX_{t+1}}\\
    \leq & \tilde\rho \Norm{\mX_t-\eta\mU_t-\frac{1}{n}\boldsymbol{1}\boldsymbol{1}^\top(\mX_{t}-\eta\mU_t)}\\
    =&\tilde\rho \Norm{\mX_t-\eta\mU_t-\frac{1}{n}\boldsymbol{1}\boldsymbol{1}^\top\mX_t+\frac{\eta}{n}\boldsymbol{1}\boldsymbol{1}^\top\mU_t}\\
    \leq & \tilde\rho \Norm{\mX_t-\frac{1}{n}\boldsymbol{1}\boldsymbol{1}^\top\mX_t}+\rho \eta\Norm{\mU_t-\frac{1}{n}\boldsymbol{1}\boldsymbol{1}^\top\mU_t}\\
    \leq& \tilde\rho\Norm{\mX_t-\frac{1}{n}\mathbf{1} \mathbf{1} ^{\top}\mX_t} + \rho\eta\Norm{\mU_t}\\
    =&\tilde\rho\Norm{\mX_t-\frac{1}{n}\mathbf{1} \mathbf{1} ^{\top}\mX_t}+\tilde\rho\eta\sqrt{n}
\end{align*}
where the first inequality is based on Proposition \ref{prop:Chebyshev acceleration} with $\tilde\rho = c_1(1-c_2\sqrt{1-\beta})^K$, the second inequality is based on triangle inequality and the last inequality is based on Lemma \ref{lem:undirected_norm_ineq}.

\end{proof}

\subsection{The proof of Lemma \ref{lem:consensus_v_undirected}}
\begin{proof}
According to the update $\mV_{t+1}=\AG(\mV_{t}+ \mG_{t+1}- \mG_{t}, \mA, K)$ in Algorithm \ref{alg:DNSGD}, we have
\begin{align}\label{eq:error-V_undirected}
\begin{split}
    &\Norm{\mV_{t+1}-\frac{1}{n}\mathbf{1} \mathbf{1} ^{\top} \mV_{t+1}}\\
    \leq & \tilde\rho \Norm{\mV_t+
    \mG_{t+1}-\mG_t-\frac{1}{n}\boldsymbol{1}\boldsymbol{1}^\top(\mV_t+\mG_{t+1}-\mG_t)}\\
    =&\tilde\rho \Norm{\mV_t-\frac{1}{n}\boldsymbol{1}\boldsymbol{1}^\top\mV_t+(\mI-\frac{1}{n}\boldsymbol{1}\boldsymbol{1}^\top)(\mG_{t+1}-\mG_t)}\\
    \leq & \tilde\rho \Norm{\mV_t-\frac{1}{n}\boldsymbol{1}\boldsymbol{1}^\top\mV_t}+\tilde\rho \Norm{\mG_{t+1}-\mG_t},
    \end{split}
\end{align}
where the first inequality is based on Proposition \ref{prop:Chebyshev acceleration} with $\tilde\rho = c_1(1-c_2\sqrt{1-\beta})^K$, the second inequality is based on triangle inequality and the last inequality is based on Lemma \ref{lem:undirected_norm_ineq}.

We split the  term $\Norm{ \mG_{t+1}- \mG_{t}}$ in the last line of equation (\ref{eq:error-V_undirected}) as
\begin{align}\label{eq:C1_undirected}
\begin{split}
    \Norm{ \mG_{t+1}- \mG_{t}}=&\underbrace{\mG_{t+1}-\nabla \mF(\mX_{t+1})}_{D_1} 
+ \underbrace{\nabla \mF(\mX_{t+1}) - \nabla \mF(\mathbf{1}\vw_{t+1})}_{D_2}  
+ \underbrace{\nabla \mF(\mathbf{1}\vw_{t+1}) -\nabla \mF(\mathbf{1} \vw_{t})}_{D_3}\\
& + \underbrace{\nabla \mF(\mathbf{1} \vw_{t})-\nabla \mF(\mX_{t})}_{D_4} 
+ \underbrace{\nabla \mF(\mX_{t})-\mG_{t}}_{D_5}.
\end{split}
\end{align} 
For the term $D_1$, it holds
\begin{align}\label{ineq:grad error_t+1_undirected}
\begin{split}
    \BE[\Norm{D_1}] &=  \BE\left[\sqrt{\sum_{i=1}^n\Norm{[\vg_{t+1,i}-\nabla f_{i}(\vx_{t+1,i})}^2}\,\right] \\
    &\leq \BE\left[\sqrt{n\max_{i\in[n]} \Norm{\vg_{t+1,i}-\nabla f_{i}(\vx_{t+1,i})}^2}\,\right]\\
     &= \sqrt{n}\max_{i\in[n]}\BE\left[ \Norm{\vg_{t+1,i}-\nabla f_{i}(\vx_{t+1,i})}^2\right]\\
    &\leq \frac{2 \sqrt{2n} \sigma}{b^{1-1 / p}},
    \end{split}
\end{align}
where the last step is based on  Lemma \ref{lem:heavy-tailed}.

Similarly, we bound the term $D_5$ as  
\begin{align}\label{ineq:grad error_t_undirected}
    \BE[\Norm{D_5}] \leq \frac{2 \sqrt{2n} \sigma}{b^{1-1 / p}}.
\end{align}

For the term $D_2$, it holds 
\begin{align}\label{ineq:grad consensus error_t+1_undirected}
\begin{split}
    \Norm{D_2}=&\sqrt{\sum_{i=1}^n \Norm{\nabla f_{i}(\vx_{t+1,i})-\nabla f(\bar{\vx}_{t+1})}^2} \\
    \leq & L\sqrt{\sum_{i=1}^n \Norm{\vx_{t+1,i}-\bar{\vx}_{t+1}}^2}
    =L\Norm{\mX_{t+1}-\mathbf{1} \bar{\vx}_{t+1}},
    \end{split}
\end{align}
where the first inequality is based on  Assumption \ref{asm:smoothness}.

Similarly, we bound the term $D_4$ as 
\begin{align}\label{ineq:grad consensus error_t_undirected}
    \Norm{D_4}\leq L\Norm{\mX_{t}-\mathbf{1} \bar{\vx}_{t}}.
\end{align}

For the term $D_3$, it holds
\begin{align}\label{ineq:smothness_undirected}
\begin{split}
    \Norm{D_3}&=\sqrt{\sum_{i=1}^n \Norm{\nabla f_i(\bar{\vx}_{t+1})-\nabla f_i(\bar{\vx}_{t})}^2} \\
    &\leq\sqrt{L^2\sum_{i=1}^n \Norm{\bar{\vx}_{t+1}-\bar{\vx}_{t}}^2}=\sqrt{n} L\Norm{\bar{\vx}_{t+1}-\bar{\vx}_{t}}\\
    &\leq \sqrt{n} L\eta\Norm{\frac{1}{n}\boldsymbol{1}^{\top} \mU_{t}}=\sqrt{n} L\eta,
    \end{split}
\end{align}
where first inequality is based on Assumption \ref{asm:smoothness} .

Combing equations (\ref{ineq:grad error_t+1_undirected}), (\ref{ineq:grad error_t_undirected}), (\ref{ineq:grad consensus error_t+1_undirected}), (\ref{ineq:grad consensus error_t_undirected}), and (\ref{ineq:smothness_undirected}) with equation (\ref{eq:C1_undirected}), we obtain
\begin{align}\label{eq:C_1-final_undirected}
\begin{split}
        \BE[\Norm{ \mG_{t+1}- \mG_{t}}] 
\leq & \frac{4\sqrt{2 n}\sigma}{b^{1-1 / p}}
+ L\left(\Norm{\mX_{t}-\frac{1}{n}\mathbf{1} \mathbf{1}^{\top} \mX_{t}} + \Norm{\mX_{t+1}-\frac{1}{n}\mathbf{1} \mathbf{1}^{\top}  \mX_{t+1}}\right)
+ \sqrt{n} L \eta \\
\leq & \frac{4\sqrt{2 n}\sigma}{b^{1-1 / p}}
+ L\left(\Norm{\mX_{t}-\frac{1}{n}\mathbf{1} \mathbf{1}^{\top} \mX_{t}} + \tilde\rho(\Norm{\mX_{t}-\frac{1}{n}\mathbf{1}\mathbf{1}^\top  \mX_{t}} + 2\sqrt{n} \eta)\right)
+ \sqrt{n} L \eta \\
= & \frac{4\sqrt{2 n}\sigma}{b^{1-1 / p}}
+ (1+\tilde\rho) L\Norm{\mX_{t}-\frac{1}{n}\mathbf{1} \mathbf{1}^{\top} \mX_{t}} +  (2 \tilde\rho +1) L \eta \sqrt{n},
\end{split}
\end{align}
where the second inequality is based on Lemma (\ref{lem:consensus_x}) with $\vpi=\frac{1}{n}\boldsymbol{1}$.

Combing equations (\ref{eq:C_1-final_undirected}) with equations (\ref{eq:error-V_undirected}), we finish the proof. 
\end{proof}

\subsection{The proof of Theorem \ref{thm:main2}}
\begin{proof}
For the undirected network, we define the Lyapunov function
\begin{align*}
    \Psi_t := &f(\bar{\vx}_{t})  +  \frac{32\eta L}{\sqrt{n}}\Norm{\mX_{t}-\frac{1}{n}\mathbf{1}\mathbf{1}^\top  \mX_{t}} + \frac{4\eta}{\sqrt{n}} \Norm{\mV_{t}-\frac{1}{n}\mathbf{1}\mathbf{1}^{\top} \mV_{t}}.
\end{align*}
Then it holds
\begin{align*}
&\BE [\Psi_{t+1}]\\
=&\BE\left[f(\bar{\vx}_{t+1}) +\frac{32\eta L}{\sqrt{n}} \Norm{\mX_{t+1}-\frac{1}{n}\boldsymbol{1}\boldsymbol{1}^\top\mX_{t+1}}+ \frac{4\eta}{\sqrt{n}} \Norm{\mV_{t+1}-\frac{1}{n}\boldsymbol{1}\boldsymbol{1}^\top \mV_{t+1}}\right] \\
\leq & f(\bar{\vx}_{t})-\eta \Norm{\nabla f(\bar{\vx}_{t})} + \frac{\eta^{2} L}{2}+ 2\eta  \Norm{\nabla f(\bar{\vx}_{t}) -\frac{1}{n}\boldsymbol{1}^\top \mV_{t}}+ \frac{2\eta}{\sqrt{n}}\Norm{\mV_{t}-\frac{1}{n}\boldsymbol{1}\boldsymbol{1}^\top  \mV_{t}}\\
&+\left[ \frac{32\eta L}{\sqrt{n}} \Norm{\mX_{t+1}-\frac{1}{n}\boldsymbol{1}\boldsymbol{1}^\top\mX_{t+1}}+ \frac{4\eta}{\sqrt{n}} \Norm{\mV_{t+1}-\frac{1}{n}\boldsymbol{1}\boldsymbol{1}^\top \mV_{t+1}} \right] \\
\leq & f(\bar{\vx}_{t})-\eta \Norm{\nabla f(\bar{\vx}_{t})} + \frac{\eta^{2} L}{2}+2\eta \left(\frac{L}{\sqrt{n}}\Norm{\mX_t-\frac{1}{n}\boldsymbol{1}^\top\mX_t}+\frac{8\sigma}{(nb)^{1-1/p}}\right)\\
&+\frac{2\eta}{\sqrt{n}}\Norm{\mV_{t}-\frac{1}{n}\boldsymbol{1}\boldsymbol{1}^\top  \mV_{t}}+\frac{32\eta L}{\sqrt{n}}\tilde\rho\left(\Norm{\mX_t-\frac{1}{n}\boldsymbol{1}\boldsymbol{1}^\top\mX_t} +\sqrt{n}\eta \right)\\
&+\frac{4\eta}{\sqrt{n}}  \tilde\rho\Big(\Norm{\mV_{t}-\frac{1}{n}\boldsymbol{1}\boldsymbol{1}^\top\mV_{t}}+L(1+\tilde\rho)\Norm{\mX_{t}-\frac{1}{n}\boldsymbol{1}\boldsymbol{1}^\top \mX_{t}}+  (2 \tilde\rho +1) L \eta \sqrt{n} + \frac{4 \sqrt{2 n}\sigma}{b^{1-1 / p}}\Big)\\
= & f(\bar{\vx}_{t})-\eta \Norm{\nabla f(\bar{\vx}_{t})} + \frac{\eta^{2} L}{2}+(\frac{2\eta L}{\sqrt{n}}+\frac{32\eta L\tilde\rho}{\sqrt{n}}+\frac{4\eta L\tilde\rho}{\sqrt{n}}+\frac{4\eta L\tilde\rho^2}{\sqrt{n}})\Norm{\mX_t-\frac{1}{n}\boldsymbol{1}\boldsymbol{1}^\top \mX_t}\\
&+(\frac{2\eta}{\sqrt{n}}+\frac{4\eta\tilde\rho}{\sqrt{n}})\Norm{\mV_t-\frac{1}{n}\boldsymbol{1}\boldsymbol{1}^\top\mV_t}+\frac{16\eta\sigma}{(nb)^{1-1/p}}+32\eta^2 L\tilde\rho +\frac{16\sqrt{2}\eta\tilde\rho\sigma}{b^{1-1/p}}+(8\tilde\rho^2 +4\tilde\rho)L\eta^2 \\
\leq & f(\bar{\vx}_{t})-\eta \Norm{\nabla f(\bar{\vx}_{t})} + \frac{\eta^{2} L}{2} +\frac{32\eta L}{\sqrt{n}}\Norm{\mX_t-\frac{1}{n}\boldsymbol{1}\boldsymbol{1}^\top \mX_t}+\frac{4\eta}{\sqrt{n}}\Norm{\mV_t-\frac{1}{n}\boldsymbol{1}\boldsymbol{1}^\top\mV_t}\\
&+\frac{16\eta\sigma}{(nb)^{1-1/p}}+32\eta^2 L\tilde\rho +\frac{16\sqrt{2}\eta\sigma}{(nb)^{1-1/p}}+(8\tilde\rho^2 +4\tilde\rho)L\eta^2\\
\leq &\Psi_t-\eta \Norm{\nabla f(\bar{\vx}_{t})} + \frac{\eta^{2} L}{2} +\frac{32\sqrt{2}\eta\sigma}{(nb)^{1-1/p}}+44\eta^2 L\tilde\rho 
\end{align*}
where the first inequality is based on Lemma \ref{lem:descent-undirected}; the second inequality is based on Lemmas \ref{lem:Estimate descent deviation undirected}, \ref{lem:consensus_x_undirected}, and \ref{lem:consensus_v_undirected};
the third inequality based on the Proposition \ref{prop:Chebyshev acceleration} with $K = \mathcal{O}\big((1-\beta)^{-\frac{1}{2}}{\operatorname{ln}n}\big)$ that lead to
\begin{align}\label{eq:upper-bound-rho_undirected}
\tilde\rho:=c_1\left(1-c_2\sqrt{1-\beta}\,\right)^K \leq \operatorname{min} \left\{\frac{1}{2},\frac{1}{n^{1-1/p}}\right\},
\end{align}
which implies the factors before the terms $\Norm{\mX_t-\frac{1}{n}\boldsymbol{1}\boldsymbol{1}^\top \mX_t}$ and $\Norm{\mV_t-\frac{1}{n}\boldsymbol{1}\boldsymbol{1}^\top\mV_t}$ satisfy
\begin{align*}
\begin{cases}
\dfrac{2\eta L}{\sqrt{n}}+\dfrac{32\eta L\tilde\rho}{\sqrt{n}}+\dfrac{4\eta L(\tilde\rho+\tilde\rho^2)}{\sqrt{n}} \leq \dfrac{32\eta L}{\sqrt{n}}, \\[0.3cm]
\dfrac{2\eta}{\sqrt{n}}+\dfrac{4\eta \tilde\rho}{\sqrt{n}} \leq \dfrac{4\eta}{\sqrt{n}}.
\end{cases}
\end{align*}
and 
\begin{align*}
    \frac{16\sqrt{2}\eta\tilde\rho\sigma}{b^{1-1/p}}\leq \frac{16\sqrt{2}\eta\sigma}{(nb)^{1-1/p}}.
\end{align*}

Taking the average on above result with $t=0,1,\dots,T-1$, we have
\begin{align*}
    \frac{1}{T}\sum _{t=0}^{T-1}\Norm{\nabla f(\bar{\vx}_t)} &\leq \frac{\Delta_\Psi}{T\eta}+\frac{\eta L}{2} +\frac{32\sqrt{2}\sigma}{(nb)^{1-1/p}}+44\eta L\tilde\rho \leq \frac{\Delta_\Psi}{T\eta}+\frac{32\sqrt{2}\sigma}{(nb)^{1-1/p}}+45\eta L
\end{align*}

Now we upper bound the term $\BE[\Psi_0-\Psi_T]$ by the initial function value gap 
\begin{align*}
    \Delta = f(\bar\vx_0) - \inf _{\vx \in \BR^d} f(\vx).
\end{align*}
Specifically, we have
\begin{align*}
\BE[\Psi_0-\Psi_T]=&\BE\left[ f(\bar\vx_0) + 
\frac{4\eta }{\sqrt{n}}\Norm{\mV_{0}-\frac{1}{n}\mathbf{1}\mathbf{1}^{\top} \mV_{0}} +  \frac{32\eta L}{\sqrt{n}} \Norm{\mX_{0}-\frac{1}{n}\mathbf{1}\mathbf{1}^{\top}  \mX_{0}} \right.\\
&-\left.\left(f(\bar\vx_T) + \frac{4\eta}{\sqrt{n}} \Norm{\mV_{T}-\frac{1}{n}\mathbf{1}\mathbf{1}^{\top}\mV_{T}} +  \frac{32\eta L}{\sqrt{n}} \Norm{\mX_{T}-\frac{1}{n}\mathbf{1}\mathbf{1}^{\top}  \mX_{T}}\right)\right]\\
&\leq f(\bar\vx_0) - \inf _{\vx \in \BR^d} f(\vx)+\frac{4\eta}{\sqrt{n}}\Norm{\mV_{0}-\frac{1}{n}\mathbf{1}\mathbf{1}^{\top} \mV_{0}} \\
&= \Delta +\frac{4\eta}{\sqrt{n}}\Norm{\mV_{0}-\frac{1}{n}\mathbf{1}\mathbf{1}^{\top}\mV_{0}}.
\end{align*}
For the last term in above equation, we have
\begin{align*}
        &\BE\left[\Norm{\mV_{0}-\frac{1}{n}\boldsymbol{1}\boldsymbol{1}^{\top} \mV_{0}}\right]\\
    \leq &c_1\left(1-c_2\sqrt{1-\beta}\,\right)^{\hat{K}}\BE\left[\Norm{\mG_0-\frac{1}{n}\boldsymbol{1}\boldsymbol{1}^\top\mG_0}\right]\\
    \leq & c_1\left(1-c_2\sqrt{1-\beta}\,\right)^{\hat{K}}\BE\left[\Norm{\mG_0}\right] \\
    \leq &c_1\left(1-c_2\sqrt{1-\beta}\,\right)^{\hat{K}}(\Norm{\nabla f(\mX_0)} + \BE\left[\Norm{\mG_0-\nabla f(\mX_0)})\right] \\
    \leq &c_1\left(1-c_2\sqrt{1-\beta}\,\right)^{\hat{K}}\left(\sqrt{\sum_{i=0}^n\Norm{\nabla f_i(\bar\vx_0)}^2} + \frac{2\sqrt{2n}\sigma}{b^{1-1/p}}\right) \\
    \leq & \frac{\sqrt{n}\Delta}{4\eta},
\end{align*}

where the first inequality is based on Proposition \ref{prop:Chebyshev acceleration}, the  second inequality is based on Lemma \ref{lem:undirected_norm_ineq}, the third inequality is based on the triangle inequality, the fourth inequality is based on Lemma \ref{lem:heavy-tailed} and the last step is based on the setting 
 \begin{align*}
     \hat{K} \geq \frac{1}{1-\beta}\ln \left(1+\frac{8\eta  \left(\sqrt{\sum_{i=1}^n\Norm{\nabla f_i(\bar\vx_{0})}^2}+2\sqrt{2n}\sigma/{b^{1-1/p}}\right)}{\sqrt{n}\Delta}\right) = \tilde{{\Theta}} \left(\frac{1}{1-\beta}\right).
 \end{align*}

taking 
\begin{align*}
    T\geq \left\lceil\frac{924L\Delta}{\epsilon^2}\right\rceil, \quad \eta=\frac{\epsilon}{270L}, \quad \text{and}  \quad b \geq \left\lceil \frac{1}{n} \left(\frac{288\sigma}{\epsilon}\right)^{\frac{p}{p-1}}\right\rceil,
\end{align*}
we obtain
\begin{align*}
    \BE\left[\frac{1}{T}\sum_{t=0}^{T-1}\Norm{\nabla f(\vw_t)}\right]
\leq \frac{\epsilon}{6} + \frac{\epsilon}{6} + \frac{\epsilon}{6} \leq \frac{\epsilon}{2}.
\end{align*}

\end{proof}


\section{The Comparison with Existing Results for Undirected Networks}\label{appendix:compare}

In a recent work, \citet[Theorem 1]{yu2025decentralized} proposed normalized stochastic gradient descent with momentum and gradient tracking (GT-NSGDm) for decentralized nonconvex stochastic optimization over undirected networks, which guarantees  
\begin{align*}
\frac{1}{n T} \sum_{t=0}^{T-1} \sum_{i=1}^{n} \BE\left[\Norm{\nabla f(\vx_{t,i})}\right]=&\mathcal{O}\left(\frac{\sigma}{n^{\frac{p-1}{p}} T^{\frac{p-1}{3 p-2}}}+\frac{1}{T^{\frac{p-1}{3 p-2}}} \sqrt{\frac{L \Delta}{1-\beta}}+\frac{\Norm{\nabla f(\bar{\vx}_{0})}}{T^{\frac{2 p-2}{3 p-2}}}+\sqrt{\frac{(1+ L) \Delta}{(1-\beta) T}}\right. \\
&~~\quad\left.+\sqrt{\frac{n^{\frac{1}{2}} L \Delta}{(1-\beta)^{2} T}}+\frac{\sigma n^{\frac{1}{2}}}{(1-\beta)^{\frac{1}{p}} T^{\frac{p}{3 p-2}}}+\frac{\Norm{\nabla \mF(\mathbf{1}\bx_{0})}}{(1-\beta) n^{\frac{1}{2}} T^{\frac{p}{3 p-2}}}+\ \frac{\sigma n^{\frac{1}{2}}}{(1-\beta)T^{\frac{2 p-1}{3 p-2}}}+\frac{\Delta}{T}\right),
\end{align*}
where $T$ is the number of iterations.
For each iteration, GT-NSGDm takes the communication complexity of $\fO(1)$ and the sample complexity of $\fO(1)$ per agent.
Therefore, for achieving an $\epsilon$-stationary point, GT-NSGDm requires
the overall sample complexity of
\begin{align*}
& \fO(nT)=\mathcal{O}\left(\frac{\sigma ^{\frac{3p-2}{p-1}}}{n^{\frac{2p-2}{p}}\epsilon^{\frac{3p-2}{p-1}}} + \frac{nL^{\frac{3p-2}{2p-2}}\Delta^{\frac{3p-2}{2p-2}}}{(1-\beta)^{\frac{3p-2}{2p-2}}\epsilon^{\frac{3p-2}{p-1}}} + \frac{n\Norm{\nabla f(\bar{\vx}_0)}^{\frac{3p-2}{2p-2}}}{\epsilon^{\frac{3p-2}{2p-2}}}+\frac{n(1+L)\Delta}{(1-\beta)\epsilon^2} \right.\\
& \qquad\qquad\qquad\left.+\frac{n^\frac{3}{2}L\Delta_0}{(1-\beta)^2\epsilon^2} + \frac{\sigma^{\frac{3p-2}{p}}n^{\frac{5p-2}{2p}}}{(1-\beta)^{\frac{3p-2}{p^2}}\epsilon^{\frac{3p-2}{p}}} + \frac{n\Norm{\nabla \mF(\mathbf{1}\bx_{0})}^{\frac{3p-2}{p}}}{(1-\beta)^{\frac{3p-2}{p}}n^{\frac{3p-2}{2p}}\epsilon^{\frac{3p-2}{p}}} + \frac{n\sigma^{\frac{3p-2}{2p-1}}n^{\frac{3p-2}{4p-2}}}{(1-\beta)^{\frac{3p-2}{2p-1}}\epsilon^{\frac{3p-2}{2p-1}}}+\frac{n\Delta}{\epsilon}\right).
\end{align*}
and  the communication complexity of
\begin{align*}
& \fO(T)=\mathcal{O}\left(   \frac{\sigma ^{\frac{3p-2}{p-1}}}{n^{\frac{3p-2}{p}}\epsilon^{\frac{3p-2}{p-1}}} + \frac{L^{\frac{3p-2}{2p-2}}\Delta^{\frac{3p-2}{2p-2}}}{(1-\beta)^{\frac{3p-2}{2p-2}}\epsilon^{\frac{3p-2}{p-1}}} + \frac{\Norm{\nabla f(\bar{\vx}_0)}^{\frac{3p-2}{2p-2}}}{\epsilon^{\frac{3p-2}{2p-2}}}+\frac{(1+L)\Delta}{(1-\beta)\epsilon^2} \right.\\
& \qquad\qquad\qquad\left.+\frac{n^\frac{1}{2}L\Delta_0}{(1-\beta)^2\epsilon^2}+\frac{\sigma^{\frac{3p-2}{p}}n^{\frac{3p-2}{2p}}}{(1-\beta)^{\frac{3p-2}{p^2}}\epsilon^{\frac{3p-2}{p}}}+\frac{\Norm{\nabla \mF(\mathbf{1}\bx_{0})}^{\frac{3p-2}{p}}}{(1-\beta)^{\frac{3p-2}{p}}n^{\frac{3p-2}{2p}}\epsilon^{\frac{3p-2}{p}}}+\frac{\sigma^{\frac{3p-2}{2p-1}}n^{\frac{3p-2}{4p-2}}}{(1-\beta)^{\frac{3p-2}{2p-1}}\epsilon^{\frac{3p-2}{2p-1}}}+\frac{\Delta}{\epsilon}\right).
\end{align*}
Note that the above overall sample complexity is optimal to the accuracy $\epsilon$, while the optimality does not hold for other parameters and the communication.

\section{Experiments on Undirected Networks}\label{appendix:exp-undirected}

Following the setting in Section \ref{sec:experiments}, we also conduct the experiments on training a 6-layer Transformer-XL network, but consider the topologies of undirected networks.
We compare the performance of decentralized normalized stochastic gradient descent (DNSGD) (Algorithm \ref{alg:DNSGD}) \citep{luo2025decentralized}, decentralized stochastic gradient descent  with gradient tracking (D-SGT) \cite{nedic2009distributed,qu2017harnessing}, and gradient tracking normalized stochastic gradient descent with momentum (GTNSGDm) \citep{yu2025decentralized}

We perform our experiments on the networks associated with the undirected ring graph ($n=16$ and $\beta = 0.939$) and the Erd\H{o}s--R\'{e}nyi graph with the connectivity probability of 0.5 ($n=8$ and $\beta=0.839$) to evaluate the sample complexity and the communication complexity.
We set the number of consensus steps as $K \in \{2,3,4\}$ for the undirected ring graph and $K \in \{1,2,3\}$ for the undirected Erd\H{o}s--R\'{e}nyi graph.
For all algorithms, we tune the step size from $\{0.005,0.01,0.02,0.05,0.1,0.2,0.3,0.4,0.5\}$ and fix the minibatch size as 20. 
We additionally run DNSGD with $n\in\{1,2,4,8\}$ to evaluate its speedup with respect to the number of agents.

Figures~\ref{fig:undirected-ring}(a),  \ref{fig:undirected-ring}(b), \ref{fig:undirected-er}(a), and \ref{fig:undirected-er}(b) demonstrate that  DNSGD significantly outperforms the other methods. 
These results validate our theoretical analysis, showing that the normalization step effectively mitigates the impact of heavy-tailed gradient noise.
Moreover, as shown in Figures~\ref{fig:undirected-ring}(a) and \ref{fig:undirected-ring}(b), increasing $K$ from 2 to 3 does improve the performance of DNSGD, whereas further increasing $K$ from 3 to 4 yields no significant gains. 
A similar trend is also observed on the Erd\H{o}s--R\'{e}nyi graph in Figures~\ref{fig:undirected-er}(a) and \ref{fig:undirected-er}(b).
Additionally, Figures~\ref{fig:undirected-ring}(c) and \ref{fig:undirected-er}(c) demonstrate our DNSGD enjoys the linear speedup with respect to the number of agents $n$, matching our theoretical analysis in Section \ref{sec:undirected}.

\newpage

\begin{figure}[t]
  \centering
  \begin{tabular}{c@{\hspace{2mm}}c@{\hspace{2mm}}c}
    \includegraphics[width=0.31\columnwidth]{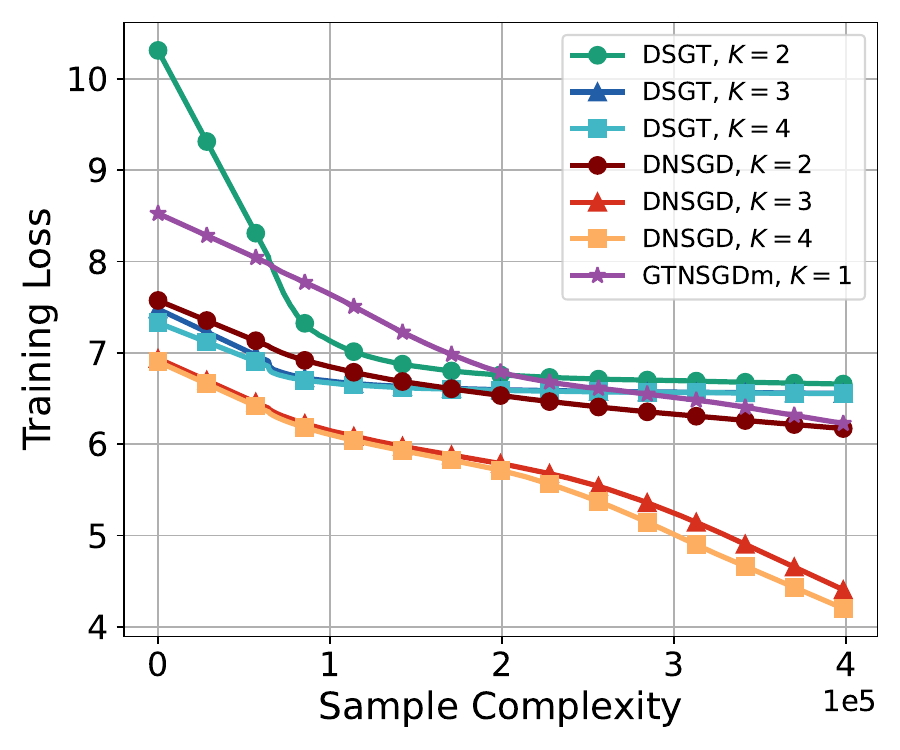} &
    \includegraphics[width=0.31\columnwidth]{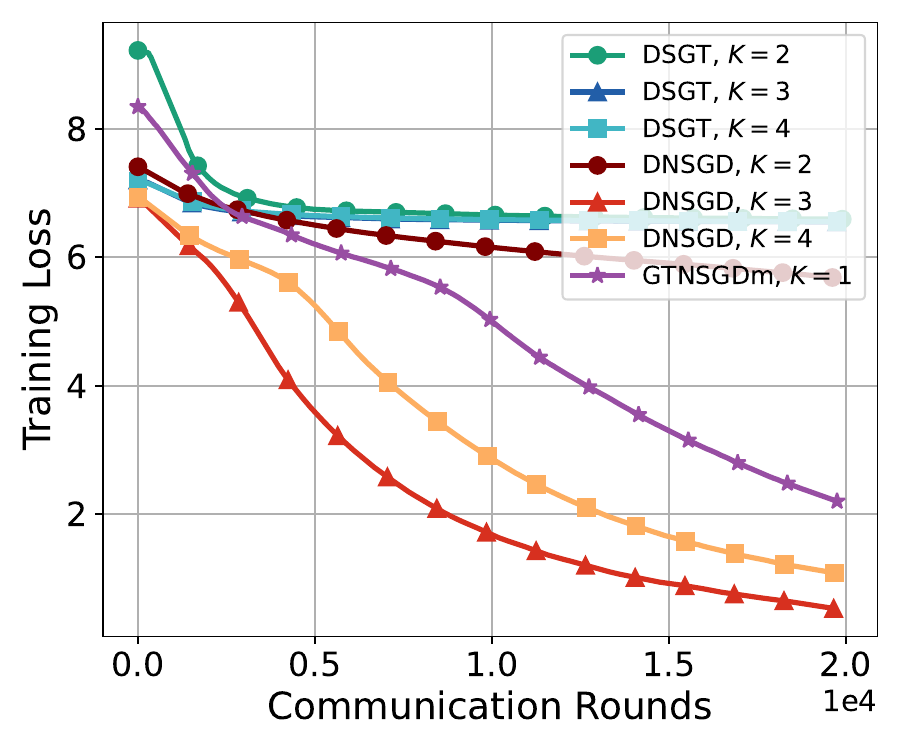} &
    \includegraphics[width=0.31\columnwidth]{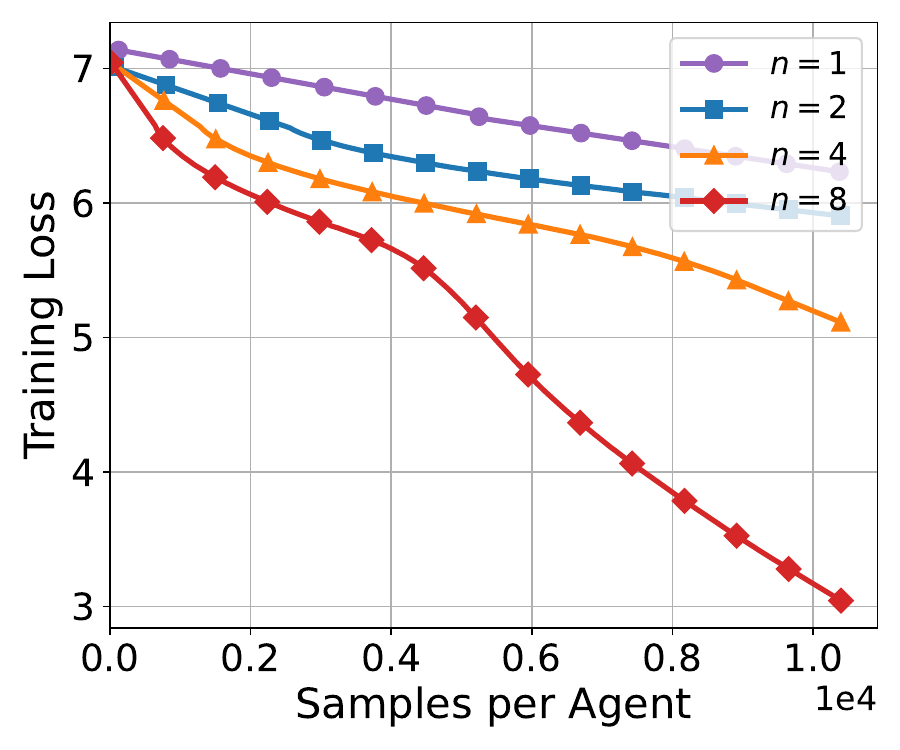} \\
    \small (a) \#sample vs.\ training loss &
    \small (b) \#communication vs.\ training loss &
    \small (c) speedup with respect to $n$
  \end{tabular}
  \caption{The comparison of DNSGD, DSGT, and GTNSGDm on the undirected ring network, 
  where subfigures (a) and (b) set $n=16$ and $K\in\{2,3,4\}$, 
  and subfigure (c) sets $n\in\{1,2,4,8\}$ and $K=2$.}
  \label{fig:undirected-ring}
  \vskip -0.1cm
\end{figure}

\begin{figure}[t]
  \centering
  \begin{tabular}{c@{\hspace{2mm}}c@{\hspace{2mm}}c}
    \includegraphics[width=0.31\columnwidth]{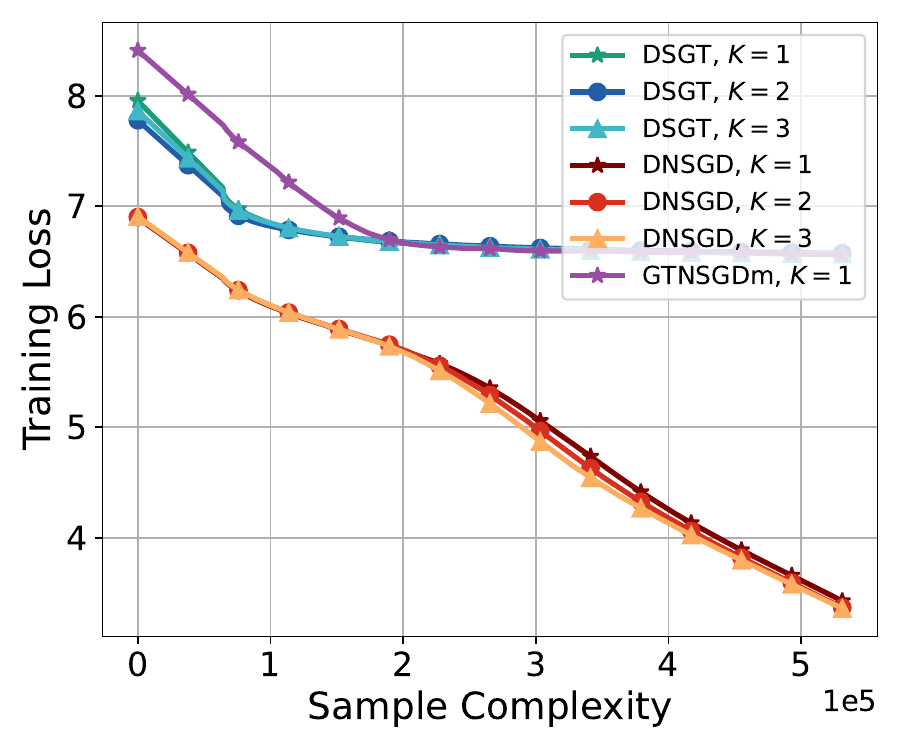} &
    \includegraphics[width=0.31\columnwidth]{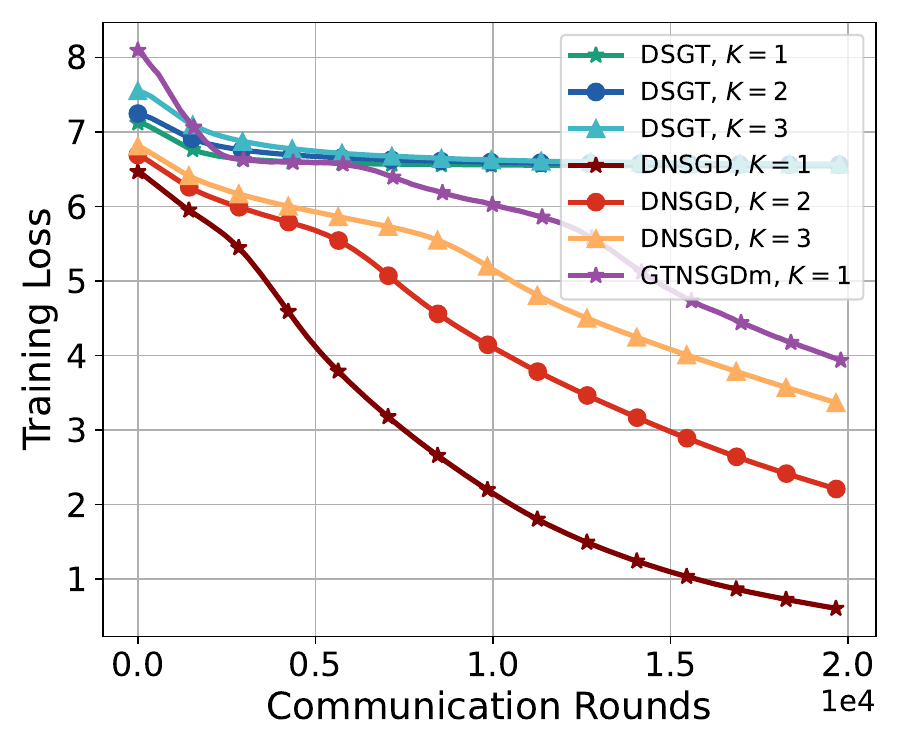} &
    \includegraphics[width=0.31\columnwidth]{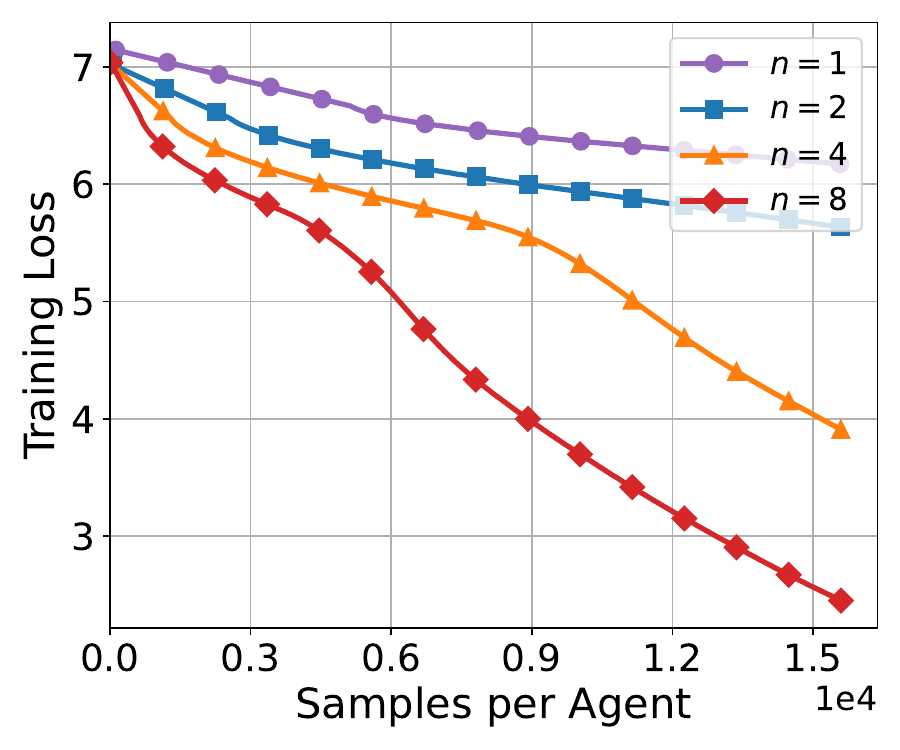} \\
    \small (a) \#sample vs.\ training loss &
    \small (b) \#communication vs.\ training loss &
    \small (c) speedup with respect to $n$
  \end{tabular}
  \caption{The comparison of DNSGDm, DSGT, and GTNSGDm on the undirected Erd\H{o}s--R\'{e}nyi network, 
  where subfigures (a) and (b) set $n=8$ and $K\in\{1,2,3\}$, 
  and subfigure (c) sets $n\in\{1,2,4,8\}$ and $K=1$.}
  \label{fig:undirected-er}
  \vskip -0.1cm
\end{figure}

\end{document}